\newcommand{\R}{\mathscr{R}}
\newtheorem{theorem}{Theorem}[section]
\newtheorem{lemma}{Lemma}[section]
\newtheorem{remark}{Remark}[section]
\newtheorem{proposition}{Proposition}[section]
\newtheorem{definition}{Definition}[section]
\begin{document}
\title[Double-bosonization and Majid's Conjecture, (III)]{Double-bosonization and Majid's Conjecture, (III):
 type-crossing and inductions of $E_6$ and $E_7$, $E_8$}
\author[H. Hu]{Hongmei Hu}
\address{Department of Mathematics,  Shanghai Key Laboratory of Pure Mathematics and Mathematical Practice,
East China Normal University,
Minhang Campus,
Dong Chuan Road 500,
Shanghai 200241,
PR China}
\email{hmhu0124@126.com}

\author[N. Hu]{Naihong Hu$^\ast$}
\address{Department of Mathematics,  Shanghai Key Laboratory of Pure Mathematics and Mathematical Practice,
East China Normal University,
Minhang Campus,
Dong Chuan
Road 500,
Shanghai 200241,
PR China}
\email{nhhu@math.ecnu.edu.cn}

\thanks{$^\ast$N.~H., supported by the NSFC (Grant No.
 11271131).}

\date{}
\maketitle

\newcommand*{\abstractb}[3]{ %
                             \begingroup%
                             \leftskip=8mm \rightskip=8mm%设置边距
                             \fontsize{11pt}{\baselineskip}\noindent{\textbf{Abstract} ~}{#1}\\ %摘要
                             {\textbf{Keywords} ~}{#2}\\ %关键词
                             {\textbf{MR(2010) Subject Classification} ~}{#3}\\ %主题分类
                                                          \endgroup
                           }
\begin{abstract}
Double-bosonization construction in Majid \cite{majid1}
is expectedly allowed to generate a tree of quantum groups. Some main branches of the tree in \cite{HH1, HH2} have been depicted how to grow up.
This paper continues to elucidate the type-crossing and inductive constructions of exceptional quantum groups of types $E_6$ and $E_7$, $E_8$, respectively,
based on the generalized double-bosonization Theorem established in \cite{HH2}.
Thus the Majid's expectation for the inductive constructions of $U_q(\mathfrak g)$'s for all finite-dimensional complex simple Lie algebras is completely achieved.
%Moreover, contrasting to the quantum shuffle theory introduced by Rosso in \cite{rosso},
%both theories have the same application for the inductive construction of $U_q(\mathfrak g)$ at least for all complex semisimple Lie algebras.
\end{abstract}
%%%%%%%%%%%%%%%%%%%%%%%%%%%%%%
%\end{format}      %摘要格式结束
%%%%%%%%%%%%%%%%%%%%%%%%%%%%%%%%%%%%%%%%%%%%%%%%%%%%%%%%%%%%%%%%%%%%%%%%%%%%%%%%%%%%%%%%%%%%%
%%%%%%%%%%%%%%%%%%%%%%%%%%%%%         摘要填充         %%%%%%%%%%%%%%%%%%%%%%%%%%%%%%%%%%%%%%
%%%%%%%%%%%%%%%%%%%%%%%%%%%%%%%%%%%%%%%%%%%%%%%%%%%%%%%%%%%%%%%%%%%%%%%%%%%%%%%%%%%%%%%%%%%%%
%%%%%%%%%%%%%%%%%%%%%%%%%%%%%         正文填充         %%%%%%%%%%%%%%%%%%%%%%%%%%%%%%%%%%%%%%
%%%%%%%%%%%%%%%%%%%%%%%%%%%%%%%%%%%%%%%%%%%%%%%%%%%%%%%%%%%%%%%%%%%%%%%%%%%%%%%%%%%%%%%%%%%%%
%\begin{format}%正文格式开始
%%%%%%%%%%%%%%%%%%%%%%%%%%%%%         开始工作         %%%%%%%%%%%%%%%%%%%%%%%%%%%%%%%%%%%%%%%%%%%%%%%%%%%%%%%%%%%%%

\section{Introduction and our results}
A striking feature of quantum group theory is in close connection with many branches of mathematics and physics,
such as Lie groups, Lie algebras and their representations,
Hecke algebras and their representation theory,
quantum invariants theory of knots or links and $3$-manifolds, as well as the current hot studies on monoidal categories and various categorifications, etc.
So quantum group theory always attracts many mathematicians to find some better ways in a suitable framework to construct their structures defined initially by generators and relations.
In the early nineties,
Ringel \cite{ringel} realized the positive part of quantum groups by quiver representations and Hall algebras,
which inspired Lusztig's canonical base theory \cite{lus1,lus2}.
Bridgeland \cite{Bri} realized the entire quantum groups of type $ADE$ by the Ringel-Hall algebras.
In \cite{rosso},
Rosso also realized the positive part of $U_q(\mathfrak g)$ by the quantum shuffle in a braided category,
and gave a recipe on inductive constructions in the sense of quantum shuffle.
The axiomatic construction for the entire quantum groups for the standard Drinfeld-Jimbo types (cf. Fang-Rosso in \cite{fang})
has been nontrivially generalized to the multi-parameter setting by Hu-Li-Rosso \cite{hlr}.
On the other hand,
Majid \cite{majid1} (also Sommerh\"auser in \cite{somm}) constructed $U_q(\mathfrak g)$ by establishing the double-bosonization theory in a braided category of $H$-modules (resp. in a larger braided category, i.e., the category of Yetter-Drinfeld $H$-modules).
In this paper,
we mainly focus on the double-bosonization method in \cite{majid1}.

Associated with any mutually dual braided groups $B^{\star},B$ covariant under a
background quasitriangular Hopf algebra $H$,
there is a new quantum group structure on the tensor space $B^{\star}\otimes H\otimes B$ by double-bosonization in \cite{majid1},
consisting of $H$ extended by $B$ as additional `positive roots'
and its dual $B^{\star}$ as additional `negative roots'.
The construction is more direct than the quantum double because one can reach to $U_q(\mathfrak g)$
(rather than to take a quotient from the quantum double).
Specially,
Majid regarded $U_{q}(\mathfrak n^{\pm})$ as the mutually dual braided groups in the braided category of left $H$-modules for `Cartan subalgebra' $H$,
then recovered $U_q(\mathfrak g)$ by double-bosonization.
On the other hand, Majid claimed that the double-bosonization construction allowed to be used to generate a tree of quantum groups. That is, many new quantum groups and the inductive constructions of $U_q(\mathfrak g)$'s for all complex simple Lie algebras $\mathfrak g$ were supposed to be obtained by this theory in \cite{majid1}.
He thought that at each node of the tree, there are many choices to adjoin a pair of certain braided groups covariant under the quantum group at that node.
This is a representation-theoretical challenge to elaborate the full tree structure. After a few examples given by Majid himself \cite{majid1, majid6} twenty years ago, recently, 
almost main branches have been depicted in \cite{HH1, HH2, HH3}.
The remaining situation left is the $E$-series. This consists of the aim of this paper.

Nowadays,
Majid's framework on braided groups in certain braided categories has been developed into the framework on Nichols algebras in the Yetter-Drinfeld categories for the purpose to classify finite-dimensional pointed Hopf algebras; see the works \cite{AS1, AS2} of Andruskiewitsch and Schneider,  and the one \cite{HS} of Heckenberger and Schneider.
Hence, we believe in some sense that it is significant to study what kinds of new finite-dimensional Hopf algebras can be found via the
double-bosonization procedure.
An interesting application of our constructions might be connected to a recent work of Cuntz and Lentner on Nichols algebras (also see the concluding remarks in \cite{G} for further interesting considerations).

The paper is organized as follows.
In section 2, we recall some basic facts about FRT-bialgebras associated with $R$-matrices in \cite{FRT1},
Majid's double bosonization theorem in \cite{majid1} and the generalized double-bosonization construction theorem established in \cite{HH2} suitable for those irregular $R$-matrices we encountered when we treat the exceptional cases (beyond the considerations in \cite{majid1}).
In view of some results and methods in \cite{HH1,HH2},
we construct $U_{q}(E_{6})$ by working on $U_{q}(D_{5})$ and its one of the half-spin representations,
$U_{q}(E_{7})$, $U_{q}(E_{8})$ can be constructed from $U_{q}(E_{6})$, $U_{q}(E_{7})$ and their minimal dimensional fundamental representations in section 3.
These representations are given in diagrams (see Figures 1 to 3).
In the last section,
we give a proposition to summarize our inductive constructions of $U_q(\mathfrak g)$'s for all complex finite-dimensional simple Lie algebras,
and also reflect these constructions on the Dynkin diagrams.
With these results, we see that both the double-bosonization theory and the quantum shuffle theory (due to Rosso \cite{rosso}) yield the same tree structure of $U_q(\mathfrak g)$'s for all complex semisimple Lie algebras.

\section{Preliminaries}
In this paper, let $k$ be complex field,
$\mathbb{R}$ real field,
$E$ the Euclidean space $\mathbb{R}^{n}$ or its a suitable subspace.
$\varepsilon_{i}$ denotes the usual orthogonal unit vectors in $\mathbb{R}^{n}$.
$\mathfrak g$ is a finite-dimensional complex semisimple Lie algebra with simple roots $\alpha_{i}$.
$\lambda_{i}$ is the fundamental weight corresponding to simple root $\alpha_{i}$.
Cartan matrix of $\mathfrak g$ is $(a_{ij})$,
where $a_{ij}=\frac{2(\alpha_{i},\alpha_{j})}{(\alpha_{i},\alpha_{i})}$,
and $d_{i}=\frac{(\alpha_{i},\alpha_{i})}{2}$.
Let $(H,\R)$ be a quasitriangular Hopf algebra,
where $\R$ is called the universal $R$-matrix,
$\R=\R^{(1)}\otimes \R^{(2)}$,
$\R_{21}=\R^{(2)}\otimes \R^{(1)}$,
we denote by $\Delta,\eta,\epsilon$ its coproduct,
counit,
unit,
and by $S$ its antipode.

We shall use Sweedler's notation:
for $h \in H$,
$\Delta(h)=h_{1}\otimes h_{2}$.
$H^{\text{op}} \ (H^{\text{cop}})$ denotes the opposite (co)algebra structure of $H$, respectively.
$\mathfrak{M}_{H} \ ({}_{H}\mathfrak{M}$) denotes the braided category consisting of right (left) $H$-modules, respectively.
If there is a coquasitriangular Hopf algebra $A$ such that dual pair $(H,A)$ is a weakly quasitriangular,
then $\mathfrak{M}_{H} \ ({}_{H}\mathfrak{M}$) is equivalent to the braided category ${}^{A}\mathfrak{M} \ (\mathfrak{M}^{A})$ of left (right) $A$-comodules, respectively.
For the detailed description of these theories,
we left to the readers to refer to Drinfeld's and Majid's papers \cite{dri}, \cite{majid2}, \cite{majid3}, and so on.
Braided group is a braided bialgebra or Hopf algebra in some braided category,
in order to distinguish from the ordinary Hopf algebras, let
$\underline{\Delta},\, \underline{S}$ denote its coproduct and antipode, respectively.

\subsection{Majid's double-bosonization theory}
Let $C, B$ be a pair of braided groups in $\mathfrak{M}_{H}$, which
are called dually paired if there is an intertwiner
$\text{ev}: C\otimes B \longrightarrow k$ such that
$$
\text{ev}(cd,b)=\text{ev}(d,b_{\underline{(1)}})\text{ev}(c,b_{\underline{(2)}}), \quad
\text{ev}(c,ab)=\text{ev}(c_{\underline{(2)}},a)\text{ev}(c_{\underline{(1)}},b),\quad\forall a,b\in B,c,d\in C.
$$
Then $C^{\text{op}/\text{cop}}$(with opposite product and coproduct) is a Hopf algebra in $_{H}\mathfrak{M}$,
which is dual to $B$ in the sense of an ordinary duality pairing $\langle~,~\rangle$,
which is $H$-bicovariant: $\langle h\rhd c,b\rangle=\langle c, b\lhd h\rangle,$ for all $h\in H$.
Let $\overline{C}=(C^{\text{op}/\text{cop}})^{\underline{\text{cop}}}$,
then $\overline{C}$ is a braided group in $_{\overline{H}}\mathfrak{M}$,
where $\overline{H}$ is $(H,\R_{21}^{-1})$.
With these,
Majid gave the following double bosonization theorem and some results in \cite{majid1}:
\begin{theorem}\label{ml1}$(${\rm\bf Majid}$)$
On the tensor space $\bar{C}\otimes H \otimes B$,
there is a unique Hopf algebra structure $U=U(\bar{C},H,B)$ such that
$H\ltimes B$ and $\bar{C}\rtimes H$ are sub-Hopf algebras by the canonical inclusions and
$$
\left.
\begin{array}{rl}
bc=&(\R_{1}^{(2)}\rhd c_{\overline{(2)}})
\R_{2}^{(2)}\R_{1}^{-(1)}(b_{\underline{(2)}}\lhd\R_{2}^{-(1)})\,
\langle\R_{1}^{(1)}\rhd c_{\overline{(1)}},b_{\underline{(1)}}\lhd
\R_{2}^{(1)}\rangle\,\cdot\\
&\qquad\qquad\quad\cdot\,\langle\R_{1}^{-(2)}\rhd \overline{S}c_{\overline{(3)}},b_{\underline{(3)}}\lhd \R_{2}^{-(2)}\rangle,
\end{array}
\right.
$$
for all $ b\in B, \,c\in\overline{C}$ viewed in $U$.
Here $\R_{1},\R_{2}$
are distinct copies of the quasitriangular structure $\R$ of $H$.
The product, coproduct of $U$ are given by
$$
\left.
\begin{array}{rl}
(c\otimes h\otimes b)\cdot(d\otimes g\otimes a)=&c(h_{(1)}\R_{1}^{(2)}\rhd d_{\overline{(2)}})
\otimes h_{(2)}\R_{2}^{(2)}\R_{1}^{-(1)}g_{(1)}\otimes (b_{\underline{(2)}}\lhd\R_{2}^{-(1)}g_{(2)})a\\
&\langle\R_{1}^{(1)}\rhd d_{\overline{(1)}},b_{\underline{(1)}}\lhd\R_{2}^{(1)}\rangle
\langle\R_{1}^{-(2)}\rhd\overline{S}d_{\overline{(3)}},b_{\underline{(3)}}\lhd\R_{2}^{-(2)}\rangle;
\end{array}
\right.
$$
$$
\Delta(c\otimes h\otimes b)=c_{\overline{(1)}}\otimes \R^{-(1)}h_{(1)}\otimes b_{\underline{(1)}}\lhd\R^{(1)}
\otimes \R^{-(2)}\rhd c_{\overline{(2)}}\otimes h_{(2)}\R^{(2)}\otimes b_{\underline{(2)}}.
$$
\end{theorem}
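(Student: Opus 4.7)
The plan is to realize $U(\bar{C},H,B)$ as a suitable generalized quantum double (double cross-product) of the two bosonizations $H\ltimes B$ and $\bar{C}\rtimes H$, after which one descends along the redundant copy of $H$. The starting point is that, by Majid's ordinary bosonization theorem, $H\ltimes B$ is an ordinary Hopf algebra whenever $B$ is a braided group in ${}_H\mathfrak{M}$, and dually, $\bar{C}\rtimes H$ is an ordinary Hopf algebra containing $H$. Both already contain $H$ canonically, so the entire content of Theorem~\ref{ml1} is to glue them along this common $H$ and to compute the resulting cross-relation between $B$ and $\bar{C}$.

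The first main step is to construct a skew-Hopf pairing
\[
\sigma:(\bar{C}\rtimes H)\otimes(H\ltimes B)\longrightarrow k
\]
that combines the duality pairing $\langle\,,\,\rangle$ on $\bar{C}\otimes B$ with the self-pairing of $H$ furnished by the quasitriangular structure $\R$. The $H$-bicovariance hypothesis $\langle h\rhd c,b\rangle=\langle c,b\lhd h\rangle$, together with the compensating role of $\R_{21}^{-1}$ in the transition from ${}_H\mathfrak{M}$ to ${}_{\bar H}\mathfrak{M}$ used to define $\bar{C}$, is precisely what is needed to make $\sigma$ compatible with the bosonized (co)products on both sides. Once $\sigma$ is in place, the general theory of double cross-products equips the tensor space $(\bar{C}\rtimes H)\otimes(H\ltimes B)$ with a Hopf algebra structure in which the two factors embed as sub-Hopf algebras and cross-commute via relations determined by $\sigma$ and $\sigma^{-1}$.

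The second step is to pass to the quotient identifying the two internal copies of $H$; the quasitriangular axioms $(\Delta\otimes\mathrm{id})\R=\R_{13}\R_{23}$ and $(\mathrm{id}\otimes\Delta)\R=\R_{13}\R_{12}$ guarantee that the appropriate ideal is a Hopf ideal, and the resulting quotient has underlying space $\bar{C}\otimes H\otimes B$. Unpacking the cross-relation between $b\in B$ and $c\in\bar{C}$ inside this quotient, using the braided coproducts $\underline{\Delta}$ of $B$ and $\bar{C}$ together with the explicit form of $\sigma$, reproduces the stated formula for $bc$; the displayed product and coproduct of $U$ then follow. Uniqueness is standard: once $H\ltimes B$ and $\bar{C}\rtimes H$ are prescribed as sub-Hopf algebras whose product spans the tensor space, any Hopf algebra structure on $\bar{C}\otimes H\otimes B$ is determined by its cross-relation between $B$ and $\bar{C}$.

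The main obstacle is the verification that $\sigma$ is a genuine Hopf pairing. This step simultaneously tracks the braided coproducts of $B$ and $\bar{C}$, the left and right $H$-(co)actions, and multiple copies of $\R,\R^{-1}$, and it is precisely where $H$-bicovariance of $\langle\,,\,\rangle$ is critically used. The subsequent extraction of the explicit formula for $bc$ from $\sigma$ is combinatorially dense but essentially mechanical once this framework is set up.
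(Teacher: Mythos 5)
This statement is quoted verbatim from Majid's paper \cite{majid1} and the present paper supplies no proof of it at all --- it is imported as background (``Majid gave the following double bosonization theorem\dots''), so there is no in-paper argument to compare yours against. Measured instead against Majid's original proof, your outline follows essentially the same route: Majid does construct $U(\bar C,H,B)$ as (a quotient of) a generalised quantum double $(\bar C\rtimes H)\bowtie(H\ltimes B)$ built on a skew pairing that combines $\langle\,,\,\rangle$ with the self-pairing of $H$ given by $\R$, and the identification of the two copies of $H$ is exactly the step that produces the underlying space $\bar C\otimes H\otimes B$ and the displayed cross-relation. Your sketch correctly isolates the two genuinely hard points --- verifying that $\sigma$ is a skew-Hopf pairing compatible with the bosonized structures (this is where $H$-bicovariance and the passage to $\bar H=(H,\R_{21}^{-1})$ enter), and verifying that the identification ideal is a Hopf ideal --- but it defers both rather than carrying them out, so as written it is a correct strategy rather than a complete proof. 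Within the context of this paper, nothing more is required, since the authors themselves take the theorem as given.
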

\begin{remark}\label{Cross}
If there exists a coquasitriangular Hopf algebra $A$ such that $(H,A)$ is a weakly quasitriangular dual pair,
and
$b,
c$
are primitive elements,
then some relations simplify to
$$
[b,c]=\R(b^{\overline{(1)}})\langle c,b^{\overline{(2)}}\rangle-
\langle c^{\overline{(1)}},b\rangle\bar{\R}(c^{\overline{(2)}});%\eqno{(C2)}
$$
$$
\Delta b=b^{\overline{(2)}}\otimes \R(b^{\overline{(1)}})+1\otimes b,\quad
\Delta c=c\otimes 1+\bar{\R}(c^{\overline{(2)}})
\otimes c^{\overline{(1)}}.%\eqno{(C3)}
$$
\end{remark}

\subsection{Generalized double-bosonization construction theorem}
Let $R$ be an invertible matrix obeying the quantum Yang-Baxter equation.
There is a bialgebra $A(R)$ \cite{FRT1} corresponding to the $R$-matrix,
called an $FRT$-bialgebra.
\begin{definition}
$A(R)$ is generated by $1$ and $t=\{t^{i}_{j}\}$,
having the following structure:
$$R^{i}_{a}{}^{j}_{b}t^{b}_{k}t^{a}_{l}=t^{i}_{a}t^{j}_{b~}R^{a}_{l}{}^{b}_{k},\quad
~~\Delta(t^{i}_{j})=t^{i}_{a}\otimes t^{a}_{j},\quad
\epsilon(t^{i}_{j})=\delta^{i}_{j}.$$
$A(R)$ is a coquasitriangular bialgebra with
$\R:\, A(R)\otimes A(R) \longrightarrow k$ such that $\R(t^{i}_{j}\otimes t^{k}_{l})=R^{ik}_{jl}$.
Here
$T_{1}=T\otimes \text{I}$,
$T_{2}=\text{I}\otimes T$, $R^{ik}_{jl}$ denotes the entry at row $(ik)$ and column $(jl)$ in $R$.
\end{definition}
By double cross product of bialgebra  $A(R)$,
Majid obtained bialgebra $\widetilde{U(R)}$ in \cite{majid5},
which is generated by $m^{\pm}$ and
satisfies
\begin{gather*}
Rm^{\pm}_{1}m^{\pm}_{2}=m^{\pm}_{2}m^{\pm}_{1}R,
\quad
Rm_{1}^{+}m_{2}^{-}=m_{2}^{-}m_{1}^{+}R,
\\
\Delta((m^{\pm})^{i}_{j})=(m^{\pm})_{j}^{a}\otimes (m^{\pm})_{a}^{i},
\quad
\epsilon((m^{\pm})^{i}_{j})=\delta_{ij},
\end{gather*}
where
$(m^{\pm})^{i}_{j}$ denotes the entry at row $i$ and column $j$ in $m^{\pm}$.
Moreover,
$(\widetilde{U(R)},A(R))$ is a weakly quasitriangular dual pair of bialgebras.
In braided category ${}^{A(R)}\mathfrak{M} \ (\mathfrak{M}^{A{(R)}})$,
there are two classical braided groups $V(R',R)$, $V^{\vee}(R',R_{21}^{-1})$ in \cite{majid4},
called braided (co)vector algebras, respectively.
\begin{proposition}\label{prop1}
Suppose that $R^{\prime}$ is another matrix such that
$
(i)~R_{12}R_{13}R^{\prime}_{23}=R^{\prime}_{23}R_{13}R_{12},
$
$
R_{23}R_{13}R^{\prime}_{12}=R^{\prime}_{12}R_{13}R_{23},
$
$(ii)~(PR+1)(PR^{\prime}-1)=0$,
$(iii)~R_{21}R^{\prime}_{12}=R^{\prime}_{21}R_{12}$,
where $P$ is a permutation matrix with the entry $P^{ij}_{kl}=\delta_{il}\delta_{jk}$.
Then the braided-vectors algebra $V(R',R)$ defined by generators $1$,
$\{e^{i}~|~i=1,\cdots,n\}$,
 and relations
$e^{i}e^{j}=\sum\limits_{a,b} R'{}^{ji}_{ab}e^{a}e^{b}$
forms a braided group with
$\underline{\Delta}(e^{i})=e^{i}\otimes 1+1\otimes e^{i},
\underline{\epsilon}(e^{i})=0,
\underline{S}(e^{i})=-e^{i},
\Psi(e^{i}\otimes e^{j})=\sum\limits_{a,b}R^{ji}_{ab}e^{a}\otimes e^{b}$
in braided category ${}^{A(R)}\mathfrak{M}$.
Under the duality $\langle f_{j},e^{i}\rangle=\delta_{ij}$,
braided-covectors algebra $V^{\vee}(R',R_{21}^{-1})$ defined by $1$ and $\{f_{j}~|~j=1,\cdots,n\}$,
and relations
$f_{i}f_{j}=\sum\limits_{a,b}f_{b}f_{a}R'{}^{ab}_{ij}$
forms another braided group with
$\underline{\Delta}(f_{i})=f_{i}\otimes 1+1\otimes f_{i},$
$\underline{\epsilon}(f_{i})=0,$
$\underline{S}(f_{i})=-f_{i},$
$\Psi(f_{i}\otimes f_{j})=\sum\limits_{a,b}f_{b}\otimes f_{a}R^{ab}_{ij}$
in braided category $\mathfrak{M}^{A(R)}$.
\end{proposition}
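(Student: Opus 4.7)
The strategy is to verify, piece by piece, that each component of the claimed braided group structure descends from the free tensor algebra $T(V)$ to the quotient $V(R',R) = T(V)/\langle e^i e^j - R'{}^{ji}_{ab}\, e^a e^b\rangle$, and symmetrically for $V^\vee(R', R_{21}^{-1})$; the three hypotheses (i)--(iii) should be exactly what these compatibility checks demand.

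First I would promote $V(R',R)$ to an algebra object in ${}^{A(R)}\mathfrak{M}$. The tensor algebra $T(V)$ carries a left $A(R)$-coaction extending $\delta(e^i) = t^i_a \otimes e^a$ as an algebra map, and the induced braiding from the coquasitriangular form $\R$ on $A(R)$ computes on generators as $\Psi(e^i \otimes e^j) = R^{ji}_{ab}\, e^a \otimes e^b$, matching the stated formula. To descend $\delta$ to the quotient, apply it to both sides of a defining relation $e^i e^j = R'{}^{ji}_{ab} e^a e^b$: the resulting equality in $A(R) \otimes V$ reduces (after one invocation of the FRT relations defining $A(R)$) to a tensor identity which, read off on the basis $\{e^k \otimes e^l \otimes e^m\}$ of $V^{\otimes 3}$, is precisely the first part of hypothesis (i), namely $R_{12}R_{13}R'_{23} = R'_{23}R_{13}R_{12}$. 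Hence the relation ideal is an $A(R)$-subcomodule, and $V(R',R)$ is an algebra in ${}^{A(R)}\mathfrak{M}$.

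Next, I would define $\underline{\Delta}$, $\underline{\epsilon}$, $\underline{S}$ on generators as stated and extend $\underline{\Delta}$ multiplicatively into the braided tensor product $V \underline{\otimes} V$, whose product is $(u\otimes v)(u' \otimes v') = u\,\Psi(v\otimes u')\,v'$. Expanding
\begin{equation*}
\underline{\Delta}(e^i)\,\underline{\Delta}(e^j)\; -\; R'{}^{ji}_{cd}\,\underline{\Delta}(e^c)\,\underline{\Delta}(e^d)
\end{equation*}
as four summands in $V \underline{\otimes} V$, the two ``outer'' pieces ($\cdot \otimes 1$ and $1\otimes \cdot$) cancel directly upon reducing inside each tensor factor by the defining relation of $V$. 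The two ``cross'' pieces both pass through $\Psi$, and regrouping shows that their cancellation is equivalent to the operator identity $(1 + PR)(1 - PR') = 0$ on $V\otimes V$, which is exactly hypothesis (ii). The counit $\underline{\epsilon}$ descends trivially, and since $V$ is generated by primitives, the braided antipode exists and is determined on generators by $\underline{S}(e^i) = -e^i$.

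The same blueprint produces the dual braided group. $V^\vee(R', R_{21}^{-1})$ is built in $\mathfrak{M}^{A(R)}$ with right coaction $\delta(f_j) = f_a \otimes t^a_j$; well-definedness of the coaction on the relations $f_i f_j = f_b f_a R'{}^{ab}_{ij}$ invokes the second half of hypothesis (i), $R_{23}R_{13}R'_{12} = R'_{12}R_{13}R_{23}$, and the braided-coproduct check again reduces to the Hecke-type identity (ii). Finally, compatibility of the pairing $\langle f_j, e^i \rangle = \delta_{ji}$ with both bialgebra structures requires that reducing $\langle f_i f_j, e^k e^l \rangle$ via the $V$-relations versus via the $V^\vee$-relations gives the same answer; a short manipulation shows this happens precisely when $R_{21}R'_{12} = R'_{21}R_{12}$, which is hypothesis (iii). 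The chief obstacle in the whole argument is the braided-coproduct step of the previous paragraph: it is there that the relation ideal (carrying $R'$) and the braiding (carrying $R$) must be harmonized, and it is precisely this harmonization that the Hecke-type condition (ii) is engineered to enforce.
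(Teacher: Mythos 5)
Your proposal cannot be checked against the paper's own argument, because the paper gives none: this proposition is recalled from Majid's construction of the braided (co)vector algebras in \cite{majid4} (see also \cite{majid1}) and is stated without proof. Judged on its own terms, your reconstruction is essentially the standard argument and assigns the three hypotheses their correct roles: (i) is the condition for $PR'$ to be a morphism of $A(R)$-comodules, hence for the quadratic ideal to be a subcomodule and the quotient a comodule algebra; (ii) is exactly the vanishing of the $(1,1)$-component $(1+\Psi)(1-PR')=(1+PR)(1-PR')$ of $\underline{\Delta}$ applied to a defining relation, the outer components already lying in $I\otimes 1+1\otimes I$; and (iii), which in operator form reads $RPR'=R'PR$, i.e.\ $[PR,PR']=0$, is (given (ii)) precisely what makes the degree-two pairing $\langle f_if_j,e^ke^l\rangle=\delta_j^k\delta_i^l+R^{lk}_{ji}$ vanish on the relation subspace. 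Three small points should be made explicit to close the argument. First, all your checks are in degree two, so you should say why that suffices: the ideals are generated in degree two, and $\underline{\Delta}$, the coaction, and the pairing are all determined multiplicatively (resp.\ comultiplicatively) from there. Second, well-definedness of the pairing must also be tested against the $f$-relations, which yields the transposed version of the same identity and again reduces to (iii). Third, the existence of $\underline{S}$ is best justified by observing that both algebras are graded connected braided bialgebras, so the convolution inverse of the identity exists automatically and is forced to be $-\mathrm{id}$ on primitive generators; one does not need to verify it on the relations separately.
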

\begin{remark}\label{rem1}
There is a way to find suitable matrix $R'$ for a given $R$ in \cite{majid4}.
We know that $PR$ obeys some minimal polynomial
$
\prod_{i}(PR-x_{i})=0.
$
For each nonzero eigenvalue $x_{i}$,
we can normalise $R$ so that $x_{i}=-1$.
then set
$
R'=P+P\prod\limits_{j\neq i}(PR-x_{j}),
$
which satisfies the conditions $(i)$---$(iii)$.
It gives us at least one braided (co)vector algebra for each nonzero eigenvalue of $PR$.
\end{remark}

With these,
starting from some irreducible representation $T_{V}$ of $U_{q}(\mathfrak g)$ and its corresponding $R$-matrix $R_{VV}$.
Then there exists a unique dual pairing $\langle \cdot,\cdot\rangle$ between $U_{q}^{\text{ext}}(\mathfrak g)$ and $H_{R_{VV}}$ in \cite{HH2}
such that
$$
\langle (m^{+})^{i}_{j},t^{k}_{l}\rangle=R_{VV}{}^{ik}_{jl}=\lambda R^{ik}_{jl},\quad
\langle (m^{-})^{i}_{j},t^{k}_{l}\rangle=R_{VV}^{-1}{}^{ki}_{lj}=\lambda^{-1}(R^{-1}){}^{ki}_{lj},
$$
$$
\langle (m^{+})^{i}_{j},\tilde{t}^{k}_{l}\rangle
=(R_{VV}^{t_{2}})^{-1}{}^{ik}_{jl}=((\lambda R)^{t_{2}})^{-1}{}^{ik}_{jl},\quad
\langle (m^{-})^{i}_{j},\tilde{t}^{k}_{l}\rangle
=[(R^{-1}_{VV})^{t_{1}}]^{-1}{}^{lj}_{ki}=[(\lambda^{-1}R^{-1})^{t_{1}}]^{-1}{}^{lj}_{ki}.
$$
Such $\lambda$ is called a {\it normalization constant of quantum groups}.
The convolution-invertible algebra\,/\,anti-coalgebra maps $\R,\bar{\R}$ in
$\textrm{hom}(H_{\lambda R},U_{q}^{\textrm{ext}}(\mathfrak{g}))$ are
$$
\R(t^{i}_{j})=(m^{+})^{i}_{j},\quad
\R(\tilde{t}^{i}_{j})=(m^{+})^{-1}{}^{j}_{i};\quad
\bar{\R}(t^{i}_{j})=(m^{-})^{i}_{j},\quad
\bar{\R}(\tilde{t}^{i}_{j})=(m^{-})^{-1}{}^{j}_{i},
$$
where $U_{q}^{\text{ext}}(\mathfrak g)$ is the FRT-form of $U_{q}(\mathfrak{g})$ or extended quantized enveloping algebra
adjoined by the elements $K_{i}^{\frac{1}{m}}$,
$H_{\lambda R}$ is the Hopf algebra associated with $A(R_{VV})$,
generated by $t^{i}_{j}, \tilde{t}^{i}_{j}$ and subject to
certain relations.
With the weakly quasitriangular dual pair of Hopf algebras in the above Theorem,
we obtain that the braided (co)vector algebras
$V(R^{\prime},R)\in{}^{H_{\lambda R}}\mathfrak{M}$
and
$V^{\vee}(R^{\prime},R_{21}^{-1})\in \mathfrak{M}^{H_{\lambda R}}$.
However, in order to yield the required quantum group of higher rank $1$, we need to centrally extend the pair $(U_{q}^{\text{ext}}(\mathfrak g),H_{\lambda R})$
to the pair
$$
\Bigl(\,\widetilde{U_q^{\text{ext}}(\mathfrak
g)}=U_{q}^{\text{ext}}(\mathfrak g)\otimes k[c,c^{-1}],
\,\widetilde{H_{\lambda R}}=H_{\lambda R}\otimes k[g,g^{-1}]\,\Bigr)
$$
with the action $e^{i}\lhd c=\lambda e^{i}$,
$f_{i}\lhd c=\lambda f_{i}$ and the extended pair
$\langle c,g\rangle=\lambda$. In this way, we easily see that
the braided algebras $V(R^{\prime},R)\in{}^{\widetilde{H_{\lambda R}}}\mathfrak{M}$
and
$V^{\vee}(R^{\prime},R_{21}^{-1})\in \mathfrak{M}^{\widetilde{H_{\lambda R}}}$.

Having these,
we have the following {\it generalized double-bosonization construction Theorem} in \cite{HH2}.
\begin{theorem}\label{cor1}
Let $R_{VV}$ be the $R$-matrix corresponding to the irreducible representation $T_{V}$ of $U_{q}(\mathfrak{g})$.
There exists a normalization constant $\lambda$ such that $\lambda R=R_{VV}$.
Then the new quantum group
$U=U(V^{\vee}(R^{\prime},R_{21}^{-1}),\widetilde{U_{q}^{ext}(\mathfrak g)},V(R^{\prime},R))$ has the following the cross relations:
\begin{gather*}
cf_{i}=\lambda f_{i}c,\quad
e^{i}c=\lambda ce^{i},\quad
[c,m^{\pm}]=0,\quad
[e^{i},f_{j}]=\delta_{ij}\frac{(m^{+})^{i}_{j}c^{-1}-c(m^{-})^{i}_{j}}{q_{\ast}-q_{\ast}^{-1}};\\
e^{i}(m^{+})^{j}_{k}=R_{VV}{}^{ji}_{ab}(m^{+})^{a}_{k}e^{b},\quad
(m^{-})^{i}_{j}e^{k}=R_{VV}{}^{ki}_{ab}e^{a}(m^{-})^{b}_{j},\\
(m^{+})^{i}_{j}f_{k}=f_{b}(m^{+})^{i}_{a}R_{VV}{}^{ab}_{jk},\quad
f_{i}(m^{-})^{j}_{k}=(m^{-})^{j}_{b}f_{a}R_{VV}{}^{ab}_{ik},
\end{gather*}
and the coproduct: $$\Delta c=c\otimes c, \quad \Delta
e^{i}=e^{a}\otimes (m^{+})^{i}_{a}c^{-1}+1\otimes e^{i}, \quad
\Delta f_{i}=f_{i}\otimes 1+c(m^{-})^{a}_{i}\otimes f_{a},$$ and the
counit $\epsilon e^{i}=\epsilon f_{i}=0$.
We can normalize $e^{i}$ such that the factor
$q_{\ast}-q_{\ast}^{-1}$ meets the situation
we need.
\end{theorem}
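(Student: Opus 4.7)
The plan is to specialize Majid's general double-bosonization Theorem \ref{ml1} to the concrete data $(H,B,C)=\bigl(\widetilde{U_q^{\text{ext}}(\mathfrak g)},\,V(R',R),\,V^{\vee}(R',R_{21}^{-1})\bigr)$ with the canonical evaluation pairing $\langle f_j,e^i\rangle=\delta_{ij}$, and then read off the cross relations, the coproduct and the counit. Since we have already arranged, via the central extension by $(c,g)$ with $\langle c,g\rangle=\lambda$, that $V(R',R)\in{}^{\widetilde{H_{\lambda R}}}\mathfrak M$ and $V^{\vee}(R',R_{21}^{-1})\in\mathfrak M^{\widetilde{H_{\lambda R}}}$, and that $(\widetilde{U_q^{\text{ext}}(\mathfrak g)},\widetilde{H_{\lambda R}})$ is a weakly quasitriangular dual pair, all the hypotheses of Theorem \ref{ml1} are in place and the remaining task is essentially bookkeeping with the explicit formulas for $\mathcal R,\bar{\mathcal R}$ supplied before the statement.

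First I would handle the primitive cross relation $[e^i,f_j]$. Because both $e^i$ and $f_i$ are primitive in their respective braided categories, I can invoke the simplified commutator formula in Remark \ref{Cross},
\[
[b,c]=\mathcal R(b^{\overline{(1)}})\langle c,b^{\overline{(2)}}\rangle-\langle c^{\overline{(1)}},b\rangle\bar{\mathcal R}(c^{\overline{(2)}}),
\]
and substitute $b=e^i$, $c=f_j$. The right $\widetilde{H_{\lambda R}}$-coaction on $e^i$ is $e^i\mapsto e^a\otimes t^i_ac^{-1}$ (with the central factor coming from the $c$-extension, since $e^i\lhd c=\lambda e^i$), and similarly the left coaction on $f_i$ is $f_i\mapsto c(m^{-\text{dual}})\otimes f_a$; feeding these into the pairing and the maps $\mathcal R(t^i_j)=(m^+)^i_j$, $\bar{\mathcal R}(t^i_j)=(m^-)^i_j$ immediately yields $[e^i,f_j]=\delta_{ij}\bigl((m^+)^i_jc^{-1}-c(m^-)^i_j\bigr)/(q_*-q_*^{-1})$ after the standard normalization of $e^i$.

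Next I would derive the mixed relations between $e^i,f_i$ and $(m^\pm)^j_k$. These are obtained by specializing the general product rule of Theorem \ref{ml1} and using that the braiding on $V(R',R)$ is controlled by $R_{VV}=\lambda R$ via the coaction $t^i_j\otimes$-twist, together with the FRT pairings $\langle (m^\pm)^i_j,t^k_l\rangle$ tabulated in the excerpt. For instance, $e^i(m^+)^j_k$ is computed by commuting $(m^+)^j_k\in H$ past $e^i\in B$, which in the dual-pair picture amounts to a single application of the right coaction of $t^{\cdot}_{\cdot}$ on $e^i$ followed by pairing with $(m^+)^j_k$; this exactly produces $R_{VV}{}^{ji}_{ab}(m^+)^a_ke^b$. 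The three other relations are obtained by symmetric computations with $\bar{\mathcal R}$, with $f_i$ replacing $e^i$, and with the appropriate opposite conventions built into the braided covector algebra $V^{\vee}(R',R_{21}^{-1})$. The relations $e^ic=\lambda ce^i$, $cf_i=\lambda f_ic$ and $[c,m^\pm]=0$ are then just the definitions of the central extension.

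Finally, the coproduct formula in Theorem \ref{ml1},
\[
\Delta(c\otimes h\otimes b)=c_{\overline{(1)}}\otimes \mathcal R^{-(1)}h_{(1)}\otimes b_{\underline{(1)}}\lhd\mathcal R^{(1)}\otimes \mathcal R^{-(2)}\rhd c_{\overline{(2)}}\otimes h_{(2)}\mathcal R^{(2)}\otimes b_{\underline{(2)}},
\]
restricted to $b=e^i$ (respectively $c=f_i$), uses that $\underline\Delta(e^i)=e^i\otimes 1+1\otimes e^i$ and that the left $H$-action on $e^i$ is dual to the right coaction $e^i\mapsto e^a\otimes t^i_a$, so the action of $\mathcal R^{(1)}\otimes\mathcal R^{(2)}$ produces exactly $1\otimes e^i+e^a\otimes(m^+)^i_ac^{-1}$ after absorbing the central $c^{-1}$ coming from the extended pair. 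The $f_i$-coproduct is obtained symmetrically via $\bar{\mathcal R}$. The counit $\epsilon e^i=\epsilon f_i=0$ is immediate from braided primitivity.

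The main obstacle I anticipate is the careful bookkeeping of the $c$-extension: one must verify that the two a priori different ways of introducing the scalar $\lambda$ (as a rescaling of $R$ to $R_{VV}=\lambda R$, and as the value $\langle c,g\rangle=\lambda$ of the extended pairing) are consistent, so that the $c^{\pm 1}$ factors in $\Delta e^i$, $\Delta f_i$ and in $[e^i,f_j]$ appear with the right exponents and do not pick up extra powers of $\lambda$. Once this normalization is correctly fixed, everything else reduces to the routine but lengthy manipulation outlined above.
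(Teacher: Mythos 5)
Your proposal is correct and follows the intended route: the paper states Theorem \ref{cor1} without proof, importing it from \cite{HH2}, and the derivation there is exactly your specialization of Majid's Theorem \ref{ml1} and Remark \ref{Cross} to the braided (co)vector algebras $V(R^{\prime},R)$ and $V^{\vee}(R^{\prime},R_{21}^{-1})$, covariant under the centrally extended weakly quasitriangular dual pair $\bigl(\widetilde{U_q^{\text{ext}}(\mathfrak g)},\widetilde{H_{\lambda R}}\bigr)$ via the maps $\R,\bar{\R}$. The bookkeeping issue you flag at the end --- reconciling $\lambda$ as the rescaling $R_{VV}=\lambda R$ with $\lambda=\langle c,g\rangle$ so that the $c^{\pm1}$ factors in $\Delta e^{i}$, $\Delta f_{i}$ and $[e^{i},f_{j}]$ come out right --- is precisely what the central extension by $c,g$ is designed to fix, and is handled exactly as you describe.
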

\begin{remark}
The $R$-matrix in \cite{majid1} is the $P\circ\cdot\circ P$ of the ordinary $R$-matrix,
so the quantized enveloping algebras constructed by double-bosonization satisfy
$
E_{i}K_{j}=q_{i}^{a_{ij}}K_{j}E_{i},
$
$
F_{i}K_{j}=q_{i}^{-a_{ij}}K_{j}F_{i},
$
$
[E_{i},F_{j}]=\delta_{ij}\frac{K_{i}-K_{i}^{-1}}{q_i-q_i^{-1}},
$
where $q_{i}=q^{d_{i}}$, in addition to the same $q$-Serre relations.
So it is necessary to notice that we use this definition of $U_{q}(\mathfrak g)$ in our proof.
\end{remark}
\begin{remark}\label{R}
The $R$-matrix datum associated with a given module $V$ comes from the $R_{VV}$-matrix,
defined by
$R_{VV}=B_{VV}\circ(T_{V}\otimes T_{V})(\mathfrak{R})$.
Here
$
\mathfrak{R}=\sum\limits_{r_{1},\cdots,r_{n}=0}^{\infty}\prod\limits_{j=1}^{n}
\frac{(1-q_{\beta_{j}}^{-2})^{r_{j}}}{[r_{j}]_{q_{\beta_{j}}}}
q_{\beta_{j}}^{\frac{r_{j}(r_{j}+1)}{2}}E_{\beta_{j}}^{r_{j}}\otimes F_{\beta_{j}}^{r_{j}}
$
is main part of the universal $R$-matrix of $U_{q}(\mathfrak g)$.
$B_{VV}$ denotes the linear operator on $V\otimes V$ given by
$B_{VV}(v\otimes w):=q^{(\mu,\mu^{\prime})}v\otimes w$
for $v\in V_{\mu}$, $w\in V_{\mu^{\prime}}$.
We can endow every basis with weights indices
such that the operators $T_{V}(E_{i})$ and $T_{V}(F_{i})$ are raising and lowering the indices of the basis,
then the required $R_{VV}$-matrix is upper triangular (see \cite{HH2}).
\end{remark}
\begin{remark}\label{remm}
The matrix $m^{\pm}$ can be obtained by $S(l^{\pm}_{ij})=(m^{\pm})^{i}_{j}$,
where $l^{\pm}_{ij}$ is the $L^{\pm}$-functionals of $U_{q}(\mathfrak g)$.
The $L^{\pm}$-functionals of $U_{q}^{ext}(\mathfrak g)$ can obtained from $U_{h}(\mathfrak g)$'s $L^{\pm}$-functionals in Chapter 8 in \cite{klim}, defined by
%\begin{equation*}
\begin{gather*}
(id\otimes T_{V})(\mathcal{R})(1\otimes v_{j})=\sum\limits_{i}l_{ij}^{+}\otimes v_{i},\\
(T_{V}\otimes id)(\mathcal{R}^{-1})(v_{j}\otimes 1)=(T_{V}\otimes id)(S\otimes id)(\mathcal{R})(v_{j}\otimes 1)=\sum\limits_{i}v_{i}\otimes l_{ij}^{-},
\end{gather*}
%\end{equation*}
where
$$
\mathcal{R}=exp(h\sum\limits_{i,j}B_{ij}(H_{i}\otimes H_{j}))\prod\limits_{\beta\in \Delta_{+}}exp_{q_{\beta}}((1-q_{\beta}^{-2})(E_{\beta}\otimes F_{\beta}))
$$
is the universal $R$-matrix of $U_{h}(\mathfrak g)$.
Here matrix $(B_{ij})$ is the inverse of matrix $(C_{ij})=(d_{j}^{-1}a_{ij})$,
the $q$-exponential function $\text{exp}_{q}x$ is defined by $\text{exp}_{q}x=\sum\limits_{r=0}^{\infty}\frac{q^{\frac{r(r+1)}{2}}}{[r]_{q}!}x^{r}.$
\end{remark}

\section{Exceptional quantized enveloping algebras $U_q(\mathfrak g)$ of type $E$ series}
From the type-crossing constructions of $U_{q}(G_{2})$ and $U_{q}(F_{4})$ in \cite{HH2},
we know that it is most crucial to find suitable representations,
and get the $R_{VV}$-matrices we need,
then to achieve the required data $R,R^{\prime}$ via determining the minimal polynomial of the $PR_{VV}$-matrix.

\subsection{Type-crossing construction of $U_{q}(E_{6})$}
In order to construct $U_{q}(E_{6})$,
it is natural to start from $U_{q}(D_{5})$ and a certain representation of it.
We choose the $16$-dimensional half-spin representation $T_{V}$ of $U_{q}(D_{5})$,
which is given by the Figure $1$,
where $(s_{1},s_{2},s_{3},s_{4},s_{5})$ denotes the basis of weight $\frac{1}{2}(s_{1}\varepsilon_{1}+s_{2}\varepsilon_{2}+s_{3}\varepsilon_{3}+s_{4}\varepsilon_{4}+s_{5}\varepsilon_{5})$,
$s_{i}=\pm$.
\begin{center}
\setlength{\unitlength}{1mm}
\begin{picture}(70,115)
\put(0,0){Figure $1$.~~The $16$-dimensional half-spin representation of $U_{q}(D_{5})$}
\put(50,4){(--~--~--~--~+)}
\put(68,7){$f_{1}$}
\put(57.5,5.5){\line(0,1){10}}
\put(58,10){$\alpha_{4}$}
\put(50,16){(--~--~--~+~--)}
\put(68,19){$f_{2}$}
\put(57.5,17.5){\line(0,1){10}}
\put(58,21){$\alpha_{3}$}
\put(50,27){(--~--~+~--~--)}
\put(68,29){$f_{3}$}
\put(58.5,28.5){\line(2,1){16}}
\put(69,32){$\alpha_{2}$}
\put(65,37){(--~+~--~--~--)}
\put(83,39){$f_{4}$}
\put(45,36){\line(5,-3){12}}
\put(44,32){$\alpha_{5}$}
\put(35,37){(--~--~+++)}
\put(53,39){$f_{6}$}
\put(45.5,38){\line(0,1){10}}
\put(40,43){$\alpha_{2}$}
\put(75,38){\line(0,1){10}}
\put(75,43){$\alpha_{1}$}
\put(65,49){(+~--~--~--~--)}
\put(83,51){$f_{5}$}
\put(62,59.5){\line(5,-3){14}}
\put(73,54){$\alpha_{5}$}
\put(35,49){(--~+~--~++)}
\put(53,51){$f_{7}$}
\put(52,48){\line(5,-3){13}}
\put(57,45){$\alpha_{5}$}
\put(45.5,50.5){\line(5,3){15}}
\put(30,59){\line(5,-3){14}}
\put(30,55){$\alpha_{3}$}
\put(48.5,59){$\alpha_{1}$}
\put(30,61){\line(5,3){14}}
\put(30,64){$\alpha_{1}$}
\put(45,69){\line(5,-3){14}}
\put(55,64){$\alpha_{3}$}
\put(44.5,71){\line(0,1){9}}
\put(45,74){$\alpha_{2}$}
\put(15,69){\line(5,-3){14}}
\put(16,64){$\alpha_{4}$}
\put(15.5,70){\line(0,1){9}}
\put(11,75){$\alpha_{1}$}
\put(53,59){(+~--~--~++)}
\put(71,61){$f_{8}$}
\put(20,59){(--~++--~+)}
\put(38,61){$f_{9}$}
\put(35,69){(+~--~+~--~+)}
\put(54,71){$f_{10}$}
\put(5,69){(--~+++~--)}
\put(23,71){$f_{12}$}
\put(35,79){(++~--~--~+)}
\put(54,81){$f_{11}$}
\put(5,79){(+~--~++~--)}
\put(23,81){$f_{13}$}
\put(22,78){\line(5,-3){13}}
\put(28,75){$\alpha_{4}$}
\put(14,81){\line(5,3){14}}
\put(14,86){$\alpha_{2}$}
\put(30,89){\line(5,-3){14}}
\put(36,86){$\alpha_{4}$}
\put(20,89){(++~--~+~--)}
\put(38,91){$f_{14}$}
\put(28,90){\line(0,1){9}}
\put(28.5,94){$\alpha_{3}$}
\put(28,101){\line(0,1){9}}
\put(28.5,106){$\alpha_{5}$}
\put(20,100){(+++~--~--)}
\put(38,103){$f_{15}$}
\put(20,110){(+++++)}
\put(38,113){$f_{16}$}
\end{picture}
\end{center}

The actions of the $E_{i}$, $F_{i}$'s can be read directly from the Figure $1$,
$K_{j}$'s actions can be obtained by the defining relations of $K_{i}$ and $E_{i},F_{i}$ in $U_{q}(D_{5})$.
Since $E_{\beta}(f_{m})=f_{n} \Longleftrightarrow F_{\beta}(f_{n})=f_{m}$
and
$E_{i}^{2}$ is zero action,
we can prove that the corresponding matrix $PR_{VV}$ is symmetrizable,
which is similar to situations in \cite{HH2}.
Moreover,
for the corresponding $R$-matrix $R_{VV}$,
we have
\begin{lemma}\label{leme6}
\begin{enumerate}
\item
The minimal polynomial of $PR_{VV}$ is
$$(P{R}_{VV}+q^{-\frac{3}{4}}I)(P{R}_{VV}-q^{\frac{5}{4}}I)(P{R}_{VV}-q^{-\frac{3}{4}}I)=0.$$
\item Those entries we need in matrices $m^{\pm}$ are as follows:

\begin{itemize}
\item In $m^{+}$:

$\left.
\begin{array}{ll}
(m^{+})^{1}_{2}=-(q-q^{-1})E_{4}K_{1}^{\frac{1}{2}}K_{2}K_{3}^{\frac{3}{2}}K_{4}^{\frac{1}{4}}K_{5}^{\frac{3}{4}},&
(m^{+})^{2}_{2}=K_{1}^{\frac{1}{2}}K_{2}K_{3}^{\frac{3}{2}}K_{4}^{\frac{1}{4}}K_{5}^{\frac{3}{4}},
\\
(m^{+})^{2}_{3}=-(q-q^{-1})E_{3}K_{1}^{\frac{1}{2}}K_{2}K_{3}^{\frac{1}{2}}K_{4}^{\frac{1}{4}}K_{5}^{\frac{3}{4}},&
(m^{+})^{3}_{3}=K_{1}^{\frac{1}{2}}K_{2}K_{3}^{\frac{1}{2}}K_{4}^{\frac{1}{4}}K_{5}^{\frac{3}{4}},
\\
(m^{+})^{3}_{4}=-(q-q^{-1})E_{2}K_{1}^{\frac{1}{2}}K_{3}^{\frac{1}{2}}K_{4}^{\frac{1}{4}}K_{5}^{\frac{3}{4}},&
(m^{+})^{4}_{4}=K_{1}^{\frac{1}{2}}K_{3}^{\frac{1}{2}}K_{4}^{\frac{1}{4}}K_{5}^{\frac{3}{4}},
\\
(m^{+})^{4}_{5}=-(q-q^{-1})E_{1}K_{1}^{-\frac{1}{2}}K_{3}^{\frac{1}{2}}K_{4}^{\frac{1}{4}}K_{5}^{\frac{3}{4}},&
(m^{+})^{5}_{5}=K_{1}^{-\frac{1}{2}}K_{3}^{\frac{1}{2}}K_{4}^{\frac{1}{4}}K_{5}^{\frac{3}{4}},
\\
(m^{+})^{15}_{16}=-(q-q^{-1})E_{5}K_{1}^{-\frac{1}{2}}K_{2}^{-1}K_{3}^{-\frac{3}{2}}K_{4}^{-\frac{3}{4}}K_{5}^{-\frac{5}{4}},&
(m^{+})^{16}_{16}=K_{1}^{-\frac{1}{2}}K_{2}^{-1}K_{3}^{-\frac{3}{2}}K_{4}^{-\frac{3}{4}}K_{5}^{-\frac{5}{4}}.
\end{array}
\right.
$
\item In $m^{-}$:

$
\left.
\begin{array}{ll}
(m^{-})^{2}_{1}=q(q-q^{-1})K_{1}^{-\frac{1}{2}}K_{2}^{-1}K_{3}^{-\frac{3}{2}}K_{4}^{-\frac{1}{4}}K_{5}^{-\frac{3}{4}}F_{4},&
(m^{-})^{2}_{2}=K_{1}^{-\frac{1}{2}}K_{2}^{-1}K_{3}^{-\frac{3}{2}}K_{4}^{-\frac{1}{4}}K_{5}^{-\frac{3}{4}},
\\
(m^{-})^{13}_{12}=q(q-q^{-1})K_{1}^{\frac{1}{2}}K_{3}^{\frac{1}{2}}K_{4}^{\frac{3}{4}}K_{5}^{\frac{1}{4}}F_{1},&
%m^{-12}_{12}=K_{1}^{-\frac{1}{2}}K_{3}^{\frac{1}{2}}K_{4}^{\frac{3}{4}}K_{5}^{\frac{1}{4}},~
(m^{-})^{13}_{13}=K_{1}^{\frac{1}{2}}K_{3}^{\frac{1}{2}}K_{4}^{\frac{3}{4}}K_{5}^{\frac{1}{4}},
\\
(m^{-})^{14}_{13}=q(q-q^{-1})K_{1}^{\frac{1}{2}}K_{2}K_{3}^{\frac{1}{2}}K_{4}^{\frac{3}{4}}K_{5}^{\frac{1}{4}}F_{2},&
(m^{-})^{14}_{14}=K_{1}^{\frac{1}{2}}K_{2}K_{3}^{\frac{1}{2}}K_{4}^{\frac{3}{4}}K_{5}^{\frac{1}{4}},
\\
(m^{-})^{15}_{14}=q(q-q^{-1})K_{1}^{\frac{1}{2}}K_{2}K_{3}^{\frac{3}{2}}K_{4}^{\frac{3}{4}}K_{5}^{\frac{1}{4}}F_{3},&
(m^{-})^{15}_{15}=K_{1}^{\frac{1}{2}}K_{2}K_{3}^{\frac{3}{2}}K_{4}^{\frac{3}{4}}K_{5}^{\frac{1}{4}},
\\
(m^{-})^{16}_{15}=q(q-q^{-1})K_{1}^{\frac{1}{2}}K_{2}K_{3}^{\frac{3}{2}}K_{4}^{\frac{3}{4}}K_{5}^{\frac{5}{4}}F_{5},&
(m^{-})^{16}_{16}=K_{1}^{\frac{1}{2}}K_{2}K_{3}^{\frac{3}{2}}K_{4}^{\frac{3}{4}}K_{5}^{\frac{5}{4}}.
\end{array}
\right.
$
\end{itemize}
\end{enumerate}
\end{lemma}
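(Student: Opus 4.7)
The plan is to handle the two parts separately: the minimal polynomial of $PR_{VV}$ from a decomposition of $V\otimes V$, and the entries of $m^{\pm}$ from the universal $\mathcal{R}$-matrix recipe of Remark~\ref{remm}.

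For part (1), I would first decompose the tensor square $V\otimes V$ of the half-spin module $V=V(\lambda_5)$ of $U_q(D_5)$ into its three irreducible $U_q(D_5)$-summands; the candidate highest weights are $2\lambda_5$ (symmetric), $\lambda_4$ and $\lambda_2$ (or their $D_5$-analogues read off from the weight diagram in Figure~1). On each isotypic piece the braiding $PR_{VV}$ acts as a scalar, given (up to the sign coming from symmetric vs.\ antisymmetric parity) by the standard formula $\pm q^{\frac{1}{2}(\mu,\mu+2\rho)-(\lambda_5,\lambda_5+\rho)}$. Substituting the three highest weights $\mu$ and normalising by the overall factor $\lambda$ (used to pass from $\mathfrak{R}$ to $R_{VV}$) should give precisely the three eigenvalues $-q^{-3/4},\;q^{5/4},\;q^{-3/4}$, from which the minimal polynomial is immediate since the three eigenvalues are distinct.

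For part (2), the key tool is the two formulas from Remark~\ref{remm} defining $l^{\pm}_{ij}$, together with $(m^{\pm})^i_j=S(l^{\pm}_{ij})$. I would order the basis $f_1,\dots,f_{16}$ exactly as in Figure~1 (so that the weights are non-increasing top to bottom), which makes $(l^+_{ij})$ upper-triangular and $(l^-_{ij})$ lower-triangular. For each of the listed entries, weight considerations pin down the contribution of the $E_\beta\otimes F_\beta$ factors: on the diagonal only the Cartan exponential $\exp(h\sum B_{ij}H_i\otimes H_j)$ contributes, giving the $K$-monomials (with fractional powers coming from the inverse Cartan matrix entries $B_{ij}$ of $D_5$), while for the off-diagonal entries $(i,j)$ with $\mathrm{wt}(f_j)-\mathrm{wt}(f_i)$ equal to one simple root $\alpha_s$, only the $s=1,\ldots,5$ simple-root factor $\exp_{q_s}((1{-}q_s^{-2})E_s\otimes F_s)$ contributes a single $F_s$ (for $l^+$) or $E_s$ (for $l^-$) term. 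Applying the antipode then converts $F_s$ to $-K_s F_s$ and $E_s$ to $-E_s K_s^{-1}$, after which reshuffling the $K$-monomials produces the stated formulas.

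The main obstacle is the bookkeeping in part (2): one must compute the inverse $(B_{ij})$ of the matrix $(C_{ij})=(d_j^{-1}a_{ij})$ for $D_5$ and assemble from it, for each basis vector $f_j$, the fractional-power product of the $K_i$'s that arises from evaluating $\exp(h\sum B_{ij}H_i\otimes H_j)$ on $1\otimes v_j$; the prefactors in front of the off-diagonal $E_s$'s and $F_s$'s are tied to the same weight via the factor $(1-q_s^{-2})$ coming from $\exp_{q_s}$, and these must be tracked consistently together with the antipode signs and the overall normalisation constant $\lambda$ so that the resulting $m^{\pm}$ entries match those required for the $R_{VV}$-pairing of Theorem~\ref{cor1}. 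Part (1), by contrast, is routine once the tensor decomposition and the symmetric/antisymmetric character of each summand are identified, and the remaining assertion that $PR_{VV}$ is symmetrizable follows as in \cite{HH2} from the vanishing of $E_i^2$ and the bijection $E_\beta(f_m)=f_n\Leftrightarrow F_\beta(f_n)=f_m$ exhibited by Figure~1.
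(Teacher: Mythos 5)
Your plan for part (2) is essentially the paper's own proof, which consists of one sentence invoking Remark~\ref{remm}; your expansion of it is reasonable, apart from one bookkeeping slip: in $(\mathrm{id}\otimes T_V)(\mathcal{R})(1\otimes v_j)=\sum_i l^+_{ij}\otimes v_i$ it is the second tensor factor $F_\beta$ that gets evaluated on the module, so the off-diagonal $l^+_{ij}$ (and hence $(m^+)^i_j$) carry $E_\beta$'s, not $F_\beta$'s, and dually for $l^-$. For part (1), however, your route is genuinely different from the paper's: the paper never uses the ribbon/quantum-Casimir eigenvalue formula. It only uses the tensor-product decomposition to know that there are exactly three eigenvalues, and then extracts the elementary symmetric functions $\sigma_1,\sigma_2,\sigma_3$ of $x_1,x_2,x_3$ by explicitly computing a handful of entries of $\prod_i(PR_{VV}-x_iI)$ in the weight spaces attached to the index pairs $(1,2),(2,1)$ and $(1,16),(16,1)$, and solving the resulting system.

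The genuine gap is in your central claim that substituting the three highest weights into $\epsilon_\mu\,q^{\frac12(\mu,\mu+2\rho)-(\lambda_5,\lambda_5+2\rho)}$ ``should give precisely'' the eigenvalues $q^{5/4},\,q^{-3/4},\,-q^{-3/4}$: it does not, and you cannot assert this without checking it. With $\lambda_5=\frac12(\varepsilon_1+\cdots+\varepsilon_5)$ and $2\rho=(8,6,4,2,0)$ one gets $(\lambda_5,\lambda_5+2\rho)=\frac{45}{4}$, so the formula yields $q^{5/4}$ on $V(\varepsilon_1+\cdots+\varepsilon_5)$ and $-q^{-3/4}$ on $V(\varepsilon_1+\varepsilon_2+\varepsilon_3)$, consistent with the lemma, but $+q^{\frac92-\frac{45}{4}}=q^{-27/4}$ on the ten-dimensional summand $V(\varepsilon_1)$, not $q^{-3/4}$. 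So your method, carried out honestly, contradicts rather than proves the stated minimal polynomial, and you must confront that discrepancy instead of papering over it. A convention-free cross-check sides with your formula and against the statement: with $R_{VV}$ as in Remark~\ref{R}, $\det(PR_{VV})=\det(P)\det(B_{VV})\det\bigl((T_V\otimes T_V)(\mathfrak{R})\bigr)=1$, since $\binom{16}{2}$ is even, the weights of the half-spin module sum to zero, and the quasi-$R$-matrix part is unipotent; the list $q^{5/4},-q^{-3/4},q^{-27/4}$ with multiplicities $126,120,10$ has product $1$, whereas no assignment of $q^{5/4},\pm q^{-3/4}$ to summands of those dimensions does. The source of the mismatch with the paper's computation is that its inventory of nonzero entries in the rows $(1,16)$ and $(16,1)$ keeps only the single-root terms $E_\beta\otimes F_\beta$ of $\mathfrak{R}$ and so takes $(PR_{VV})^{1,16}_{1,16}=0$, whereas multi-root terms such as $E_{\varepsilon_1+\varepsilon_2}E_{\varepsilon_3+\varepsilon_4}\otimes F_{\varepsilon_1+\varepsilon_2}F_{\varepsilon_3+\varepsilon_4}$ (whose root sum is $\lambda_5-\mathrm{wt}(f_1)=\varepsilon_1+\varepsilon_2+\varepsilon_3+\varepsilon_4$) do contribute to that entry on this minuscule module. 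Until this is resolved, neither your argument nor the one in the paper establishes part (1) as stated.
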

\begin{proof}
Those entries in $(2)$ of Lemma \ref{leme6}
can be obtained by Remark \ref{remm}.
Then we will focus on the minimal polynomial in $(1)$.
The decomposition of tensor product for this spin representation is
$T_{V}\otimes T_{V}=V_{1}\oplus V_{2}\oplus V_{3}$,
where
$V_{i}, i=1,2,3$ is the irreducible representation with highest weight
$\varepsilon_{1}+\varepsilon_{2}+\varepsilon_{3}+\varepsilon_{4}+\varepsilon_{5},
\varepsilon_{1}+\varepsilon_{2}+\varepsilon_{3},
\varepsilon_{1}$, respectively.
This means that matrix $PR_{VV}$ has only $3$ distinct eigenvalues,
denoted by $x_{1},x_{2},x_{3}$.
Then $P{R}_{VV}$ obeys the minimal polynomial
$(P{R}_{VV}-x_{1}I)(P{R}_{VV}-x_{2}I)(P{R}_{VV}-x_{3}I)=0$.
We use an ingenious method to get these eigenvalues $x_{i}$.

According to Remark \ref{R},
we obtain the nonzero entries of $PR_{VV}-x_{i}I$ in rows of $(12),(21)$ are
\begin{equation*}
\begin{split}
(P{R}_{VV}-x_{i}I)^{1}_{1}{}^{2}_{2}&=-x_{i},\\
(P{R}_{VV}-x_{i}I)^{1}_{2}{}^{2}_{1}&=q^{\frac{1}{4}},\\
(P{R}_{VV}-x_{i}I)^{2}_{1}{}^{1}_{2}&=q^{\frac{1}{4}},\\
(P{R}_{VV}-x_{i}I)^{2}_{2}{}^{1}_{1}&=q^{\frac{1}{4}}(q-q^{-1})-x_{i}.
\end{split}
\end{equation*}
Then we obtain that the 
nonzero entries in row $(12)$ of $(P{R}_{VV}-x_{1}I)(P{R}_{VV}-x_{2}I)$  still lie in columns $(12)$ and $(21)$,
given by
$$
[(P{R}_{VV}-x_{1}I)(P{R}_{VV}-x_{2}I)]{}^{1}_{1}{}^{2}_{2}
=(P{R}_{VV}-x_{1}I)^{1}_{1}{}^{2}_{2}(P{R}_{VV}-x_{2}I)^{1}_{1}{}^{2}_{2}
+(P{R}_{VV}-x_{1}I)^{1}_{2}{}^{2}_{1}(P{R}_{VV}-x_{2}I)^{2}_{1}{}^{1}_{2},
$$
$$
[(P{R}_{VV}-x_{1}I)(P{R}_{VV}-x_{2}I)]{}^{1}_{2}{}^{2}_{1}
=(P{R}_{VV}-x_{1}I)^{1}_{1}{}^{2}_{2}(P{R}_{VV}-x_{2}I)^{1}_{2}{}^{2}_{1}
+(P{R}_{VV}-x_{1}I)^{1}_{2}{}^{2}_{1}(P{R}_{VV}-x_{2}I)^{2}_{2}{}^{1}_{1}.
$$
With these, some entries in row $(12)$ of $(P{R}_{VV}-x_{1}I)(P{R}_{VV}-x_{2}I)(P{R}_{VV}-x_{3}I)$ can be obtained by
\begin{equation*}
\begin{split}
&[(P{R}_{VV}-x_{1}I)(P{R}_{VV}-x_{2}I)(P{R}_{VV}-x_{3}I)]^{1}_{1}{}^{2}_{2}\\
&\quad =[(P{R}_{VV}-x_{1}I)(P{R}_{VV}-x_{2}I)]^{1}_{1}{}^{2}_{2}(P{R}_{VV}-x_{3}I)^{1}_{1}{}^{2}_{2}\\
&\qquad+[(P{R}_{VV}-x_{1}I)(P{R}_{VV}-x_{2}I)]^{1}_{2}{}^{2}_{1}(P{R}_{VV}-x_{3}I)^{2}_{1}{}^{1}_{2}\\
&\quad=-x_{1}x_{2}x_{3}-q^{\frac{1}{2}}(x_{1}+x_{2}+x_{3})+q^{\frac{3}{4}}(q-q^{-1}),
\\
&[(P{R}_{VV}-x_{1}I)(P{R}_{VV}-x_{2}I)(P{R}_{VV}-x_{3}I)]^{1}_{2}{}^{2}_{1}\\
&\quad=[(P{R}_{VV}-x_{1}I)(P{R}_{VV}-x_{2}I)]^{1}_{1}{}^{2}_{2}(P{R}_{VV}-x_{3}I)^{1}_{2}{}^{2}_{1}\\
&\qquad+[(P{R}_{VV}-x_{1}I)(P{R}_{VV}-x_{2}I)]^{1}_{2}{}^{2}_{1}(P{R}_{VV}-x_{3}I)^{2}_{2}{}^{1}_{1}\\
&\quad=q^{\frac{1}{4}}(x_{1}x_{2}+x_{2}x_{3}+x_{1}x_{3})-q^{\frac{1}{2}}(q-q^{-1})(x_{1}+x_{2}+x_{3})
+q^{\frac{3}{4}}(q-q^{-1})^{2}+q^{\frac{3}{4}}.
\end{split}
\end{equation*}

We consider the nonzero entries in rows $(1,16),(16,1)$ as follows
\begin{equation*}
\begin{split}
(P{R}_{VV}-x_{i}I)^{1,}_{1,}{}^{16}_{16}&=-x_{i},\\
(P{R}_{VV}-x_{i}I)^{1,}_{16,}{}^{16}_{1}&=q^{-\frac{3}{4}},\\
(P{R}_{VV}-x_{i}I)^{16,}_{16,}{}^{1}_{1}&=-x_{i},\\
(P{R}_{VV}-x_{i}I)^{16,}_{1,}{}^{1}_{16}&=q^{-\frac{3}{4}},\\
(P{R}_{VV}-x_{i}I)^{16,}_{a_{0},}{}^{1}_{b_{0}}&=Y_{a_{0}}
\end{split}
\end{equation*}
for some columns $(a_{0},b_{0})$.
If there exists $Y_{a_{0}}\neq 0$,
then
$
1<a_{0}, \, b_{0}<16,
$
so
$$(P{R}_{VV}-x_{i}I)^{a_{0},}_{1,}{}^{b_{0}}_{16}=0.$$
Nonzero entries in row $(1,16)$ of $(P{R}_{VV}-x_{1}I)(P{R}_{VV}-x_{2}I)$ are 
given by
\begin{equation*}
\begin{split}
&[(P{R}_{VV}-x_{1}I)(P{R}_{VV}-x_{2}I)]^{1,}_{1,}{}^{16}_{16}\\
&\quad=(P{R}_{VV}-x_{1}I)^{1,}_{1,}{}^{16}_{16}(P{R}_{VV}-x_{2}I)^{1,}_{1,}{}^{16}_{16}
+(P{R}_{VV}-x_{1}I)^{1,}_{16,}{}^{16}_{1}(P{R}_{VV}-x_{2}I)^{16,}_{1,}{}^{1}_{16},
\\
&[(P{R}_{VV}-x_{1}I)(P{R}_{VV}-x_{2}I)]^{1,}_{16,}{}^{16}_{1}\\
&\quad=(P{R}_{VV}-x_{1}I)^{1,}_{1,}{}^{16}_{16}(P{R}_{VV}-x_{2}I)^{1,}_{16,}{}^{16}_{1}
+(P{R}_{VV}-x_{1}I)^{1,}_{16,}{}^{16}_{1}(P{R}_{VV}-x_{2}I)^{16,}_{16,}{}^{1}_{1},
\\
&[(P{R}_{VV}-x_{1}I)(P{R}_{VV}-x_{2}I)]^{1,}_{a_{0},}{}^{16}_{b_{0}}\\
&\quad=(P{R}_{VV}-x_{1}I)^{1,}_{1,}{}^{16}_{16}(P{R}_{VV}-x_{2}I)^{1,}_{a_{0},}{}^{16}_{b_{0}}
+(P{R}_{VV}-x_{1}I)^{1,}_{16,}{}^{16}_{1}(P{R}_{VV}-x_{2}I)^{16,}_{a_{0},}{}^{1}_{b_{0}}\\
&\quad=(P{R}_{VV}-x_{1}I)^{1,}_{16,}{}^{16}_{1}(P{R}_{VV}-x_{2}I)^{16,}_{a_{0},}{}^{1}_{b_{0}}.
\end{split}
\end{equation*}

With these,
the entry $[(P{R}_{VV}-x_{1}I)(P{R}_{VV}-x_{2}I)(P{R}_{VV}-x_{3}I)]^{1,}_{1,}{}^{16}_{16}$ is
\begin{equation*}
\begin{split}
&[(P{R}_{VV}-x_{1}I)(P{R}_{VV}-x_{2}I)(P{R}_{VV}-x_{3}I)]^{1,}_{1,}{}^{16}_{16}\\
&\quad=[(P{R}_{VV}-x_{1}I)(P{R}_{VV}-x_{2}I)]^{1,}_{1,}{}^{16}_{16}(P{R}_{VV}-x_{3}I)^{1,}_{1,}{}^{16}_{16}\\
&\qquad+[(P{R}_{VV}-x_{1}I)(P{R}_{VV}-x_{2}I)]^{1,}_{16,}{}^{16}_{1}(P{R}_{VV}-x_{3}I)^{16,}_{1,}{}^{1}_{16}\\
&\qquad+[(P{R}_{VV}-x_{1}I)(P{R}_{VV}-x_{2}I)]^{1,}_{a_{0},}{}^{16}_{b_{0}}(P{R}_{VV}-x_{3}I)^{a_{0},}_{1,}{}^{b_{0}}_{16}\\
&\quad=[(P{R}_{VV}-x_{1}I)(P{R}_{VV}-x_{2}I)]^{1,}_{1,}{}^{16}_{16}(P{R}_{VV}-x_{3}I)^{1,}_{1,}{}^{16}_{16}\\
&\qquad+[(P{R}_{VV}-x_{1}I)(P{R}_{VV}-x_{2}I)]^{1,}_{16,}{}^{16}_{1}(P{R}_{VV}-x_{3}I)^{16,}_{1,}{}^{1}_{16}\\
&\quad=-x_{1}x_{2}x_{3}-q^{-\frac{3}{2}}(x_{1}+x_{2}+x_{3}).
\end{split}
\end{equation*}

Then we have the following equations

$
\left\{
\begin{array}{l}
-x_{1}x_{2}x_{3}-q^{\frac{1}{2}}(x_{1}+x_{2}+x_{3})+q^{\frac{3}{4}}(q-q^{-1})=0,\\
q^{\frac{1}{4}}(x_{1}x_{2}+x_{2}x_{3}+x_{1}x_{3})-q^{\frac{1}{2}}(q-q^{-1})(x_{1}+x_{2}+x_{3})
+q^{\frac{3}{4}}(q-q^{-1})^{2}+q^{\frac{3}{4}}=0,\\
-x_{1}x_{2}x_{3}-q^{-\frac{3}{2}}(x_{1}+x_{2}+x_{3})=0.
\end{array}
\right.
$

Solving these equations,
we obtain that these $3$ eigenvalues are
$q^{\frac{5}{4}},q^{-\frac{3}{4}},-q^{-\frac{3}{4}}$.
So the minimal polynomial of $PR_{VV}$ is
$(P{R}_{VV}+q^{-\frac{3}{4}}I)(P{R}_{VV}-q^{\frac{5}{4}}I)(P{R}_{VV}-q^{-\frac{3}{4}}I)=0.$
\end{proof}
From the above Lemma \ref{leme6},
setting
$$R=q^{\frac{3}{4}}R_{VV},\quad
R^{\prime}=RPR-(q^{2}+1)R+(q^{2}+1)P,$$
then we obtain $(PR+I)(PR^{\prime}-I)=0$.
With these, we have the following
\begin{theorem}
With the quantum normalization constant $\lambda=q^{-\frac{3}{4}}$,
identify $e^{16}, f_{16}$, and $(m^{+})^{16}_{16}c^{-1}$ with the additional simple root vectors $E_{6},F_{6}$, and the group-like element $K_{6}$, respectively,
then the new quantum group $U(V^{\vee}(R^{\prime},R_{21}^{-1}),\widetilde{U_{q}^{ext}(D_{5})},V(R^{\prime},R))$
is the quantized enveloping algebra $U_{q}(E_{6})$ with $K_{i}^{\frac{1}{4}}$ adjoined.
\end{theorem}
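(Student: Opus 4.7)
The plan is to verify the hypotheses of Theorem \ref{cor1} for the 16-dimensional half-spin representation $T_V$ of $U_q(D_5)$ and then identify the resulting double-bosonized Hopf algebra with $U_q(E_6)$ by matching Chevalley generators and checking the defining relations. The overall architecture follows the type-crossing strategy already established for $G_2$ and $F_4$ in \cite{HH2}: construct the braided (co)vector algebras $V(R',R)$ and $V^\vee(R',R_{21}^{-1})$ from the eigenvalue data of $PR_{VV}$, apply the generalized double-bosonization to adjoin them to $\widetilde{U_q^{\text{ext}}(D_5)}$, and then reinterpret the resulting highest/lowest-weight generators as the $\alpha_6$-Chevalley triple of $U_q(E_6)$.

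First I would verify the normalization and algebraic prerequisites. From Lemma \ref{leme6}(1) the eigenvalues of $PR_{VV}$ are $q^{5/4}, q^{-3/4}, -q^{-3/4}$, so the choice $\lambda = q^{-3/4}$ gives $\lambda R = R_{VV}$ and the rescaled matrix $PR$ has eigenvalues $q^{2}, 1, -1$. Picking the distinguished eigenvalue $-1$ and invoking Remark \ref{rem1}, the prescription $R' = P + P(PR - q^{2}I)(PR - I) = RPR - (q^{2}+1)R + (q^{2}+1)P$ automatically satisfies conditions (i)--(iii) of Proposition \ref{prop1}, yielding the braided (co)vector algebras $V(R',R)$ and $V^\vee(R',R_{21}^{-1})$. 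Theorem \ref{cor1} then produces a Hopf algebra $U$ whose cross relations are explicit in terms of $R_{VV}$, the $m^\pm$ entries of $U_q^{\text{ext}}(D_5)$, and the central generator $c$.

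Next I would convert the abstract cross relations into Chevalley-type relations. Setting $E_6 := e^{16}$, $F_6 := f_{16}$ and $K_6 := (m^+)^{16}_{16}\,c^{-1}$ (with $e^{16}$ rescaled so that the constant $q_\ast - q_\ast^{-1}$ becomes $q - q^{-1}$, using the freedom in Theorem \ref{cor1}), the explicit formulas for $(m^\pm)^{16}_{16}$ in Lemma \ref{leme6}(2) show $(m^+)^{16}_{16}(m^-)^{16}_{16} = 1$, hence $K_6^{-1} = c\,(m^-)^{16}_{16}$, and the commutator relation of Theorem \ref{cor1} specializes to $[E_6, F_6] = (K_6 - K_6^{-1})/(q - q^{-1})$. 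The remaining Cartan-type relations $K_6 E_j K_6^{-1} = q^{a_{j6}} E_j$, $K_j E_6 K_j^{-1} = q^{a_{6j}} E_6$ and their $F$-analogues follow from $e^i (m^+)^j_k = R_{VV}{}^{ji}_{ab}(m^+)^a_k e^b$ and $(m^+)^i_j f_k = f_b (m^+)^i_a R_{VV}{}^{ab}_{jk}$ by reading off the diagonal of $R_{VV}$ on weight pairs involving $f_{16}$; the exponents match precisely the row/column of the $E_6$ Cartan matrix corresponding to the new node $6$.

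The main obstacle is extracting the $q$-Serre relations between $E_6$ and the Chevalley generators $E_1,\ldots,E_5$ of $U_q(D_5)$. These must come from the braided-vector algebra relations $e^i e^j = R'{}^{ji}_{ab}\, e^a e^b$ inside $V(R',R)$, restricted to the pairs of weight-indices that encode adjacency in the extended Dynkin diagram. Since $T_V$ is minuscule, $f_{16}$ has exactly one weight-neighbor under each $F_i$, and the upper-triangular structure of $R_{VV}$ from Remark \ref{R}, combined with the fact that $PR'$ acts as $1$ on the $-1$-eigenspace of $PR$, should confine the quadratic relations that feature $e^{16}$ to precisely one adjacent node, giving the expected cubic $q$-Serre relation there and the commuting relations $[E_6, E_j] = 0$ elsewhere; a parallel computation in $V^\vee(R',R_{21}^{-1})$ delivers the $F$-Serre relations. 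Once all these relations are established, the universal property of $U_q(E_6)$ yields a surjection from $U_q(E_6)$ (extended by $K_i^{1/4}$) onto $U$, and a PBW-type dimension comparison using the tensor decomposition $\bar{C}\otimes H\otimes B$ of Theorem \ref{ml1} closes the proof.
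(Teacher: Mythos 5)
Your overall architecture --- normalize $R_{VV}$ using the eigenvalues from Lemma \ref{leme6}(1), build $R'$ via Remark \ref{rem1}, feed the data into Theorem \ref{cor1}, and match the resulting cross relations against the Chevalley presentation of $U_q(E_6)$ --- is exactly the paper's strategy, and your treatment of $[E_6,F_6]$, the coproducts, and the Cartan-type relations is sound: the explicit monomials for $(m^{\pm})^{16}_{16}$ in Lemma \ref{leme6}(2) are indeed mutually inverse, so $K_6^{-1}=c\,(m^-)^{16}_{16}$ as you claim, and the $K$-relations do follow from the diagonal $R_{VV}$-entries as in the paper.

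The gap is in your account of the $q$-Serre relations. You assert they ``must come from the braided-vector algebra relations $e^ie^j=R'{}^{ji}_{ab}e^ae^b$ inside $V(R',R)$,'' but those relations involve only the $e^i$ and cannot by themselves produce any identity mixing $E_6=e^{16}$ with $E_5\in U_q(D_5)$. The missing idea --- the crux of the paper's argument --- is that the cross relation of Theorem \ref{cor1} applied to the off-diagonal entry $(m^+)^{15}_{16}=-(q-q^{-1})E_5(m^+)^{16}_{16}$ (which is where $E_5$ enters the picture) lets one solve for the next-to-top vector as a $q$-commutator, $e^{15}=qe^{16}E_5-E_5e^{16}$. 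Only after this identification does the internal relation of $V(R',R)$, which yields $e^{16}e^{15}=qe^{15}e^{16}$, translate into $(E_6)^2E_5-[2]_qE_6E_5E_6+E_5(E_6)^2=0$; and the \emph{other} Serre relation $(E_5)^2E_6-[2]_qE_5E_6E_5+E_6(E_5)^2=0$ does not use $V(R',R)$ at all --- it comes from a second cross relation, $e^{15}(m^+)^{15}_{16}=q^{\frac{5}{4}}(m^+)^{15}_{16}e^{15}$, which gives $e^{15}E_5=qE_5e^{15}$. Without the $q$-commutator identification your plan produces neither relation. Two smaller points: your parenthetical is backwards --- $(PR+1)(PR'-1)=0$ together with the commutativity of $PR$ and $PR'$ forces $PR'$ to act as the identity on the eigenspaces of $PR$ with eigenvalue $\neq -1$, not on the $-1$-eigenspace; and the closing surjection-plus-PBW comparison you propose is not carried out in the paper, which stops once the defining relations are verified.
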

\begin{proof}
Under the above identification,
$[E_{6},F_{6}]=\frac{K_{6}-K_{6}^{-1}}{q-q_{-1}}$,
$\Delta(E_{6})=E_{6}\otimes K_{6}+1\otimes E_{6}$,
and
$\Delta(F_{6})=F_{6}\otimes 1+K_{6}^{-1}\otimes F_{6}$
can be obtained directly by Theorem \ref{cor1}.
We find that the minor diagonal entries $(m^{-})^{i+1}_{i}$ has the form
$(m^{-})^{i+1}_{i}=q(q-q^{-1})(m^{-})^{i+1}_{i+1}F$.
Combing with $(m^{-})^{i+1}_{i}e^{16}=\lambda R^{16}_{16}{}^{i+1}_{i+1}e^{16}(m^{-})^{i+1}_{i}$
and $(m^{-})^{i+1}_{i+1}e^{16}=\lambda R^{16}_{16}{}^{i+1}_{i+1}e^{16}(m^{-})^{i+1}_{i+1}$ given by the equalities in Theorem \ref{cor1},
we obtain $[E_{6},F_{j}]=0,$
$1\leq j\leq 5$,
namely,
$[E_{6},F_{i}]=\delta_{ij}\frac{K_{6}-K_{6}^{-1}}{q-q_{-1}}$.
According to the cross relations in Theorem \ref{cor1} again,
we have

$
\left.
\begin{array}{l}
e^{16}(m^{+})^{j}_{j}=\lambda R^{j}_{16}{}^{j}_{16}(m^{+})^{j}_{j}e^{16},\\
(m^{+})^{i+1}_{i+1}K_{5-i}=(m^{+})^{i}_{i}, \quad
1\leq i\leq 4,\\
R^{i}_{16}{}^{i}_{16}=1,\quad
1\leq i\leq 5,\quad
R^{16}_{16}{}^{16}_{16}=q^{2}
\end{array}
\right\}
\Longrightarrow
\left\{
\begin{array}{l}
E_{6}K_{i}=K_{i}E_{6}, \quad 1\leq i\leq 4,\\
E_{6}K_{5}=q^{-1}K_{5}E_{6},\\
E_{6}K_{6}=q^{2}K_{6}E_{6}.
\end{array}
\right.
$

The relations between $K_{6}$ and others $E_{i}$'s are given by

$
E_{1}K_{6}
=E_{1}K_{1}^{-\frac{1}{2}}K_{2}^{-1}K_{3}^{-\frac{3}{2}}K_{4}^{-\frac{3}{4}}K_{5}^{-\frac{5}{4}}c^{-1}
=q^{-1}qK_{1}^{-\frac{1}{2}}K_{2}^{-1}K_{3}^{-\frac{3}{2}}K_{4}^{-\frac{3}{4}}K_{5}^{-\frac{5}{4}}c^{-1}E_{1}
=K_{6}E_{1}.
$

$
E_{2}K_{6}=E_{2}K_{1}^{-\frac{1}{2}}K_{2}^{-1}K_{3}^{-\frac{3}{2}}K_{4}^{-\frac{3}{4}}K_{5}^{-\frac{5}{4}}c^{-1}
=q^{\frac{1}{2}}q^{-2}q^{\frac{3}{2}}K_{1}^{-\frac{1}{2}}K_{2}^{-1}K_{3}^{-\frac{3}{2}}K_{4}^{-\frac{3}{4}}K_{5}^{-\frac{5}{4}}c^{-1}E_{2}
=K_{6}E_{2},
$

$
E_{3}K_{6}=E_{3}K_{1}^{-\frac{1}{2}}K_{2}^{-1}K_{3}^{-\frac{3}{2}}K_{4}^{-\frac{3}{4}}K_{5}^{-\frac{5}{4}}c^{-1}
=qq^{-3}q^{\frac{3}{4}}q^{\frac{5}{4}}K_{1}^{-\frac{1}{2}}K_{2}^{-1}K_{3}^{-\frac{3}{2}}K_{4}^{-\frac{3}{4}}K_{5}^{-\frac{5}{4}}c^{-1}E_{3}
=K_{6}E_{3},
$

$
E_{4}K_{6}=E_{4}K_{1}^{-\frac{1}{2}}K_{2}^{-1}K_{3}^{-\frac{3}{2}}K_{4}^{-\frac{3}{4}}K_{5}^{-\frac{5}{4}}c^{-1}
=q^{\frac{1}{2}}q^{-2}q^{\frac{3}{2}}K_{1}^{-\frac{1}{2}}K_{2}^{-1}K_{3}^{-\frac{3}{2}}K_{4}^{-\frac{3}{4}}K_{5}^{-\frac{5}{4}}c^{-1}E_{4}
=K_{6}E_{4},
$

$
E_{5}K_{6}=E_{5}K_{1}^{-\frac{1}{2}}K_{2}^{-1}K_{3}^{-\frac{3}{2}}K_{4}^{-\frac{3}{4}}K_{5}^{-\frac{5}{4}}c^{-1}
=q^{\frac{3}{2}}q^{-\frac{5}{2}}K_{1}^{-\frac{1}{2}}K_{2}^{-1}K_{3}^{-\frac{3}{2}}K_{4}^{-\frac{3}{4}}K_{5}^{-\frac{5}{4}}c^{-1}E_{5}
=q^{-1}K_{6}E_{5}.
$

We will explore the $q$-Serre relations between $E_{6}$ and others $E_{i}$'s.
$E_{i}$'s appear in the minor entries of $m^{+}$,
so we consider the cross relations related to $(m^{+})^{i}_{i+1}$ in Theorem \ref{cor1}.
$
\left.
\begin{array}{l}
(m^{+})^{i}_{i+1}=-(q-q^{-1})E_{5-i}(m^{+})^{i+1}_{i+1},\\
e^{16}(m^{+})^{i}_{i+1}=\lambda (m^{+})^{i}_{i+1}e^{16},\\
e^{16}(m^{+})^{i+1}_{i+1}=\lambda (m^{+})^{i+1}_{i+1}e^{16},
1\leq i\leq 4,\\
e^{16}(m^{+})^{15}_{16}=\lambda (q(m^{+})^{15}_{16}e^{16}+q(q-q^{-1})(m^{+})^{16}_{16}e^{15},
\end{array}
\right\}
\Longrightarrow
\left\{
\begin{array}{l}
E_{6}E_{i}-E_{i}E_{6}=0,\\
e^{15}=qe^{16}E_{5}-E_{5}e^{16}.
\end{array}
\right.
$

So we need to know the relations between $e^{15}$ and $e^{16},E_{5}$. Indeed,
$e^{15}e^{16}=R^{\prime}{}^{16,}_{a,}{}^{15}_{b}e^{a}e^{b}$ $=(2q^{2}+1)e^{15}e^{16}-2qe^{16}e^{15}
\Longrightarrow~e^{16}e^{15}=qe^{15}e^{16}$,
then combing with the above equality $e^{15}=qe^{16}E_{5}-E_{5}e^{16}$,
we obtain
$$
(E_{6})^{2}E_{5}-[2]_{q}E_{6}E_{5}E_{6}+E_{5}(E_{6})^{2}=0.
$$
On the other hand,
the relation of $e^{15}$ and $E_{5}$ is given by $e^{15}(m^{+})^{15}_{16}=\lambda R^{15}_{a}{}^{15}_{b}(m^{+})^{a}_{16}e^{b}=q^{\frac{5}{4}}(m^{+})^{15}_{16}e^{15}.$
Putting the expression of $(m^{+})^{15}_{16}$ into the equality,
we get
$e^{15}E_{5}=qE_{5}e^{15}.$
Combing with the above equality $e^{15}=qe^{16}E_{5}-E_{5}e^{16}$ again,
we obtain
$$
(E_{5})^{2}E_{6}-[2]_{q}E_{5}E_{6}E_{5}+E_{6}(E_{5})^{2}=0.
$$

The relations of $f_{16}$ and $F_{i},K_{i}$ can be obtained in a similar way.
\end{proof}

\subsection{Inductive construction of $U_{q}(E_{7})$}
For the construction of $U_{q}(E_{7})$,
%it is natural to start from $U_{q}(E_{6})$ and its some representation,
we choose the $27$-dimensional minimal fundamental representation of $U_{q}(E_{6})$ given by the following Figure $2$,
which also appeared in \cite{KM, KKM}.
$(s_{1},\cdots s_{i})$ denotes the basis of weight $s_{1}\lambda_{1}+\cdots+s_{i}\lambda_{i}$,
$s_{j}=0,1,\overline{1}$,
where $\overline{1}=-1$.
%which are a little different from that in \cite{KM}.
%$(s_{1},\cdots s_{i})$ denotes the basis of weight $s_{1}\lambda_{1}+\cdots+s_{i}\lambda_{i}$,
%$s_{j}=0,1,\overline{1}$,
%where $\overline{1}=-1$.
Corresponding to this representation,
we have the following Lemma \ref{leme7} by the similar analysis.
\begin{lemma}\label{leme7}
$(1)$
$PR_{VV}$ obeys the minimal polynomial
$$(P{R}_{VV}-q^{\frac{4}{3}}I)(P{R}_{VV}-q^{-\frac{2}{3}}I)(P{R}_{VV}+q^{-\frac{2}{3}}I)=0.$$
$(2)$
The entries we need in $m^{\pm}$ are obtained by Remark \ref{remm},
\begin{itemize}
\item In $m^{+}:$

$
\left.
\begin{array}{ll}
(m^{+})^{1}_{2}=(q^{-1}-q)E_{1}K_{1}^{\frac{1}{3}}K_{2}K_{3}^{\frac{5}{3}}K_{4}^{2}K_{5}^{\frac{4}{3}}K_{6}^{\frac{2}{3}},&
(m^{+})^{2}_{2}=K_{1}^{\frac{1}{3}}K_{2}K_{3}^{\frac{5}{3}}K_{4}^{2}K_{5}^{\frac{4}{3}}K_{6}^{\frac{2}{3}},
\\
(m^{+})^{2}_{3}=(q^{-1}-q)E_{3}K_{1}^{\frac{1}{3}}K_{2}K_{3}^{\frac{2}{3}}K_{4}^{2}K_{5}^{\frac{4}{3}}K_{6}^{\frac{2}{3}},&
(m^{+})^{3}_{3}=K_{1}^{\frac{1}{3}}K_{2}K_{3}^{\frac{2}{3}}K_{4}^{2}K_{5}^{\frac{4}{3}}K_{6}^{\frac{2}{3}},
\\
(m^{+})^{3}_{4}=(q^{-1}-q)E_{4}K_{1}^{\frac{1}{3}}K_{2}K_{3}^{\frac{2}{3}}K_{4}K_{5}^{\frac{4}{3}}K_{6}^{\frac{2}{3}},&
(m^{+})^{4}_{4}=K_{1}^{\frac{1}{3}}K_{2}K_{3}^{\frac{2}{3}}K_{4}K_{5}^{\frac{4}{3}}K_{6}^{\frac{2}{3}},
\\
(m^{+})^{4}_{5}{=}(q^{-1}-q)E_{2}K_{1}^{\frac{1}{3}}K_{3}^{\frac{2}{3}}K_{4}K_{5}^{\frac{4}{3}}K_{6}^{\frac{2}{3}},&
(m^{+})^{5}_{5}{=}K_{1}^{\frac{1}{3}}K_{3}^{\frac{2}{3}}K_{4}K_{5}^{\frac{4}{3}}K_{6}^{\frac{2}{3}},~~
(m^{+})^{5}_{6}{=}0,
\\
(m^{+})^{4}_{6}=(q^{-1}-q)E_{5}K_{1}^{\frac{1}{3}}K_{2}K_{3}^{\frac{2}{3}}K_{4}K_{5}^{\frac{1}{3}}K_{6}^{\frac{2}{3}},&
(m^{+})^{6}_{6}=K_{1}^{\frac{1}{3}}K_{2}K_{3}^{\frac{2}{3}}K_{4}K_{5}^{\frac{1}{3}}K_{6}^{\frac{2}{3}},
\\
(m^{+})^{6}_{7}=(q^{-1}-q)E_{6}K_{1}^{\frac{1}{3}}K_{2}K_{3}^{\frac{2}{3}}K_{4}K_{5}^{\frac{1}{3}}K_{6}^{-\frac{1}{3}},&
(m^{+})^{7}_{7}=K_{1}^{\frac{1}{3}}K_{2}K_{3}^{\frac{2}{3}}K_{4}K_{5}^{\frac{1}{3}}K_{6}^{-\frac{1}{3}},
\\
(m^{+})^{1}_{1}=K_{1}^{\frac{4}{3}}K_{2}K_{3}^{\frac{5}{3}}K_{4}^{2}K_{5}^{\frac{4}{3}}K_{6}^{\frac{2}{3}},&
(m^{+})^{27}_{27}=K_{1}^{-\frac{2}{3}}K_{2}^{-1}K_{3}^{-\frac{4}{3}}K_{4}^{-2}K_{5}^{-\frac{5}{3}}K_{6}^{-\frac{4}{3}}.
\end{array}
\right.
$

\item In $m^{-}:$

$
\left.
\begin{array}{ll}
(m^{-})^{2}_{1}=\frac{K_{1}^{-\frac{1}{3}}K_{2}^{-1}K_{3}^{-\frac{5}{3}}K_{4}^{-2}K_{5}^{-\frac{4}{3}}K_{6}^{-\frac{2}{3}}F_{1}}{(q^{2}-1)},&
(m^{-})^{2}_{2}=K_{1}^{-\frac{1}{3}}K_{2}^{-1}K_{3}^{-\frac{5}{3}}K_{4}^{-2}K_{5}^{-\frac{4}{3}}K_{6}^{-\frac{2}{3}},
\\
(m^{-})^{3}_{2}=\frac{K_{1}^{-\frac{1}{3}}K_{2}^{-1}K_{3}^{-\frac{2}{3}}K_{4}^{-2}K_{5}^{-\frac{4}{3}}K_{6}^{-\frac{2}{3}}F_{3}}{(q^{2}-1)},&
(m^{-})^{3}_{3}=K_{1}^{-\frac{1}{3}}K_{2}^{-1}K_{3}^{-\frac{2}{3}}K_{4}^{-2}K_{5}^{-\frac{4}{3}}K_{6}^{-\frac{2}{3}},
\\
(m^{-})^{4}_{3}=\frac{K_{1}^{-\frac{1}{3}}K_{2}^{-1}K_{3}^{-\frac{2}{3}}K_{4}^{-1}K_{5}^{-\frac{4}{3}}K_{6}^{-\frac{2}{3}}F_{4}}{(q^{2}-1)},&
(m^{-})^{4}_{4}=K_{1}^{-\frac{1}{3}}K_{2}^{-1}K_{3}^{-\frac{2}{3}}K_{4}^{-1}K_{5}^{-\frac{4}{3}}K_{6}^{-\frac{2}{3}},
\\
(m^{-})^{5}_{4}=\frac{K_{1}^{-\frac{1}{3}}K_{3}^{-\frac{2}{3}}K_{4}^{-1}K_{5}^{-\frac{4}{3}}K_{6}^{-\frac{2}{3}}F_{2}}{(q^{2}-1)},&
(m^{-})^{5}_{5}=K_{1}^{-\frac{1}{3}}K_{3}^{-\frac{2}{3}}K_{4}^{-1}K_{5}^{-\frac{4}{3}}K_{6}^{-\frac{2}{3}},
\\
(m^{-})^{8}_{5}=\frac{K_{1}^{-\frac{1}{3}}K_{3}^{-\frac{2}{3}}K_{4}^{-1}K_{5}^{-\frac{1}{3}}K_{6}^{-\frac{2}{3}}F_{5}}{(q^{2}-1)},&
(m^{-})^{8}_{8}=K_{1}^{-\frac{1}{3}}K_{3}^{-\frac{2}{3}}K_{4}^{-1}K_{5}^{-\frac{1}{3}}K_{6}^{-\frac{2}{3}},
\\
(m^{-})^{7}_{6}=\frac{K_{1}^{-\frac{1}{3}}K_{2}^{-1}K_{3}^{-\frac{2}{3}}K_{4}^{-1}K_{5}^{-\frac{1}{3}}K_{6}^{\frac{1}{3}}F_{6}}{(q^{2}-1)},&
(m^{-})^{7}_{7}=K_{1}^{-\frac{1}{3}}K_{2}^{-1}K_{3}^{-\frac{2}{3}}K_{4}^{-1}K_{5}^{-\frac{1}{3}}K_{6}^{\frac{1}{3}}.
\end{array}
\right.
$
\end{itemize}

\end{lemma}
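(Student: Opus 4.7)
My plan is to mirror the two-step strategy used in Lemma \ref{leme6}. For part (2), I apply Remark \ref{remm} directly. The universal $R$-matrix of $U_h(E_6)$ factors as the product of a Cartan exponential $\exp(h\sum_{i,j}B_{ij}H_i\otimes H_j)$ and the $q$-exponentials $\prod_{\beta\in\Delta_+}\exp_{q_\beta}((1-q_\beta^{-2})E_\beta\otimes F_\beta)$. On the 27-dimensional representation $V$ of Figure 2, which is minuscule, $F_\beta$ is nilpotent of order one on every weight vector, so only the constant and linear terms of each $q$-exponential survive. Computing $(\mathrm{id}\otimes T_V)(\mathcal{R})(1\otimes v_j)$ and then $S(l^+_{ij})=(m^+)^i_j$ yields the diagonal entries as monomials in the $K_i$'s whose exponents are determined by expanding the weight $\mu_j$ in the fundamental-weight basis via the inverse Cartan matrix $(B_{ij})$, and the off-diagonal entries as $\pm(q^{-1}-q)E_\beta\cdot(m^+)^{j'}_{j'}$ whenever $E_\beta v_{j'}=v_j$. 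Since the arrow labels of Figure 2 record precisely which simple root raises each basis vector to which, the entries listed in part (2) can be read off directly from the diagram; the computation of $m^-$ is analogous.

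For part (1), the decomposition of $V\otimes V$ into three irreducible $U_q(E_6)$-summands (mirroring the classical $\mathbf{27}\otimes\mathbf{27}=\overline{\mathbf{27}}\oplus\mathbf{351}\oplus\mathbf{351}'$, of dimensions $27+351+351=729=27^2$) forces $PR_{VV}$ to have exactly three distinct eigenvalues $x_1,x_2,x_3$, so the minimal polynomial is cubic. To pin down the $x_i$, I select two small index-blocks on which $PR_{VV}$ restricts to a $2\times 2$ matrix that is directly computable from Remark \ref{R}: first, the block on the index pair $(1,2)$ of weight-adjacent basis vectors linked by $\alpha_1$, which produces both a Cartan diagonal entry $q^{(\mu_1,\mu_2)}$ and an anti-diagonal entry of size $q^{?}(q-q^{-1})$ from the $E_{\alpha_1}\otimes F_{\alpha_1}$ term; second, the extremal pair $(1,27)$ of highest and lowest weight vectors, where no positive root bridges the two sides so only the Cartan contribution $q^{(\mu_1,\mu_{27})}$ survives. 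Imposing that the corresponding entries of $(PR_{VV}-x_1 I)(PR_{VV}-x_2 I)(PR_{VV}-x_3 I)$ vanish produces three scalar equations in the elementary symmetric functions $x_1+x_2+x_3$, $x_1x_2+x_1x_3+x_2x_3$, and $x_1x_2x_3$, whose unique solution is $\{x_1,x_2,x_3\}=\{q^{4/3},q^{-2/3},-q^{-2/3}\}$.

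The main obstacle will be the careful bookkeeping of the $E_6$ weight inner products $(\mu_i,\mu_j)$ entering the Cartan factor $B_{VV}$, and the verification that all contributions of non-simple positive roots to $\mathfrak{R}$ vanish on the chosen sub-blocks. Both reduce to the observations that $F_\beta$ kills the highest-weight vector $v_1$ for every $\beta\in\Delta_+$, and that in this minuscule representation $F_\beta^2=0$ on every weight vector, so only the linear simple-root term of a single $q$-exponential can contribute to the off-diagonal part of the chosen sub-blocks. Once the sub-block entries are correctly assembled, the resulting system is linear in the elementary symmetric functions and is solved immediately; combined with the normalization of Remark \ref{rem1} (so that one eigenvalue of $PR$ equals $-1$, which fixes the normalization constant $\lambda=q^{2/3}$), this confirms the stated minimal polynomial.
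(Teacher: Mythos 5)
Your proposal is correct and follows essentially the same route as the paper: part (2) via Remark \ref{remm} using that the $27$ is minuscule, and part (1) by noting the three-fold decomposition of $V\otimes V$ and extracting three equations in the elementary symmetric functions of the eigenvalues from the $(1,2)$/$(2,1)$ and $(1,27)$/$(27,1)$ sub-blocks. The only slips are peripheral to the lemma: the paper's normalization constant is $\lambda=q^{-2/3}$ (so that $\lambda R=R_{VV}$ with $R=q^{2/3}R_{VV}$), and in Figure 2 the vector $v_1$ is the lowest-weight vector (killed by the $F_\beta$'s), not the highest-weight one.
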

\begin{proof}
The method of proof is similar to the above Lemma.
For the proof of $(1)$,
the decomposition of tensor product for this representation is
$$(000001)\otimes(000001)=(000002)\oplus(000010)\oplus(100000).$$
Namely,
$V\otimes V=V_{1}\oplus V_{2}\oplus V_{3}$,
where
$V_{i},i=1,2,3$ denotes the irreducible representation with highest weight
$2\lambda_{6},\lambda_{5},\lambda_{1}$, respectively.
This means that
matrix $PR_{VV}$ has $3$ distinct eigenvalues,
denoted by $x_{1},x_{2},x_{3}$,
and set
$\mathcal{N}=(P{R}_{VV}-x_{1}I)(P{R}_{VV}-x_{2}I)(P{R}_{VV}-x_{3}I)$.

We also consider some entries at some special rows in the matrix $\mathcal{N}$.
For example, the nonzero entries at rows $(12)$ and $(21)$ are
$
(P{R}_{VV}-x_{i}I)^{1}_{1}{}^{2}_{2}=-x_{i},
(P{R}_{VV}-x_{i}I)^{1}_{2}{}^{2}_{1}=q^{\frac{1}{3}},
(P{R}_{VV}-x_{i}I)^{2}_{1}{}^{1}_{2}=q^{\frac{1}{3}},
(P{R}_{VV}-x_{i}I)^{2}_{2}{}^{1}_{1}=q^{\frac{1}{3}}(q-q^{-1})-x_{i}.
$
Then we obtain

$
\mathcal{N}^{1}_{1}{}^{2}_{2}
=-x_{1}x_{2}x_{3}-q^{\frac{2}{3}}(x_{1}+x_{2}+x_{3})+q^{2}-1,
$

$
\mathcal{N}^{1}_{2}{}^{2}_{1}
=q^{\frac{1}{3}}(x_{1}x_{2}+x_{1}x_{3}+x_{2}x_{3})
-q^{\frac{2}{3}}(q-q^{-1})(x_{1}+x_{2}+x_{3})+q^{3}+q^{-1}-q.
$

Other special rows we consider are $(1,27)$ and $(27,1)$,
nonzero entries at these rows are
$(P{R}_{VV}-x_{i}I)^{1,}_{1,}{}^{27}_{27}=-x_{i},
(P{R}_{VV}-x_{i}I)^{1,}_{27,}{}^{27}_{1}=q^{-\frac{3}{4}},
(P{R}_{VV}-x_{i}I)^{27,}_{27,}{}^{1}_{1}=-x_{i},
(P{R}_{VV}-x_{i}I)^{27,}_{1,}{}^{1}_{27}=q^{-\frac{3}{4}},
(P{R}_{VV}-x_{i}I)^{27,}_{c_{0},}{}^{1}_{d_{0}}=Y_{c_{0}}
$
for some columns $(a_{0},b_{0})$,
where
$
1<a_{0},b_{0}<27,
$
and
$(P{R}_{VV}-x_{i}I)^{a_{0},}_{1,}{}^{b_{0}}_{27}=0.$
With these entries,
we obtain
$$
\mathcal{N}^{1,}_{1,}{}^{27}_{27}=-x_{1}x_{2}x_{3}-q^{-\frac{4}{3}}(x_{1}+x_{2}+x_{3}).
$$

In view of the value of
$
\mathcal{N}^{1}_{1}{}^{2}_{2},
\mathcal{N}^{1}_{2}{}^{2}_{1},
\mathcal{N}^{1,}_{1,}{}^{27}_{27}
$,
we obtain the following equations

$
\left\{
\begin{array}{l}
-x_{1}x_{2}x_{3}-q^{\frac{2}{3}}(x_{1}+x_{2}+x_{3})+q^{2}-1=0,\\
q^{\frac{1}{3}}(x_{1}x_{2}+x_{1}x_{3}+x_{2}x_{3})
-q^{\frac{2}{3}}(q-q^{-1})(x_{1}+x_{2}+x_{3})+q^{3}+q^{-1}-q=0,\\
-x_{1}x_{2}x_{3}-q^{-\frac{4}{3}}(x_{1}+x_{2}+x_{3})=0.
\end{array}
\right.
$

Solving it, we get that these eigenvalues are
$
q^{\frac{4}{3}},
\pm q^{-\frac{2}{3}}.
$
Then we obtain the minimal polynomial of $PR_{VV}$ is
$(P{R}_{VV}-q^{\frac{4}{3}}I)(P{R}_{VV}-q^{-\frac{2}{3}}I)(P{R}_{VV}+q^{-\frac{2}{3}}I)=0.$
\end{proof}
Setting $R=q^{\frac{2}{3}}R_{VV},R^{\prime}=RPR-(q^{2}+1)R+(q^{2}+1)P$,
we have $(PR+I)(PR^{\prime}-I)=0.$
In view of these results,
we obtain the following
\begin{theorem}
With the quantum normalization constant $\lambda=q^{-\frac{2}{3}}$,
identify $e^{27}, f_{27}$ and $(m^{+})^{27}_{27}c^{-1}$ with the additional simple root vectors $E_{7}, F_{7}$ and the group-element $K_{7}$, respectively,
then the new quantum group
$U(V^{\vee}(R^{\prime},R_{21}^{-1}),\widetilde{U_{q}^{ext}(E_{6})},V(R^{\prime},R))$
is the quantized enveloping algebra $U_{q}(E_{7})$ with $K_{i}^{\frac{1}{3}}$ adjoined.
\end{theorem}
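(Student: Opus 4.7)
The plan is to mirror the proof just given for $U_q(E_6)$, using the 27-dimensional representation data assembled in Lemma \ref{leme7}. First I would invoke Theorem \ref{cor1} with $\lambda=q^{-\frac{2}{3}}$ (so that $\lambda R=R_{VV}$) and the $R'$ constructed from the minimal polynomial $(PR+I)(PR'-I)=0$. Under the identifications $E_7=e^{27}$, $F_7=f_{27}$, $K_7=(m^+)^{27}_{27}c^{-1}$, the cross relation $[e^{27},f_{27}]=\frac{(m^+)^{27}_{27}c^{-1}-c(m^-)^{27}_{27}}{q_\ast-q_\ast^{-1}}$, together with the coproducts $\Delta e^{27}=e^a\otimes(m^+)^{27}_ac^{-1}+1\otimes e^{27}$ and $\Delta f_{27}=f_{27}\otimes1+c(m^-)^a_{27}\otimes f_a$, supplies the defining $(E_7,F_7,K_7)$ relations of $U_q(E_7)$ after normalizing so that $q_\ast=q$.

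Next I would show $[E_7,F_j]=0$ for $1\le j\le 6$. Inspection of Lemma \ref{leme7}(2) shows each minor-diagonal entry $(m^-)^{i+1}_i$ is a scalar multiple of $(m^-)^{i+1}_{i+1}F_j$ for the appropriate $j$. Applying the cross relations $(m^-)^{i+1}_i e^{27}=\lambda R^{27,i+1}_{27,i+1}e^{27}(m^-)^{i+1}_i$ and the analogous one for $(m^-)^{i+1}_{i+1}$, one concludes exactly as in the $E_6$ case that $[E_7,F_j]=0$ whenever $j\neq 7$, hence $[E_7,F_j]=\delta_{j7}\frac{K_7-K_7^{-1}}{q-q^{-1}}$. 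The Cartan-type relations $E_7K_i=q^{a_{7i}}K_iE_7$ follow from $e^{27}(m^+)^j_j=\lambda R^{j,27}_{j,27}(m^+)^j_je^{27}$ combined with the explicit Gauss-style expressions for the $(m^+)^j_j$; the dual relations $K_7E_i=q^{a_{7i}}E_iK_7$ are obtained directly by pushing $E_i$ through the monomial $K_1^{-\frac{2}{3}}K_2^{-1}K_3^{-\frac{4}{3}}K_4^{-2}K_5^{-\frac{5}{3}}K_6^{-\frac{4}{3}}c^{-1}$ and collecting $q$-powers using the $E_6$ Cartan data. The net effect reproduces the $E_7$ Cartan matrix, so $K_7$ commutes with $E_i$ for $i\ne 6$ and satisfies $K_7E_6=qE_6K_7$.

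Only the $q$-Serre relation between $E_7$ and $E_6$ remains nontrivial. Following the blueprint of the $E_6$ proof, the key is to extract a new element $e^{26}$ from the off-diagonal cross relation $e^{27}(m^+)^{26}_{27}=\lambda\bigl(q(m^+)^{26}_{27}e^{27}+q(q-q^{-1})(m^+)^{27}_{27}e^{26}\bigr)$, which gives $e^{26}=qe^{27}E_6-E_6e^{27}$. The braided vector algebra relation $e^{26}e^{27}=R'{}^{27,26}_{ab}e^ae^b$ collapses to $e^{27}e^{26}=qe^{26}e^{27}$, and back-substitution yields $E_7^2E_6-[2]_qE_7E_6E_7+E_6E_7^2=0$. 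The companion relation $E_6^2E_7-[2]_qE_6E_7E_6+E_7E_6^2=0$ is obtained from $e^{26}(m^+)^{26}_{27}=\lambda R^{26,26}_{a,b}(m^+)^a_{27}e^b$ to deduce $e^{26}E_6=qE_6e^{26}$, and then combining with $e^{26}=qe^{27}E_6-E_6e^{27}$ as before. The $F$-side relations are established symmetrically using the $m^-$ data.

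The main obstacle I expect lies in the bookkeeping needed to identify $e^{26}$ correctly, namely verifying from Figure 2 that the index $26$ really corresponds to the weight reached from the lowest weight by the action of $F_6$, and thereafter keeping track of the various $q$-scalars arising from $R_{VV}$ entries involving mixed $K_i$ monomials; once this index identification is pinned down, the remaining computations are routine repetitions of the braided-algebra manipulations already exhibited in the $U_q(E_6)$ proof.
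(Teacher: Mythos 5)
Your proposal is correct and follows essentially the same route as the paper: invoke Theorem \ref{cor1} for the $(E_7,F_7,K_7)$ relations and coproducts, kill $[E_7,F_j]$ for $j\le 6$ via the minor entries of $m^-$, read off the Cartan relations from the diagonal entries of $m^{\pm}$, and derive the only nontrivial $q$-Serre pair by extracting an auxiliary generator $e^{26}$. The one substantive deviation is where you extract $e^{26}$: you use the cross relation for $(m^{+})^{26}_{27}$, whereas the paper uses $(m^{+})^{6}_{7}$ --- which is the entry actually computed in Lemma \ref{leme7} --- and obtains $e^{26}=q^{-1}E_{6}e^{27}-e^{27}E_{6}$ rather than your $e^{26}=qe^{27}E_{6}-E_{6}e^{27}$; the two candidates differ only by the unit $-q$, so both collapse to the same pair of $q$-Serre relations once $e^{27}e^{26}=qe^{26}e^{27}$ and $e^{26}E_{6}=qE_{6}e^{26}$ are in hand, but your route requires computing the extra data $(m^{+})^{26}_{27}$ and the $R$-entries at the index pair $(26,27)$, none of which the lemma supplies. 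Two smaller points you gloss over: first, not every $F_j$ (resp.\ $E_j$) sits in an adjacent minor entry --- $F_5$ lives in $(m^{-})^{8}_{5}$ and $E_5$ in $(m^{+})^{4}_{6}$ with $(m^{+})^{5}_{6}=0$ --- so the verifications of $[E_7,F_5]=0$ and $E_7E_5=E_5E_7$ need the slightly different index pattern that the paper spells out explicitly rather than being instances of the generic adjacent-minor argument; second, $f_{27}$ is the highest (not lowest) weight vector of the $27$-dimensional module, and $f_{26}=F_{6}f_{27}$, which is the index identification you flag as the main bookkeeping risk.
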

\begin{proof}
$[E_{7},F_{7}]=\frac{K_{7}-K_{7}^{-1}}{q-q_{-1}}$,
$\Delta(E_{7})=E_{7}\otimes K_{7}+1\otimes E_{7}$,
and
$\Delta(F_{7})=F_{7}\otimes 1+K_{7}^{-1}\otimes F_{7}$
can by obtained directly by Theorem \ref{cor1}.
We will explore the relations between $E_{7}$ and $F_{i}$.
$F_{i}$'s appear in the minor entries $(m^{-})^{i+1}_{i}$ of matrix $m^{-}$,
then according to the cross relations in Theorem \ref{cor1},
we obtain

$
\left.
\begin{array}{l}
(m^{-})^{i+1}_{i}=(q^{2}-1)(m^{-})^{i+1}_{i+1}F_{j},\\
(m^{-})^{8}_{5}=(q^{2}-1)(m^{-})^{8}_{8}F_{5},\\
(m^{-})^{i+1}_{i}e^{27}=\lambda R^{27}_{27}{}^{i+1}_{i+1}e^{27}(m^{-})^{i+1}_{i},\\
(m^{-})^{8}_{5}e^{27}=\lambda R^{27}_{27}{}^{8}_{8}e^{27}(m^{-})^{8}_{5},\\
(m^{-})^{i+1}_{i+1}e^{27}=\lambda R^{27}_{27}{}^{i+1}_{i+1}e^{27}(m^{-})^{i+1}_{i+1},
\end{array}
\right\}
\Longrightarrow
E_{7}F_{j}=F_{j}E_{7}
\Longrightarrow
[E_{7},F_{j}]=0,\quad 1\leq j\leq 6.
$

Namely,
$[E_{7},F_{i}]=\delta_{7j}\frac{K_{7}-K_{7}^{-1}}{q-q_{-1}}$.
Under the above identification,
$
E_{7}K_{7}=e^{27}(m^{+})^{27}_{27}c^{-1}=\lambda R^{27}_{27}{}^{27}_{27}(m^{+})^{27}_{27}e^{27}c^{-1}
=R^{27}_{27}{}^{27}_{27}(m^{+})^{27}_{27}c^{-1}e^{27}
=q^{\frac{2}{3}}R_{VV}{}^{27}_{27}{}^{27}_{27}(m^{+})^{27}_{27}c^{-1}e^{27}
=q^{2}(m^{+})^{27}_{27}c^{-1}e^{27}=q^{2}K_{7}E_{7}.
$
According to the cross relation $e^{27}(m^{+})^{i}_{i}=\lambda R^{i}_{i}{}^{27}_{27}(m^{+})^{i}_{i}e^{27}$ in Theorem \ref{cor1},
combining with
$
(m^{+})^{2}_{2}K_{1}=(m^{+})^{1}_{1},
$
$
(m^{+})^{3}_{3}K_{3}=(m^{+})^{2}_{2},
$
$
(m^{+})^{4}_{4}K_{4}=(m^{+})^{3}_{3},
$
$
(m^{+})^{5}_{5}K_{2}=(m^{+})^{4}_{4},
$
$
(m^{+})^{7}_{7}K_{6}=(m^{+})^{5}_{5},
$
and
$R^{i}_{i}{}^{27}_{27}=1,1\leq i\leq 6$,
$
R^{7}_{7}{}^{27}_{27}=q,
$
we obtain the relations between $E_{7}$ and $K_{i}$: 
$
\left\{
\begin{array}{l}
E_{7}K_{i}=K_{i}E_{7}, \ 1\leq i\leq 5,\\
E_{7}K_{6}=q^{-1}K_{6}E_{7}.
\end{array}
\right.
$

The relations between $K_{7}$ and others $E_{i}$'s can be obtained by the following equalities.
$$
E_{1}K_{7}=E_{1}K_{1}^{-\frac{2}{3}}K_{2}^{-1}K_{3}^{-\frac{4}{3}}K_{4}^{-2}K_{5}^{-\frac{5}{3}}K_{6}^{-\frac{4}{3}}c^{-1}
=q^{-\frac{4}{3}}q^{\frac{4}{3}}K_{1}^{-\frac{2}{3}}K_{2}^{-1}K_{3}^{-\frac{4}{3}}K_{4}^{-2}K_{5}^{-\frac{5}{3}}K_{6}^{-\frac{4}{3}}c^{-1}E_{1}
=K_{7}E_{1},
$$
$$
E_{2}K_{7}=E_{2}K_{1}^{-\frac{2}{3}}K_{2}^{-1}K_{3}^{-\frac{4}{3}}K_{4}^{-2}K_{5}^{-\frac{5}{3}}K_{6}^{-\frac{4}{3}}c^{-1}
=q^{-2}q^{2}K_{1}^{-\frac{2}{3}}K_{2}^{-1}K_{3}^{-\frac{4}{3}}K_{4}^{-2}K_{5}^{-\frac{5}{3}}K_{6}^{-\frac{4}{3}}c^{-1}E_{2}
=K_{7}E_{2},
$$
$$
E_{3}K_{7}=E_{3}K_{1}^{-\frac{2}{3}}K_{2}^{-1}K_{3}^{-\frac{4}{3}}K_{4}^{-2}K_{5}^{-\frac{5}{3}}K_{6}^{-\frac{4}{3}}c^{-1}
=q^{\frac{2}{3}}q^{-\frac{8}{3}}q^{2}K_{1}^{-\frac{2}{3}}K_{2}^{-1}K_{3}^{-\frac{4}{3}}K_{4}^{-2}K_{5}^{-\frac{5}{3}}K_{6}^{-\frac{4}{3}}c^{-1}E_{3}
=K_{7}E_{3},
$$
$$
E_{4}K_{7}=E_{4}K_{1}^{-\frac{2}{3}}K_{2}^{-1}K_{3}^{-\frac{4}{3}}K_{4}^{-2}K_{5}^{-\frac{5}{3}}K_{6}^{-\frac{4}{3}}c^{-1}
=qq^{\frac{4}{3}}q^{-4}q^{\frac{5}{3}}K_{1}^{-\frac{2}{3}}K_{2}^{-1}K_{3}^{-\frac{4}{3}}K_{4}^{-2}K_{5}^{-\frac{5}{3}}K_{6}^{-\frac{4}{3}}c^{-1}E_{4}
=K_{7}E_{4},
$$
$$
E_{5}K_{7}=E_{5}K_{1}^{-\frac{2}{3}}K_{2}^{-1}K_{3}^{-\frac{4}{3}}K_{4}^{-2}K_{5}^{-\frac{5}{3}}K_{6}^{-\frac{4}{3}}c^{-1}
=q^{2}q^{-\frac{10}{3}}q^{\frac{4}{3}}K_{1}^{-\frac{2}{3}}K_{2}^{-1}K_{3}^{-\frac{4}{3}}K_{4}^{-2}K_{5}^{-\frac{5}{3}}K_{6}^{-\frac{4}{3}}c^{-1}E_{5}
=K_{7}E_{5},
$$
$$
E_{6}K_{7}=E_{6}K_{1}^{-\frac{2}{3}}K_{2}^{-1}K_{3}^{-\frac{4}{3}}K_{4}^{-2}K_{5}^{-\frac{5}{3}}K_{6}^{-\frac{4}{3}}c^{-1}
=q^{\frac{5}{3}}q^{-\frac{8}{3}}K_{1}^{-\frac{2}{3}}K_{2}^{-1}K_{3}^{-\frac{4}{3}}K_{4}^{-2}K_{5}^{-\frac{5}{3}}K_{6}^{-\frac{4}{3}}c^{-1}E_{5}
=q^{-1}K_{7}E_{6}.
$$

We will describe the $q$-Serre relations between $e^{27}$ and $E_{i}$.
Since $E_{i}$'s appear in the minor entries $(m^{+})^{i}_{i+1}$ of matrix $m^{+}$,
we have

$
\left.
\begin{array}{l}
(m^{+})^{i}_{i+1}=-(q-q^{-1})E_{j}(m^{+})^{i+1}_{i+1},\\
e^{27}(m^{+})^{i}_{i+1}=\lambda R^{i}_{i}{}^{27}_{27}(m^{+})^{i}_{i+1}e^{27},i=1,\cdots,4,\\
e^{27}(m^{+})^{i+1}_{i+1}=\lambda R^{i+1}_{i+1}{}^{27}_{27}(m^{+})^{i+1}_{i+1}e^{27} ,
\end{array}
\right\}
\Longrightarrow~
E_{7}E_{j}=E_{j}E_{7},j=1,2,3,4.
$

$
\left.
\begin{array}{l}
(m^{+})^{4}_{6}=-(q-q^{-1})E_{5}(m^{+})^{6}_{6},\\
e^{27}(m^{+})^{4}_{6}=\lambda R^{4}_{a}{}^{27}_{b}(m^{+})^{a}_{6}e^{b},\\
e^{27}(m^{+})^{6}_{6}=\lambda R^{6}_{6}{}^{27}_{27}(m^{+})^{6}_{6}e^{27},\\
(m^{+})^{5}_{6}=0,
R^{4}_{6}{}^{27}_{25}=0,
R^{4}_{4}{}^{27}_{27}=R^{6}_{6}{}^{27}_{27}=1,
\end{array}
\right\}
\Longrightarrow~
E_{7}E_{5}=E_{5}E_{7}.
$

$
\left.
\begin{array}{l}
(m^{+})^{6}_{7}=-(q-q^{-1})E_{6}(m^{+})^{7}_{7},\\
e^{27}(m^{+})^{6}_{7}=\lambda (m^{+})^{6}_{7}e^{27}+\lambda (q-q^{-1})(m^{+})^{7}_{7}e^{26},\\
e^{27}(m^{+})^{7}_{7}=\lambda R^{7}_{7}{}^{27}_{27}(m^{+})^{7}_{7}e^{27}=\lambda q(m^{+})^{7}_{7}e^{27},\\
e^{26}(m^{+})^{7}_{7}=\lambda R^{7}_{7}{}^{26}_{26}(m^{+})^{7}_{7}e^{27}=\lambda (m^{+})^{7}_{7}e^{26},
\end{array}
\right\}
\Longrightarrow
e^{26}=q^{-1}E_{6}e^{27}-e^{27}E_{6}.
$

According to
$
R^{\prime}{}^{27,}_{27,}{}^{26}_{26}=-2q,
$
and
$
R^{\prime}{}^{27,}_{26,}{}^{26}_{27}=2q^{2}+1,
$
then
$
e^{26}e^{27}=R^{\prime}{}^{27,}_{a,}{}^{26}_{b}e^{a}e^{b}
=-2qe^{27}e^{26}+(2q^{2}+1)e^{26}e^{27}
~\Longrightarrow~e^{27}e^{26}=qe^{26}e^{27}.
$
Combing with the above relation $e^{26}=q^{-1}E_{6}e^{27}-e^{27}E_{6},$
we obtain
$$
(E_{7})^{2}E_{6}-(q+q^{-1})E_{7}E_{6}E_{7}+E_{6}(E_{7})^{2}=0.
$$

On the other hand,
we need to know the relation between $e^{26}$ and $E_{6}$,
which can be explored by the following relations

$
\left\{
\begin{array}{l}
(m^{+})^{6}_{7}=(q-q^{-1})E_{6}(m^{+})^{7}_{7},\\
e^{26}(m^{+})^{6}_{7}=\lambda R^{6}_{6}{}^{26}_{26}(m^{+})^{6}_{7}e^{26}=\lambda q(m^{+})^{6}_{7}e^{26},\\
e^{26}(m^{+})^{7}_{7}=\lambda R^{7}_{7}{}^{26}_{26}(m^{+})^{7}_{7}e^{26}=\lambda (m^{+})^{7}_{7}e^{26}.
\end{array}
\right.
\Longrightarrow
 e^{26}E_{6}=qE_{6}e^{26}.
$

Combing with the above relation $e^{26}=q^{-1}E_{6}e^{27}-e^{27}E_{6}$ again,
we obtain
$$
(E_{6})^{2}E_{7}-(q+q^{-1})E_{6}E_{7}E_{6}+E_{7}(E_{6})^{2}=0.
$$
The relations between $f_{27}$ and others $F_{i},K_{i}$'s can be obtained in a similar way.
With these relations,
we prove that the new quantum group is $U_{q}(E_{7})$.
\end{proof}

\subsection{Inductive construction of $U_{q}(E_{8})$}
We choose the $56$-dimensional minimal fundamental representation $T_{V}$ of $U_{q}(E_{7})$ to construct $U_{q}(E_{8})$.
The representation is given by the following Figure $3$,
which also appeared in \cite{KKM}.
This representation also satisfies
$E_{\beta}(f_{m})=f_{n} \Longleftrightarrow F_{\beta}(f_{n})=f_{m}$
and
$E_{i}^{2}$ is zero action.
Moreover,
the data $R,R^{\prime}$,
and the entries we need in $m^{\pm}$
are given by the following Lemma.
\begin{lemma}
$(1)$ The minimal polynomial of the braiding $PR_{VV}$ is
$$(P{R}_{VV}+q^{-\frac{1}{2}}I)(P{R}_{VV}+q^{\frac{1}{2}}I)(P{R}_{VV}-q^{\frac{1}{2}}I)(P{R}_{VV}-q^{\frac{3}{2}}I)=0.$$
Thus setting $R=q^{\frac{1}{2}}R_{VV},R^{\prime}=-q^{-4}R(P{R})^{2}+q^{-2}RP{R}+q^{-2}R$,
we have $(PR+I)(PR^{\prime}-I)=0$.

$(2)$  The entries we need in $m^{\pm}$ are listed as follows
\begin{itemize}
  \item In $m^{+}$:

$
\left.\begin{array}{ll}
(m^{+})^{1}_{2}=-(q-q^{-1})E_{7}K_{1}K_{2}^{\frac{3}{2}}K_{3}^{2}K_{4}^{3}K_{5}^{\frac{5}{2}}K_{6}^{2}K_{7}^{\frac{1}{2}},&
(m^{+})^{2}_{2}=K_{1}K_{2}^{\frac{3}{2}}K_{3}^{2}K_{4}^{3}K_{5}^{\frac{5}{2}}K_{6}^{2}K_{7}^{\frac{1}{2}},
\\
(m^{+})^{2}_{3}=-(q-q^{-1})E_{6}K_{1}K_{2}^{\frac{3}{2}}K_{3}^{2}K_{4}^{3}K_{5}^{\frac{5}{2}}K_{6}K_{7}^{\frac{1}{2}},&
(m^{+})^{3}_{3}=K_{1}K_{2}^{\frac{3}{2}}K_{3}^{2}K_{4}^{3}K_{5}^{\frac{5}{2}}K_{6}K_{7}^{\frac{1}{2}},
\\
(m^{+})^{3}_{4}=-(q-q^{-1})E_{5}K_{1}K_{2}^{\frac{3}{2}}K_{3}^{2}K_{4}^{3}K_{5}^{\frac{3}{2}}K_{6}K_{7}^{\frac{1}{2}},&
(m^{+})^{4}_{4}=K_{1}K_{2}^{\frac{3}{2}}K_{3}^{2}K_{4}^{3}K_{5}^{\frac{3}{2}}K_{6}K_{7}^{\frac{1}{2}},
\\
(m^{+})^{4}_{5}=-(q-q^{-1})E_{4}K_{1}K_{2}^{\frac{3}{2}}K_{3}^{2}K_{4}^{2}K_{5}^{\frac{3}{2}}K_{6}K_{7}^{\frac{1}{2}},&
(m^{+})^{5}_{5}=K_{1}K_{2}^{\frac{3}{2}}K_{3}^{2}K_{4}^{2}K_{5}^{\frac{3}{2}}K_{6}K_{7}^{\frac{1}{2}},
\\
(m^{+})^{5}_{6}=-(q-q^{-1})E_{3}K_{1}K_{2}^{\frac{3}{2}}K_{3}K_{4}^{2}K_{5}^{\frac{3}{2}}K_{6}K_{7}^{\frac{1}{2}},&
(m^{+})^{6}_{6}=K_{1}K_{2}^{\frac{3}{2}}K_{3}K_{4}^{2}K_{5}^{\frac{3}{2}}K_{6}K_{7}^{\frac{1}{2}},
\\
(m^{+})^{5}_{7}{=}(q^{-1}-q)E_{2}K_{1}K_{2}^{\frac{1}{2}}K_{3}^{2}K_{4}^{2}K_{5}^{\frac{3}{2}}K_{6}K_{7}^{\frac{1}{2}},&
(m^{+})^{7}_{7}{=}K_{1}K_{2}^{\frac{1}{2}}K_{3}^{2}K_{4}^{2}K_{5}^{\frac{3}{2}}K_{6}K_{7}^{\frac{1}{2}},
(m^{+})^{6}_{7}{=}0,
\\
(m^{+})^{6}_{8}=-(q-q^{-1})E_{1}K_{2}^{\frac{3}{2}}K_{3}K_{4}^{2}K_{5}^{\frac{3}{2}}K_{6}K_{7}^{\frac{1}{2}},&
(m^{+})^{8}_{8}=K_{2}^{\frac{3}{2}}K_{3}K_{4}^{2}K_{5}^{\frac{3}{2}}K_{6}K_{7}^{\frac{1}{2}},~~
(m^{+})^{7}_{8}=0,
\\
(m^{+})^{1}_{1}=K_{1}K_{2}^{\frac{3}{2}}K_{3}^{2}K_{4}^{3}K_{5}^{\frac{5}{2}}K_{6}^{2}K_{7}^{\frac{3}{2}},&
(m^{+})^{56}_{56}=K_{1}^{-1}K_{2}^{-\frac{3}{2}}K_{3}^{-2}K_{4}^{-3}K_{5}^{-\frac{5}{2}}K_{6}^{-2}K_{7}^{-\frac{3}{2}};
\end{array}
\right.
$

\item In $m^{-}$:

$
\left.\begin{array}{ll}
(m^{-})^{2}_{1}=\frac{K_{1}^{-1}K_{2}^{-\frac{3}{2}}K_{3}^{-2}K_{4}^{-3}K_{5}^{-\frac{5}{2}}K_{6}^{-2}K_{7}^{-\frac{1}{2}}F_{7}}{q^{2}-1},&
(m^{-})^{2}_{2}=K_{1}^{-1}K_{2}^{-\frac{3}{2}}K_{3}^{-2}K_{4}^{-3}K_{5}^{-\frac{5}{2}}K_{6}^{-2}K_{7}^{-\frac{1}{2}},
\\
(m^{-})^{3}_{2}=\frac{K_{1}^{-1}K_{2}^{-\frac{3}{2}}K_{3}^{-2}K_{4}^{-3}K_{5}^{-\frac{5}{2}}K_{6}^{-1}K_{7}^{-\frac{1}{2}}F_{6}}{q^{2}-1},&
(m^{-})^{3}_{3}=K_{1}^{-1}K_{2}^{-\frac{3}{2}}K_{3}^{-2}K_{4}^{-3}K_{5}^{-\frac{5}{2}}K_{6}^{-1}K_{7}^{-\frac{1}{2}},
\\
(m^{-})^{4}_{3}=\frac{K_{1}^{-1}K_{2}^{-\frac{3}{2}}K_{3}^{-2}K_{4}^{-3}K_{5}^{-\frac{3}{2}}K_{6}^{-1}K_{7}^{-\frac{1}{2}}F_{5}}{q^{2}-1},&
(m^{-})^{4}_{4}=K_{1}^{-1}K_{2}^{-\frac{3}{2}}K_{3}^{-2}K_{4}^{-3}K_{5}^{-\frac{3}{2}}K_{6}^{-1}K_{7}^{-\frac{1}{2}},
\\
(m^{-})^{5}_{4}=\frac{K_{1}^{-1}K_{2}^{-\frac{3}{2}}K_{3}^{-2}K_{4}^{-2}K_{5}^{-\frac{3}{2}}K_{6}^{-1}K_{7}^{-\frac{1}{2}}F_{4}}{q^{2}-1},&
(m^{-})^{5}_{5}=K_{1}^{-1}K_{2}^{-\frac{3}{2}}K_{3}^{-2}K_{4}^{-2}K_{5}^{-\frac{3}{2}}K_{6}^{-1}K_{7}^{-\frac{1}{2}},
\\
(m^{-})^{6}_{5}=\frac{K_{1}^{-1}K_{2}^{-\frac{3}{2}}K_{3}^{-1}K_{4}^{-2}K_{5}^{-\frac{3}{2}}K_{6}^{-1}K_{7}^{-\frac{1}{2}}F_{3}}{q^{2}-1},&
(m^{-})^{6}_{6}=K_{1}^{-1}K_{2}^{-\frac{3}{2}}K_{3}^{-1}K_{4}^{-2}K_{5}^{-\frac{3}{2}}K_{6}^{-1}K_{7}^{-\frac{1}{2}},
\\
(m^{-})^{8}_{6}=\frac{K_{2}^{-\frac{3}{2}}K_{3}^{-1}K_{4}^{-2}K_{5}^{-\frac{3}{2}}K_{6}^{-1}K_{7}^{-\frac{1}{2}}F_{1}}{q^{2}-1},&
(m^{-})^{8}_{8}=K_{2}^{-\frac{3}{2}}K_{3}^{-1}K_{4}^{-2}K_{5}^{-\frac{3}{2}}K_{6}^{-1}K_{7}^{-\frac{1}{2}},
(m^{-})^{7}_{6}=0,
\\
(m^{-})^{10}_{8}=\frac{K_{2}^{-\frac{1}{2}}K_{3}^{-1}K_{4}^{-2}K_{5}^{-\frac{3}{2}}K_{6}^{-1}K_{7}^{-\frac{1}{2}}F_{2}}{q^{2}-1},&
(m^{-})^{10}_{10}=K_{2}^{-\frac{1}{2}}K_{3}^{-1}K_{4}^{-2}K_{5}^{-\frac{3}{2}}K_{6}^{-1}K_{7}^{-\frac{1}{2}},
(m^{-})^{9}_{8}=0,
\\
(m^{-})^{1}_{1}=K_{1}^{-1}K_{2}^{-\frac{3}{2}}K_{3}^{-2}K_{4}^{-3}K_{5}^{-\frac{5}{2}}K_{6}^{-2}K_{7}^{-\frac{3}{2}},&
(m^{-})^{56}_{56}=K_{1}K_{2}^{\frac{3}{2}}K_{3}^{2}K_{4}^{3}K_{5}^{\frac{5}{2}}K_{6}^{2}K_{7}^{\frac{3}{2}}.
\end{array}
\right.$
\end{itemize}
%\end{enumerate}
\end{lemma}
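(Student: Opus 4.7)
The plan is to follow the template established in Lemmas \ref{leme6} and \ref{leme7}, with the new complication that the minimal polynomial is now of degree four rather than three, so a fourth independent equation in the unknown eigenvalues $x_1,x_2,x_3,x_4$ must be extracted. The starting observation is that the tensor square of the $56$-dimensional minimal fundamental representation of $U_q(E_7)$ decomposes into exactly four irreducible summands (with highest weights $2\lambda_7$, $\lambda_6$, $\lambda_1$, and the trivial one $0$, matching the dimension count $56^2 = 1463+1539+133+1$). By general principles the braiding $PR_{VV}$ acts as a distinct scalar on each summand, giving four distinct eigenvalues and hence a minimal polynomial of degree four; call these eigenvalues $x_1,x_2,x_3,x_4$ and set $\mathcal N=\prod_{i=1}^{4}(PR_{VV}-x_iI)$.

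To pin down the $x_i$'s I would compute, using the weight-raising/lowering structure read off from Figure $3$ together with Remark \ref{R}, the explicit nonzero entries of $PR_{VV}-x_iI$ in the three (or four) privileged rows, namely $(1,2)/(2,1)$ at the top of the weight lattice, $(1,56)/(56,1)$ corresponding to the extremal weight pair, and if needed an intermediate pair such as $(1,3)/(3,1)$ or $(2,3)/(3,2)$. In every such row the matrix $PR_{VV}$ has a very sparse structure because $E_\beta^2=0$ and the braiding is upper-triangular in the chosen weight basis; the same block-by-block cascade used in the $E_6$ and $E_7$ cases then produces closed-form expressions for $\mathcal N{}^{1}_{1}{}^{2}_{2}$, $\mathcal N{}^{1}_{2}{}^{2}_{1}$ and $\mathcal N{}^{1,}_{1,}{}^{56}_{56}$ as polynomials in the elementary symmetric functions $e_1,e_2,e_3,e_4$ of the $x_i$'s. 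Since $\mathcal N=0$ these expressions vanish, producing three equations, and a fourth equation comes either from $\mathcal N{}^{1,}_{56,}{}^{56}_{1}$ or from a further intermediate pair. Solving this system, I expect the solution $\{x_1,x_2,x_3,x_4\}=\{-q^{-1/2},-q^{1/2},q^{1/2},q^{3/2}\}$ to fall out, which gives the claimed minimal polynomial; the data $R=q^{1/2}R_{VV}$ and $R'=-q^{-4}R(PR)^2+q^{-2}RPR+q^{-2}R$ then satisfy $(PR+I)(PR'-I)=0$, consistent with Remark \ref{rem1} applied to the eigenvalue $-q^{-1/2}$ normalised to $-1$.

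For part (2), the entries of $m^{\pm}$ are extracted mechanically from Remark \ref{remm}: the $L^{\pm}$-functionals are read off from $(\mathrm{id}\otimes T_V)(\mathcal R)$ and $(T_V\otimes\mathrm{id})(\mathcal R^{-1})$, the Cartan part contributes the monomials in the $K_i^{\pm \frac12}$ via the inverse $(B_{ij})$ of $(d_j^{-1}a_{ij})$ for $E_7$, and the single root-vector factor $\exp_{q_\beta}((1-q_\beta^{-2})(E_\beta\otimes F_\beta))$ contributes exactly one $E_i$ or $F_i$ in each off-diagonal entry $(m^+)^i_{i+1}$ or $(m^-)^{i+1}_i$ listed. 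The identifications $(m^+)^i_{i+1}=-(q-q^{-1})E_{\sigma(i)}(m^+)^{i+1}_{i+1}$ and $(m^-)^{i+1}_i=(q^2-1)^{-1}(m^-)^{i+1}_{i+1}F_{\sigma(i)}$, with $\sigma$ the labeling read from Figure $3$, together with the zero entries $(m^+)^6_7=(m^+)^7_8=0$ and $(m^-)^7_6=(m^-)^9_8=0$ forced by the branching of the Dynkin diagram of $E_7$, then complete the list.

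The main obstacle I anticipate is the fourth equation: in the $E_6$ and $E_7$ cases three privileged rows sufficed because the minimal polynomial had degree three, and the entry $\mathcal N{}^{1,}_{1,}{}^{n,n}$ was zero automatically by the structure of $PR_{VV}$ at the bottom of the weight lattice. For $E_8$-induction we need one more independent linear relation in $e_1,\ldots,e_4$, which forces a careful choice of an intermediate row for which the multiplication cascade can still be computed explicitly; I would try the pair $(1,3)/(3,1)$ first, since there the only nontrivial contribution to $PR_{VV}$ comes from a single simple root and the computation is structurally identical to the $(1,2)/(2,1)$ case but weighted by a different power of $q$. Once four independent polynomial equations are in hand, solving for the $x_i$'s is routine.
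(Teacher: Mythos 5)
Your overall strategy for part (1) is the paper's: four irreducible summands in $V\otimes V$ give four eigenvalues $x_{1},\dots,x_{4}$ of $PR_{VV}$, which are pinned down by extracting linear equations in the elementary symmetric functions $\Delta_{1},\dots,\Delta_{4}$ from carefully chosen $2\times2$ blocks; part (2) is indeed just Remark \ref{remm}. The gap is exactly where you flagged uncertainty, and it is worse than you anticipate: not only do you lack a fourth equation, but your proposed \emph{third} equation is wrong. The extremal pair $(1,56)/(56,1)$ does not behave as in Lemmas \ref{leme6} and \ref{leme7}. There the product has only three factors, and the unidentified entries $Y_{a_{0}}$ of row $(n,1)$ drop out of $\mathcal N^{1,}_{1,}{}^{n}_{n}$ because every path through a column $(a_{0},b_{0})$ ends with a vanishing factor. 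With four factors there is room for a path of the form $(1,56)\to(56,1)\to(a_{0},b_{0})\to(56,1)\to(1,56)$, whose contribution involves the unknown $Y_{a_{0}}$'s and does not vanish. One can see this must happen without computing anything: the naive cascade would give $\Delta_{4}+q^{-3}\Delta_{2}+q^{-6}=0$ (here $(\mu_{1},\mu_{56})=-(\lambda_{7},\lambda_{7})=-\tfrac32$), but at the true eigenvalues $\{-q^{-\frac12},\pm q^{\frac12},q^{\frac32}\}$ one has $\Delta_{4}=q^{2}$ and $\Delta_{2}=-2q$, so $\Delta_{4}+q^{-3}\Delta_{2}+q^{-6}=(q-q^{-3})^{2}\neq0$. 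So the extremal pair yields no usable equation at all in the degree-four case.

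Your fallback candidates do not repair this. The pair $(2,3)/(3,2)$ is governed by the single simple root $\alpha_{6}$ with $(\mu_{2},\mu_{3})=(\mu_{1},\mu_{2})=\tfrac12$, so its block is literally identical to that of $(1,2)/(2,1)$ and reproduces the same two equations. The pair $(1,3)/(3,1)$ is \emph{not} governed by a single simple root: the weights of $f_{1}$ and $f_{3}$ differ by $\alpha_{6}+\alpha_{7}$, so the off-diagonal contribution comes from the non-simple quantum root vector $E_{\alpha_{6}+\alpha_{7}}$, whose matrix coefficient in $\mathfrak R$ you would have to compute separately; the computation is not parallel to the $(1,2)$ case. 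The missing idea, which is the one genuinely new ingredient of the paper's proof, is to find an \emph{interior} pair of basis vectors whose raising and lowering root sets are disjoint: from Figure $3$, $f_{5}$ is raised only by $\alpha_{2},\alpha_{3},\alpha_{1}+\alpha_{3}$ while $f_{11}$ is lowered only by $\alpha_{4},\alpha_{3}+\alpha_{4},\alpha_{2}+\alpha_{4}$, so no term $E_{\beta}\otimes F_{\beta}$ of $\mathfrak R$ contributes to rows $(5,11)$ and $(11,5)$, which are therefore supported on exactly the two columns $(5,11),(11,5)$ with off-diagonal entry $q^{\frac12}$. The $2\times2$ reduction is then exact for products of any length and yields the two clean equations $\Delta_{4}+q\Delta_{2}+q^{2}=0$ and $q^{\frac12}\Delta_{3}+q^{\frac32}\Delta_{1}=0$, which together with the two from $(1,2)/(2,1)$ determine all four $\Delta_{i}$. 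Without identifying such a pair, part (1) of your argument does not close.
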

\begin{proof} The proof is similar.
The decomposition of this $56$-dimensional representation is
$$
(0000001)\otimes(0000001)=(0000002)\oplus(0000010)\oplus(1000000)\oplus(0000000),
$$
namely,
$
V\otimes V=V_{1}\oplus V_{2}\oplus V_{3}\oplus V_{4},
$
where
$V_{i},i=1,2,3,4$ is the irreducible representation wit highest weight
$2\lambda_{7},\lambda_{6},\lambda_{1},0$ respectively.
This means matrix $PR_{VV}$ only has $4$ distinct eigenvalues,
denoted by
$x_{i},i=1,2,3,4$.
We set
$
\triangle_{1}=x_{1}+x_{2}+x_{3}+x_{4},
\triangle_{2}=x_{1}x_{2}+x_{1}x_{3}+x_{2}x_{3}+(x_{1}+x_{2}+x_{3})x_{4},
\triangle_{3}=x_{1}x_{2}x_{3}+x_{1}x_{2}x_{4}+x_{1}x_{3}x_{4}+x_{2}x_{3}x_{4},
\triangle_{4}=x_{1}x_{2}x_{3}x_{4},
$
and $\mathcal{N}=(PR_{VV}-x_{1}I)(PR_{VV}-x_{2}I)(PR_{VV}-x_{3}I)$ $(PR_{VV}-x_{4}I)$.

We find nonzero entries at rows $(12)$ and $(21)$ in matrix $PR_{VV}-x_{i}I$ are
$
(PR_{VV}-x_{i}I)^{1}_{1}{}^{2}_{2}=-x_{i},
(PR_{VV}-x_{i}I)^{1}_{2}{}^{2}_{1}=q^{\frac{1}{2}},
(PR_{VV}-x_{i}I)^{2}_{1}{}^{1}_{2}=q^{\frac{1}{2}},
(PR_{VV}-x_{i}I)^{2}_{2}{}^{1}_{1}=q^{\frac{1}{2}}(q-q^{-1})-x_{i}.
$
So we obtain the entries at row $(1,2)$ and columns $(1,2)$ and $(2,1)$ in matrix $\mathcal{N}$ are

$
\mathcal{N}^{1}_{1}{}^{2}_{2}=\Delta_{4}+q\Delta_{2}
-q^{\frac{3}{2}}(q-q^{-1})\Delta_{1}+q^{2}+q^{2}(q-q^{-1})^{2},
$

$
\mathcal{N}^{1}_{2}{}^{2}_{1}=-\Delta_{3}
+q^{\frac{1}{2}}(q-q^{-1})\Delta_{2}
-[q+q(q-q^{-1})^{2}]\Delta_{1}
+q^{\frac{3}{2}}(q-q^{-1})^{3}+2q^{\frac{3}{2}}(q-q^{-1}).
$

We will consider the entries at rows $(5,11)$ and $(11,5)$.
From the Figure $3$,
we find that the basis $f_{5}$ can be raised by the roots $\alpha_{2}$,$\alpha_{3}$ and $\alpha_{1}+\alpha_{3}$,
but the basis $f_{11}$ is lowered by the roots $\alpha_{4}$,$\alpha_{3}+\alpha_{4}$ and $\alpha_{2}+\alpha_{4}$.
This fact means that raising $f_{5}$ and lowering $f_{11}$ can't be done by the same roots,
so the only nonzero entry at row $(5,11)$ in matrix $R_{VV}$ is
$
R_{VV}{}^{5,}_{5,}{}^{11}_{11}=q^{\frac{1}{2}}.
$
We also obtain that the only nonzero entry at row $(11,5)$ in matrix $R_{VV}$ is
$
R_{VV}{}^{11,}_{11,}{}^{5}_{5}=q^{\frac{1}{2}}
$
by the same analysis.
Then the nonzero entries at rows $(5,11)$ and $(11,5)$ in matrix $PR_{VV}-x_{i}I$ are
$
(PR_{VV}-x_{i}I)^{5,11}_{5,11}=-x_{i},
(PR_{VV}-x_{i}I)^{5,11}_{11,5}=q^{\frac{1}{2}},
(PR_{VV}-x_{i}I)^{11,5}_{5,11}=q^{\frac{1}{2}},
(PR_{VV}-x_{i}I)^{11,5}_{11,5}=-x_{i}.
$
Then the entries at row $(5,11)$ and columns $(5,11)$ and $(11,5)$ in matrix $\mathcal{N}$ are
$
\mathcal{N}^{5,11}_{5,11}=\Delta_{4}+q\Delta_{2}+q^{2},
\mathcal{N}^{5,11}_{11,5}=-q^{\frac{1}{2}}\Delta_{3}-q^{\frac{3}{2}}\Delta_{1}.
$

These entries yield the following equations

$
\left\{
\begin{array}{l}
\Delta_{4}+q\Delta_{2}
-q^{\frac{3}{2}}(q-q^{-1})\Delta_{1}+q^{2}+q^{2}(q-q^{-1})^{2}=0,\\
-\Delta_{3}
+q^{\frac{1}{2}}(q-q^{-1})\Delta_{2}
-[q+q(q-q^{-1})^{2}]\Delta_{1}
+q^{\frac{3}{2}}(q-q^{-1})^{3}+2q^{\frac{3}{2}}(q-q^{-1})=0,\\
\Delta_{4}+q\Delta_{2}+q^{2}=0,\\
-q^{\frac{1}{2}}\Delta_{3}
-q^{\frac{3}{2}}\Delta_{1}=0.
\end{array}
\right.
$

Solving it, we get these eigenvalues are
$
-q^{-\frac{1}{2}},
\pm q^{\frac{1}{2}},
q^{\frac{3}{2}}.
$
Then the minimal polynomial of $PR_{VV}$ is
$(P{R}_{VV}+q^{-\frac{1}{2}}I)(P{R}_{VV}+q^{\frac{1}{2}}I)(P{R}_{VV}-q^{\frac{1}{2}}I)(P{R}_{VV}-q^{\frac{3}{2}}I)=0.$
\end{proof}

With these,
we have the following
\begin{theorem}  With the quantum normalization constant
$\lambda=q^{-\frac{1}{2}}$,
identify $e^{56}, f_{56}$ and $(m^{+})^{56}_{56}c^{-1}$ with the additional simple root vectors $E_{8}, F_{8}$, and the group-element $K_{8}$,
then the new quantum group
$U(V^{\vee}(R^{\prime},R_{21}^{-1}),\widetilde{U_{q}^{ext}(E_{7})},V(R^{\prime},R))=U_{q}(E_{8})$
with $K_{i}^{\frac{1}{2}}$ adjoined.
\end{theorem}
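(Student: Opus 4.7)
My approach will mirror the proofs for $U_{q}(E_{6})$ and $U_{q}(E_{7})$ in Sections 3.1 and 3.2: invoke the generalized double-bosonization Theorem \ref{cor1} and plug in the explicit data supplied by the preceding lemma. Under the identification $e^{56}\leftrightarrow E_{8}$, $f_{56}\leftrightarrow F_{8}$, $(m^{+})^{56}_{56}c^{-1}\leftrightarrow K_{8}$, the defining relation $[E_{8},F_{8}]=(K_{8}-K_{8}^{-1})/(q-q^{-1})$ together with the coproducts of $E_{8},F_{8},K_{8}$ drop out of Theorem \ref{cor1} immediately. What remains is to verify the remaining Drinfeld--Jimbo relations: the cross commutators $[E_{8},F_{j}]=0$ for $j\leq 7$, the Cartan commutations $E_{8}K_{i}$ and $K_{8}E_{i}$, and the two flavours of $q$-Serre relations involving $E_{8}$.

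For $[E_{8},F_{j}]=0$ with $j\leq 7$, each $F_{j}$ sits in a single off-diagonal entry of $m^{-}$, with the branching for $F_{1},F_{2}$ handled by $(m^{-})^{8}_{6}$ and $(m^{-})^{10}_{8}$, while the zero entries $(m^{-})^{7}_{6}=(m^{-})^{9}_{8}=0$ of the lemma kill any anomalous cross-terms. The cross relation of Theorem \ref{cor1} applied to $(m^{-})^{i+1}_{i}e^{56}$ collapses, via the upper-triangular structure of $R_{VV}$ recorded in Remark \ref{R}, to a single diagonal term whose scalar coincides with that obtained from $(m^{-})^{i+1}_{i+1}e^{56}$; dividing yields $F_{j}E_{8}=E_{8}F_{j}$. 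For the Cartan relations, the identity $e^{56}(m^{+})^{i}_{i}=\lambda R^{i}_{i}{}^{56}_{56}(m^{+})^{i}_{i}e^{56}$ together with the telescoping ratios of consecutive diagonal entries (which re-express each $K_{j}$) deliver $E_{8}K_{i}=K_{i}E_{8}$ for $i\leq 6$ and $E_{8}K_{7}=q^{-1}K_{7}E_{8}$. The mirror relations $E_{i}K_{8}=q^{-\delta_{i,7}}K_{8}E_{i}$ are then direct substitutions into $K_{8}=(m^{+})^{56}_{56}c^{-1}$ using the $E_{7}$ Cartan matrix, exactly as in Section 3.2.

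The main obstacle is the $q$-Serre side. The commuting pairs $E_{8}E_{i}=E_{i}E_{8}$ for $i=1,\ldots,6$ follow by the same mechanism, exploiting the zero entries $(m^{+})^{6}_{7}=(m^{+})^{7}_{8}=0$ to force the cross relation to collapse. For the nontrivial pair, Remark \ref{remm} forces the (unlisted) bottom subdiagonal entry to have the form $(m^{+})^{55}_{56}=-(q-q^{-1})E_{7}(m^{+})^{56}_{56}$, since $\alpha_{7}$ is the unique simple root connecting the lowest weight $-\lambda_{7}$ to its neighbour $-\lambda_{7}+\alpha_{7}$ in the $56$-dimensional representation. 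The cross relation then reads $e^{56}(m^{+})^{55}_{56}=\lambda(m^{+})^{55}_{56}e^{56}+\lambda(q-q^{-1})(m^{+})^{56}_{56}e^{55}$, from which I extract the key identity $e^{55}=q^{-1}E_{7}e^{56}-e^{56}E_{7}$. The braided-vector quadratic relation $e^{55}e^{56}=R'{}^{56,55}_{a,b}e^{a}e^{b}$, evaluated with $R'=-q^{-4}R(PR)^{2}+q^{-2}RPR+q^{-2}R$, then yields $e^{56}e^{55}=qe^{55}e^{56}$; substituting the formula for $e^{55}$ collapses to $(E_{8})^{2}E_{7}-[2]_{q}E_{8}E_{7}E_{8}+E_{7}(E_{8})^{2}=0$. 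For the mirror Serre identity, first establish $e^{55}E_{7}=qE_{7}e^{55}$ from $e^{55}(m^{+})^{55}_{56}=\lambda R^{55}_{55}{}^{55}_{55}(m^{+})^{55}_{56}e^{55}$, then substitute back.

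The genuine novelty relative to the $E_{6}$ and $E_{7}$ cases is that $PR_{VV}$ now has four distinct eigenvalues, so $R'$ is quadratic (rather than linear) in $PR$; verifying that the entries $R'{}^{56,55}_{55,56}$ and $R'{}^{56,55}_{56,55}$ evaluate to the precise coefficients needed for the Serre identity is the principal computational hurdle, and requires careful bookkeeping of the action of $R_{VV}$ on $f_{55}\otimes f_{56}$ and $f_{56}\otimes f_{55}$ using the weight-ordered basis. Once this is in hand, the $F$-side relations and the Serre identities for $F_{7},F_{8}$ follow by entirely symmetric arguments with $m^{-}$ and $f_{56}$ replacing $m^{+}$ and $e^{56}$. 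Assembling everything, the new quantum group is generated by $K_{i}^{\pm 1},E_{i},F_{i}$ for $1\leq i\leq 8$ satisfying the Drinfeld--Jimbo relations of the $E_{8}$ Cartan matrix, and the triangular decomposition supplied by the double-bosonization construction forces the isomorphism $U=U_{q}(E_{8})$ with $K_{i}^{\frac{1}{2}}$ adjoined.
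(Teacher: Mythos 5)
Your proposal follows the same overall route as the paper: invoke Theorem \ref{cor1}, read off $[E_{8},F_{8}]$ and the coproducts, obtain $[E_{8},F_{j}]=0$ and the Cartan commutations from the listed entries of $m^{\pm}$ (with the vanishing entries $(m^{+})^{6}_{7}=(m^{+})^{7}_{8}=0$ handling the branch node), and reduce the two nontrivial $q$-Serre relations to an auxiliary identity expressing $e^{55}$ as a $q$-commutator of $E_{7}$ and $e^{56}$, combined with the braided-vector relation $e^{55}e^{56}=q^{-1}e^{56}e^{55}$ coming from $R'$. The one genuine divergence is where you extract that identity: you work at the highest-weight corner, positing $(m^{+})^{55}_{56}=-(q-q^{-1})E_{7}(m^{+})^{56}_{56}$ and applying the cross relation to $e^{56}(m^{+})^{55}_{56}$, whereas the paper works at the opposite corner with the entry $(m^{+})^{1}_{2}=-(q-q^{-1})E_{7}(m^{+})^{2}_{2}$ that its Lemma actually records, using $R^{1}_{1}{}^{56}_{56}=q^{-1}$ and $R^{1,}_{2,}{}^{56}_{55}=q^{-1}(q-q^{-1})$. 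Your variant is the same technique (it is exactly what the paper does for $E_{6}$ via $(m^{+})^{15}_{16}$), but it obliges you to compute an $m^{+}$ entry and the $R$-matrix entries $R^{55,56}_{55,56}$, $R^{55,56}_{56,55}$ that the Lemma does not supply; note that the diagonal coefficient there is $\lambda R^{55,56}_{55,56}=q^{(\lambda_{7},\lambda_{7}-\alpha_{7})}=q^{\frac{1}{2}}=\lambda q$ rather than the bare $\lambda$ you wrote, so your displayed cross relation is off by a factor of $q$ in both terms. Fortunately this does not change the identity $e^{55}=q^{-1}E_{7}e^{56}-e^{56}E_{7}$ you arrive at, and that identity, together with $e^{55}E_{7}=qE_{7}e^{55}$, is in fact the internally consistent pair: it reproduces both Serre relations when combined with $e^{56}e^{55}=qe^{55}e^{56}$ and matches the pattern of the $E_{7}$ proof ($e^{26}=q^{-1}E_{6}e^{27}-e^{27}E_{6}$, $e^{26}E_{6}=qE_{6}e^{26}$), whereas the paper's displayed $e^{55}=qE_{7}e^{56}-e^{56}E_{7}$ appears to carry a typo. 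Everything else --- the commuting Serre pairs for $E_{1},\dots,E_{6}$, the telescoping of diagonal entries for the Cartan relations, and the symmetric treatment of the $F$-side --- coincides with the paper's argument.
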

\begin{proof}
The cross relation can be obtained easily in view of the relations in Theorem \ref{cor1},
so we only focus on the $q$-Serre relations between $e^{56}$ and $E_{i}$.
$E_{1},\, E_{2}$ appear in $(m^{+})^{6}_{8},\, (m^{+})^{5}_{7}$, respectively,
then according to Theorem \ref{cor1},
we obtain

$
\left.
\begin{array}{l}
(m^{+})^{6}_{8}=-(q-q^{-1})E_{1}(m^{+})^{8}_{8},\\
e^{56}(m^{+})^{6}_{8}=\lambda R^{6}_{6}{}^{56}_{56}(m^{+})^{6}_{8}e^{56}=\lambda (m^{+})^{6}_{8}e^{56},\\
e^{56}(m^{+})^{8}_{8}
=\lambda R^{8}_{8}{}^{56}_{56}(m^{+})^{6}_{8}e^{56}
=\lambda (m^{+})^{8}_{8}e^{56},
\end{array}
\right\}
\Longrightarrow
E_{8}E_{1}=E_{1}E_{8}.
$

$
\left.
\begin{array}{l}
(m^{+})^{5}_{7}=-(q-q^{-1})E_{2}(m^{+})^{7}_{7},\\
e^{56}(m^{+})^{5}_{7}=\lambda R^{5}_{5}{}^{56}_{56}(m^{+})^{5}_{7}e^{56}=\lambda (m^{+})^{5}_{7}e^{56},\\
e^{56}(m^{+})^{7}_{7}=\lambda R^{7}_{7}{}^{56}_{56}(m^{+})^{7}_{7}e^{56}=\lambda (m^{+})^{7}_{7}e^{56},
\end{array}
\right\}
\Longrightarrow
E_{8}E_{2}=E_{2}E_{8}.
$

We observed that other $E_{i}$'s locate in the minor entries $(m^{+})^{i}_{i+1}$ from the above Lemma,
then the following relations can be deduced from Theorem \ref{cor1}.

$
\left.
\begin{array}{l}
e^{56}(m^{+})^{1}_{2}=\lambda R^{1}_{1}{}^{56}_{56}(m^{+})^{1}_{2}e^{56}+\lambda R^{1}_{2}{}^{56}_{55}(m^{+})^{2}_{2}e^{55},\\
e^{56}(m^{+})^{i}_{i+1}=\lambda R^{i}_{i}{}^{56}_{56}(m^{+})^{i}_{i+1}e^{56}, 2\leq i\leq 5,\\
(m^{+})^{i}_{i+1}=-(q-q^{-1})E_{j}(m^{+})^{i+1}_{i+1},i+j=8,\\
e^{56}(m^{+})^{k}_{k}=\lambda R^{k}_{k}{}^{56}_{56}(m^{+})^{k}_{k}e^{56}, \forall k,\\
R^{2}_{2}{}^{55}_{55}=q^{-1},
R^{1,}_{2,}{}^{56}_{55}=q^{-1}(q-q^{-1}),\\
R^{1}_{1}{}^{56}_{56}=q^{-1},
R^{i}_{i}{}^{56}_{56}=1, 2\leq i\leq 8,
\end{array}
\right\}
\Longrightarrow
\left\{
\begin{array}{l}
E_{8}E_{j}=E_{j}E_{8},~3\leq j\leq 6,\\
e^{55}=qE_{7}e^{56}-e^{56}E_{7}.
\end{array}
\right.
$

According to
$
R^{\prime}{}^{56,}_{56,}{}^{55}_{55}=q^{-1}-q^{-3},
$
and
$
R^{\prime}{}^{56,}_{55,}{}^{55}_{56}=q^{-2},
$
then
$e^{55}e^{56}=R^{\prime}{}^{56}_{a}{}^{55}_{b}e^{a}e^{b}=(q^{-1}-q^{-3})e^{56}e^{55}+q^{-2}e^{55}e^{56}~\Longrightarrow~e^{55}e^{56}=q^{-1}e^{56}e^{55}$.
Combining with
$e^{55}=qE_{7}e^{56}-e^{56}E_{7}$,
we obtain
$$
(E_{8})^{2}E_{7}-(q+q^{-1})E_{8}E_{7}E_{8}+E_{7}(E_{8})^{2}=0.
$$
On the other hand,
we can get the relations between $e^{55}$ and $E_{7}$ by the following relations.

$
\left.
\begin{array}{l}
m^{+}{}^{1}_{2}=-(q-q^{-1})E_{7}m^{+}{}^{2}_{2},\\
e^{55}m^{+}{}^{1}_{2}=\lambda R^{1,}_{1,}{}^{55}_{55}m^{+}{}^{1}_{2}e^{55},\\
e^{55}m^{+}{}^{2}_{2}=\lambda R^{2,}_{2,}{}^{55}_{55}m^{+}{}^{2}_{2}e^{55},
\end{array}
\right\}
\Longrightarrow
e^{55}E_{7}=q^{-1}E_{7}e^{55}.
$

Combining with $e^{55}=qE_{7}e^{56}-e^{56}E_{7}$ again,
we obtain
$$
(E_{7})^{2}E_{8}-(q+q^{-1})E_{7}E_{8}E_{7}+E_{8}(E_{7})^{2}=0.
$$

With these relations,
we prove that the new quantum group is $U_{q}(E_{8})$.

This completes the proof.
\end{proof}
\newpage
\setlength{\unitlength}{1mm}
\begin{picture}(60,168)
\put(0,166)
{Figure $2$.\quad $U_{q}(E_{6})$'s $27$-dimensional minimal fundamental representation}
\put(20,0){$(\overline{1}00000)$}
\put(34,2){$f_{1}$}
\put(27,3){\line(0,1){7}}
\put(28,6){$\alpha_{1}$}

\put(20,10){$(10\overline{1}000)$}
\put(34,12){$f_{2}$}
\put(27,13){\line(0,1){7}}
\put(28,16){$\alpha_{3}$}

\put(20,20){$(001\overline{1}00)$}
\put(34,22){$f_{3}$}
\put(27,23){\line(0,1){7}}
\put(28,26){$\alpha_{4}$}

\put(20,30){$(0\overline{1}01\overline{1}0)$}
\put(34,32){$f_{4}$}
\put(8,39){\line(3,-1){15}}
\put(11,35){$\alpha_{5}$}
\put(31,34){\line(3,1){15}}
\put(38,34){$\alpha_{2}$}

\put(40,40){$(0100\overline{1}0)$}
\put(54,42){$f_{5}$}
\put(47,43){\line(0,1){7}}
\put(48,46){$\alpha_{5}$}

\put(40,50){$(010\overline{1}1\overline{1})$}
\put(54,52){$f_{8}$}
\put(47,53){\line(0,1){7}}
\put(48,56){$\alpha_{4}$}

\put(40,60){$(00\overline{1}10\overline{1})$}
\put(54,62){$f_{10}$}
\put(47,63){\line(0,1){7}}
\put(48,66){$\alpha_{3}$}

\put(40,70){$(\overline{1}0100\overline{1})$}
\put(54,72){$f_{12}$}
\put(47,73){\line(0,1){7}}
\put(48,76){$\alpha_{1}$}

\put(40,80){$(10000\overline{1})$}
\put(54,82){$f_{15}$}
\put(47,83){\line(0,1){7}}
\put(48,86){$\alpha_{6}$}

\put(40,90){$(1000\overline{1}1)$}
\put(54,92){$f_{17}$}
\put(47,93){\line(0,1){7}}
\put(48,96){$\alpha_{5}$}

\put(40,100){$(100\overline{1}10)$}
\put(54,102){$f_{19}$}
\put(47,103){\line(0,1){7}}
\put(48,106){$\alpha_{4}$}

\put(40,110){$(1\overline{1}~\overline{1}100)$}
\put(54,112){$f_{20}$}
\put(47,113){\line(0,1){7}}
\put(48,116){$\alpha_{3}$}

\put(40,120){$(0\overline{1}1000)$}
\put(54,122){$f_{22}$}

\put(0,40){$(0\overline{1}001\overline{1})$}
\put(14,42){$f_{6}$}
\put(7,43){\line(0,1){7}}
\put(3,46){$\alpha_{6}$}
\put(9,43){\line(4,1){30}}
\put(23,44){$\alpha_{2}$}

\put(0,50){$(0\overline{1}0001)$}
\put(14,52){$f_{7}$}
\put(7,53){\line(0,1){7}}
\put(3,56){$\alpha_{2}$}

\put(0,60){$(0\overline{1}001\overline{1})$}
\put(14,62){$f_{9}$}
\put(7,63){\line(0,1){7}}
\put(3,66){$\alpha_{4}$}
\put(9,59){\line(4,-1){30}}
\put(23,57){$\alpha_{6}$}

\put(0,70){$(0\overline{1}001\overline{1})$}
\put(14,72){$f_{11}$}
\put(7,73){\line(0,1){7}}
\put(3,76){$\alpha_{5}$}
\put(9,69){\line(4,-1){30}}
\put(23,67){$\alpha_{6}$}
\put(11,73){\line(2,1){13}}
\put(20,75){$\alpha_{3}$}

\put(0,80){$(0\overline{1}001\overline{1})$}
\put(14,82){$f_{13}$}
\put(7,83){\line(0,1){7}}
\put(3,86){$\alpha_{3}$}

\put(0,90){$(0\overline{1}001\overline{1})$}
\put(14,92){$f_{16}$}
\put(7,93){\line(0,1){7}}
\put(3,96){$\alpha_{4}$}
\put(11,89){\line(2,-1){13}}
\put(18,86){$\alpha_{5}$}
\put(9,93){\line(4,1){30}}
\put(23,94){$\alpha_{1}$}

\put(0,100){$(\overline{1}~\overline{1}1000)$}
\put(14,102){$f_{18}$}
\put(7,103){\line(0,1){7}}
\put(3,106){$\alpha_{2}$}
\put(9,103){\line(4,1){30}}
\put(23,104){$\alpha_{1}$}

\put(0,110){$(0\overline{1}001\overline{1})$}
\put(14,112){$f_{21}$}
\put(7,113){\line(0,1){7}}
\put(3,116){$\alpha_{1}$}

\put(0,120){$(0\overline{1}001\overline{1})$}
\put(14,122){$f_{23}$}
\put(15,121){\line(4,-1){30}}
\put(24,115){$\alpha_{2}$}
\put(8,122){\line(2,1){15}}
\put(13,127){$\alpha_{3}$}

\put(20,80){$(\overline{1}010\overline{1}1)$}
\put(34,82){$f_{14}$}
\put(34,83){\line(3,2){10}}
\put(36,88){$\alpha_{1}$}
\put(34,81){\line(3,-2){10}}
\put(36,76){$\alpha_{6}$}

\put(20,130){$(011\overline{1}00)$}
\put(34,132){$f_{24}$}
\put(36,130){\line(2,-1){15}}
\put(42,127){$\alpha_{2}$}
\put(27,133){\line(0,1){7}}
\put(27,135){$\alpha_{4}$}

\put(20,140){$(0001\overline{1}0)$}
\put(34,142){$f_{25}$}
\put(27,143){\line(0,1){7}}
\put(27,145){$\alpha_{5}$}

\put(20,150){$(00001\overline{1})$}
\put(34,152){$f_{26}$}
\put(27,153){\line(0,1){7}}
\put(27,155){$\alpha_{6}$}

\put(20,160){$(000001)$}
\put(34,162){$f_{27}$}
\end{picture}
\newpage
\setlength{\unitlength}{1mm}
\begin{picture}(90,200)
\put(0,197){Figure $3.$\quad $U_{q}(E_{7})$'s $56$-dimensional minimal fundamental representation}
\put(30,0){$(000000\overline{1})$}
\put(46,2){$f_{1}$}
\put(38,2.5){\line(0,1){4.5}}
\put(38.5,4){$\alpha_{7}$}
\put(30,7){($00000\overline{1}1)$}
\put(46,9){$f_{2}$}
\put(38,9){\line(0,1){5}}
\put(38.5,11){$\alpha_{6}$}
\put(30,14){$(0000\overline{1}10)$}
\put(46,16){$f_{3}$}
\put(38,16){\line(0,1){5}}
\put(38.5,18){$\alpha_{5}$}
\put(30,21){($000\overline{1}100)$}
\put(46,23){$f_{4}$}
\put(38,24){\line(0,1){4}}
\put(38.5,25){$\alpha_{4}$}
\put(30,28){$(0\overline{1}~\overline{1}1000)$}
\put(47,30){$f_{5}$}
\put(25,34){\line(3,-2){5.5}}
\put(27,32){$\alpha_{3}$}
\put(43,31){\line(3,2){5}}
\put(48,32.5){$\alpha_{2}$}
\put(15,35){$(\overline{1}~\overline{1}10000)$}
\put(31,37){$f_{6}$}
\put(33,38){\line(3,1){12}}
\put(37,38){$\alpha_{2}$}
\put(25,37){\line(0,1){5}}
\put(21,39){$\alpha_{1}$}
\put(45,35){$(01\overline{1}000)$}
\put(59,37){$f_{7}$}
\put(55,37){\line(0,1){5}}
\put(51,39){$\alpha_{3}$}
\put(15,42){$(1\overline{1}00000)$}
\put(31,44){$f_{8}$}
\put(25,44){\line(0,1){5}}
\put(21,46){$\alpha_{2}$}
\put(45,42){$(\overline{1}11\overline{1}000)$}
\put(59,44){$f_{9}$}
\put(55,44){\line(0,1){5}}
\put(51,46){$\alpha_{4}$}
\put(32,51){\line(3,-2){13}}
\put(40,46){$\alpha_{1}$}

\put(15,49){$(110\overline{1}000)$}
\put(31,51){$f_{10}$}
\put(25,51){\line(0,1){5}}
\put(21,54){$\alpha_{4}$}
\put(45,49){$(\overline{1}001\overline{1}00)$}
\put(59,51){$f_{11}$}
\put(55,51){\line(0,1){5}}
\put(51,54){$\alpha_{5}$}
\put(32,58){\line(3,-2){13}}
\put(40,53){$\alpha_{1}$}

\put(15,56){$(1001\overline{1}0\overline{1})$}
\put(31,58){$f_{12}$}
\put(8,62.5){\line(3,-2){7}}
\put(7,59){$\alpha_{3}$}
\put(22,58){\line(3,1){11}}
\put(21,60){$\alpha_{5}$}

\put(45,56){$(\overline{1}0001\overline{1}0)$}
\put(59,58){$f_{13}$}
\put(43,63){\line(3,-2){7}}
\put(41,59){$\alpha_{1}$}
\put(52,58){\line(2,1){9}}
\put(52,61){$\alpha_{6}$}

\put(0,63){$(00\overline{1}0\overline{1}00)$}
\put(16,65){$f_{14}$}
\put(8,65){\line(0,1){5}}
\put(9,67){$\alpha_{5}$}

\put(0,70){$(001\overline{1}01\overline{1}0)$}
\put(16,72){$f_{17}$}
\put(8,72){\line(0,1){5}}
\put(9,74){$\alpha_{4}$}
\put(9,72){\line(4,1){21}}
\put(26,75){$\alpha_{6}$}

\put(0,77){$(0\overline{1}010\overline{1}0)$}
\put(16,79){$f_{20}$}
\put(8,79){\line(0,1){5}}
\put(9,81){$\alpha_{6}$}
\put(9,79){\line(4,1){21}}
\put(26,82){$\alpha_{2}$}

\put(0,84){$(0\overline{1}01\overline{1}1\overline{1})$}
\put(16,86){$f_{23}$}
\put(8,86){\line(0,1){5}}
\put(9,88){$\alpha_{5}$}
\put(9,86){\line(4,1){21}}
\put(26,89){$\alpha_{7}$}
\put(18,85){\line(6,1){42}}
\put(19,83){$\alpha_{2}$}

\put(0,91){$(0\overline{1}0010\overline{1})$}
\put(16,93){$f_{26}$}
\put(17,94){\line(3,1){13}}
\put(19,97){$\alpha_{2}$}

\put(0,98){$(0100\overline{1}01)$}
\put(16,100){$f_{31}$}
\put(8,100){\line(0,1){5}}
\put(4,102){$\alpha_{5}$}

\put(0,105){$(010\overline{1}1\overline{1}1)$}
\put(16,107){$f_{34}$}
\put(8,108){\line(0,1){5}}
\put(4,109){$\alpha_{6}$}
\put(10,108){\line(4,1){21}}
\put(23,109){$\alpha_{4}$}

\put(0,112){$(010\overline{1}010)$}
\put(16,114){$f_{37}$}
\put(8,115){\line(0,1){5}}
\put(4,117){$\alpha_{4}$}

\put(0,119){$(00\overline{1}1\overline{1}10)$}
\put(16,121){$f_{40}$}
\put(8,122){\line(0,1){5}}
\put(4,124){$\alpha_{5}$}
\put(10,122){\line(4,1){21}}
\put(26,124){$\alpha_{3}$}

\put(0,126){$(00\overline{1}0100)$}
\put(16,128){$f_{43}$}
\put(10,129){\line(3,1){9}}
\put(8,131){$\alpha_{3}$}

\put(30,63){$(10\overline{1}01\overline{1}0)$}
\put(46,65){$f_{15}$}
\put(30,66){\line(-3,1){13}}
\put(26,68){$\alpha_{3}$}
\put(40,65){\line(0,1){5}}
\put(41,67){$\alpha_{6}$}

\put(30,70){$(10\overline{1}001\overline{1})$}
\put(46,72){$f_{18}$}
\put(47,72){\line(3,1){13}}
\put(50,75){$\alpha_{7}$}
\put(40,72){\line(0,1){5}}
\put(41,74){$\alpha_{3}$}

\put(30,77){$(001\overline{1}01\overline{1})$}
\put(46,79){$f_{21}$}
\put(38,79){\line(-3,1){21}}
\put(35,81){$\alpha_{4}$}
\put(42,80){\line(3,1){19}}
\put(50,81){$\alpha_{7}$}

\put(30,84){$(0\overline{1}000\overline{1}0)$}
\put(46,86){$f_{24}$}
\put(46,87){\line(4,1){14}}
\put(53,87){$\alpha_{2}$}

\put(30,91){$(0\overline{1}01\overline{1}01)$}
\put(46,93){$f_{27}$}
\put(38,94){\line(-4,1){21}}
\put(34,95){$\alpha_{2}$}
\put(42,94){\line(3,1){19}}
\put(50,95){$\alpha_{5}$}

\put(30,98){$(0100\overline{1}10\overline{1})$}
\put(46,100){$f_{29}$}
\put(38,101){\line(-4,1){21}}
\put(25,104){$\alpha_{7}$}
\put(42,101){\line(3,1){19}}
\put(50,102){$\alpha_{4}$}

\put(30,105){$(0\overline{1}00010)$}
\put(46,107){$f_{33}$}
\put(38,108){\line(-4,1){21}}
\put(32,109){$\alpha_{2}$}

\put(30,112){$(00\overline{1}10\overline{1}1)$}
\put(46,114){$f_{36}$}
\put(35,115){\line(-4,1){16}}
\put(28,117){$\alpha_{6}$}
\put(40,115){\line(0,1){5}}
\put(41,117){$\alpha_{3}$}

\put(30,119){$(\overline{1}0100\overline{1}1)$}
\put(46,121){$f_{39}$}
\put(47,122){\line(3,1){13}}
\put(50,125){$\alpha_{1}$}
\put(40,122){\line(0,1){5}}
\put(41,124){$\alpha_{6}$}

\put(30,126){$(\overline{1}010\overline{1}10)$}
\put(46,128){$f_{42}$}
\put(38,129){\line(-4,1){12}}
\put(32,131){$\alpha_{5}$}
\put(42,129){\line(3,1){9}}
\put(44,131){$\alpha_{1}$}

\put(60,63){$(\overline{1}00001\overline{1})$}
\put(76,65){$f_{16}$}
\put(70,65){\line(-4,1){21}}
\put(65,67){$\alpha_{1}$}
\put(72,65){\line(0,1){5}}
\put(73,67){$\alpha_{7}$}

\put(60,70){$(\overline{1}000001)$}
\put(76,72){$f_{19}$}
\put(72,72){\line(0,1){5}}
\put(73,74){$\alpha_{1}$}

\put(60,77){$(10\overline{1}0010)$}
\put(76,79){$f_{22}$}
\put(72,80){\line(0,1){4}}
\put(73,81){$\alpha_{3}$}

\put(60,84){$(001\overline{1}001)$}
\put(76,86){$f_{25}$}
\put(70,87){\line(-4,1){21}}
\put(65,89){$\alpha_{4}$}

\put(60,91){$(0100\overline{1}1\overline{1})$}
\put(76,93){$f_{28}$}
\put(70,94){\line(-4,1){21}}
\put(65,96){$\alpha_{5}$}

\put(60,98){$(0\overline{1}001\overline{1}1)$}
\put(76,100){$f_{30}$}
\put(70,101){\line(-4,1){21}}
\put(65,103){$\alpha_{6}$}
\put(60,99){\line(-6,1){40}}
\put(40,103){$\alpha_{2}$}

\put(60,105){$(00\overline{1}100\overline{1})$}
\put(76,107){$f_{32}$}
\put(70,108){\line(-4,1){21}}
\put(65,109){$\alpha_{7}$}
\put(72,107){\line(0,1){5}}
\put(73,109){$\alpha_{3}$}

\put(60,112){$(\overline{1}01000\overline{1})$}
\put(76,114){$f_{35}$}
\put(70,115){\line(-4,1){21}}
\put(65,117){$\alpha_{7}$}
\put(72,114){\line(0,1){5}}
\put(73,116){$\alpha_{1}$}

\put(60,119){$(100000\overline{1})$}
\put(76,121){$f_{38}$}
\put(72,122){\line(0,1){5}}
\put(73,123){$\alpha_{7}$}

\put(60,126){$(10000\overline{1}1)$}
\put(76,128){$f_{41}$}
\put(70,129){\line(-4,1){15}}
\put(65,131){$\alpha_{6}$}

\put(15,133){$(\overline{1}01\overline{1}100)$}
\put(31,135){$f_{45}$}
\put(32,136){\line(3,1){13}}
\put(37,136){$\alpha_{1}$}
\put(25,135){\line(0,1){5}}
\put(20,138){$\alpha_{4}$}

\put(15,140){$(\overline{1}~\overline{1}01000)$}
\put(31,142){$f_{47}$}
\put(32,143){\line(3,1){13}}
\put(37,143){$\alpha_{1}$}
\put(25,142){\line(0,1){5}}
\put(20,145){$\alpha_{2}$}

\put(15,147){$(\overline{1}100000)$}
\put(31,149){$f_{49}$}
\put(25,149){\line(0,1){5}}
\put(20,152){$\alpha_{1}$}

\put(15,154){$(110000\overline{1})$}
\put(31,156){$f_{51}$}
\put(22,157){\line(3,1){9}}
\put(24,159){$\alpha_{3}$}

\put(45,133){$(1000\overline{1}10)$}
\put(61,135){$f_{44}$}
\put(53,135){\line(0,1){5}}
\put(54,137){$\alpha_{5}$}

\put(45,140){$(100\overline{1}100)$}
\put(61,142){$f_{46}$}
\put(53,142){\line(0,1){5}}
\put(54,144){$\alpha_{4}$}

\put(45,147){$(1\overline{1}0100\overline{1})$}
\put(61,149){$f_{48}$}
\put(32,155){\line(4,-1){16}}
\put(40,150){$\alpha_{2}$}
\put(53,149){\line(0,1){5}}
\put(54,151){$\alpha_{3}$}

\put(45,154){$(0\overline{1}00001)$}
\put(61,156){$f_{50}$}
\put(48,161){\line(3,-1){11}}
\put(54,160){$\alpha_{2}$}

\put(30,161){$(010\overline{1}001)$}
\put(46,163){$f_{52}$}
\put(38,164){\line(0,1){4}}
\put(39,165){$\alpha_{4}$}

\put(30,168){$(0001\overline{1}00)$}
\put(46,170){$f_{53}$}
\put(38,171){\line(0,1){4}}
\put(39,172){$\alpha_{5}$}

\put(30,175){$(00001\overline{1}0)$}
\put(46,177){$f_{54}$}
\put(38,178){\line(0,1){4}}
\put(39,179){$\alpha_{6}$}

\put(30,182){$(000001\overline{1})$}
\put(46,184){$f_{55}$}
\put(38,185){\line(0,1){4}}
\put(39,186){$\alpha_{7}$}

\put(30,189){$(0000001)$}
\put(46,191){$f_{56}$}
\end{picture}

\section{Majid's double-bosonization and Rosso's quantum shuffle}
Up to now,
we have obtained the inductive constructions of $U_q(\mathfrak g)$'s for all finite-dimensional complex simple Lie algebras $\mathfrak g$,
which can be expressed in a unified form by the following
\begin{proposition}
Let $T_{V}$ be a $p$-dimensional irreducible representation of $U_q(\mathfrak g)$
with highest weight $-\mu$,
$(a_{ij})_{n\times n}$ the Cartan matrix of $\mathfrak g$,
$\nu$ the weight of central element $c^{-1}$.
Then the resulted quantum group
$U(V^{\vee}(R',R_{21}^{-1}),\widetilde{U_q^{ext}(\mathfrak g)},V(R^{\prime},R))$
is of higher-one rank, whose Cartan matrix is obtained from $(a_{ij})_{n\times n}$ by adding a row and a column with:
$a_{i,n+1}=\frac{2(\alpha_{i},\mu)}{(\alpha_{i},\alpha_{i})},a_{n+1,i}=\frac{2(\mu,\alpha_{i})}{(\mu,\mu)+(\nu,\nu)},$
and $\nu$ is orthogonal to $\mu$ and $\alpha_{i},i=1,\cdots,n$.
\end{proposition}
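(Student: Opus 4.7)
The plan is to extend the identifications made in Theorems 3.1--3.3 to arbitrary $T_V$ and to read off the new Cartan matrix entries from the Cartan part of the cross relations in Theorem \ref{cor1}. Set $E_{n+1} := e^p$, $F_{n+1} := f_p$, and $K_{n+1} := (m^+)^p_p c^{-1}$, where $v_p$ is the highest weight vector of $T_V$, of weight $-\mu$. Since the proposition only concerns the Cartan matrix, the task reduces to extracting the exponents of $q$ in the two commutation relations $E_i K_{n+1}$ vs.\ $K_{n+1} E_i$ and $E_{n+1} K_i$ vs.\ $K_i E_{n+1}$.

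The key preliminary step is to locate the group-like $(m^+)^p_p$ in the Cartan weight lattice. By Remark \ref{remm}, evaluating the Cartan factor $\exp(h\sum_{i,j} B_{ij} H_i \otimes H_j)$ of $\mathcal R$ on $v_p$ produces $l^+_{pp}$ as a monomial in the $K_j$'s whose associated Cartan weight equals the weight $-\mu$ of $v_p$, and applying $S$ places $(m^+)^p_p = S(l^+_{pp})$ at weight $\mu$. Because $c$ is central with $[c,m^\pm]=0$ and $c^{-1}$ contributes the additional orthogonal direction $\nu$ inherent in the centrally extended pair $(\widetilde{U_q^{ext}(\mathfrak g)},\widetilde{H_{\lambda R}})$, the group-like $K_{n+1}$ has weight $\alpha_{n+1}:=\mu+\nu$ in the extended weight lattice. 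Hence $(\alpha_{n+1},\alpha_i)=(\mu,\alpha_i)$ and $(\alpha_{n+1},\alpha_{n+1})=(\mu,\mu)+(\nu,\nu)$ by the stated orthogonality of $\nu$.

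The two new Cartan entries now drop out. For $a_{i,n+1}$: commuting $E_i$ past the weight-$\mu$ group-like $K_{n+1}$ through the ordinary $U_q^{ext}(\mathfrak g)$ relations yields $E_i K_{n+1} = q^{(\alpha_i,\mu)} K_{n+1} E_i$; comparing with $E_i K_{n+1}=q_i^{a_{i,n+1}}K_{n+1}E_i$ gives $a_{i,n+1}=2(\alpha_i,\mu)/(\alpha_i,\alpha_i)$. For $a_{n+1,i}$: specialize the cross relation $e^p(m^+)^j_k=R_{VV}{}^{jp}_{ab}(m^+)^a_k e^b$ of Theorem \ref{cor1} to $k=j$. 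The triangularity of $R_{VV}$ combined with the highest-weight property of $v_p$ forces $R_{VV}{}^{jp}_{ab}=0$ unless $(a,b)=(j,p)$, leaving only the $B_{VV}$-diagonal term $R_{VV}{}^{jp}_{jp}=q^{-(\omega_j,\mu)}$. Assembling $K_i$ as the appropriate monomial in the $(m^+)^j_j$'s -- whose combined Cartan weight equals $\alpha_i$ -- then gives $e^p K_i=q^{(\mu,\alpha_i)}K_i e^p$, whence $a_{n+1,i}=2(\mu,\alpha_i)/((\mu,\mu)+(\nu,\nu))$.

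The main obstacle is precisely the triangularity argument: starting from the explicit form $\mathfrak R=\sum\prod q^{\cdots}E_{\beta_j}^{r_j}\otimes F_{\beta_j}^{r_j}$ in Remark \ref{R}, one needs the weight-matching condition $\omega_b-\sum r_j\beta_j=-\mu$ together with $\omega_b\le-\mu$ in the highest-weight module $V$ to conclude $r_j=0$ and $b=p$, thereby eliminating all off-diagonal $(m^+)^a_j$-contributions with $a<j$ from the cross relation. Once this is in place, the whole argument reduces to inner-product arithmetic on the weight lattice of $\mathfrak g$ enlarged by the orthogonal direction $\nu$. The full identification of $U$ with a higher-rank $U_q(\mathfrak g')$, as performed in Theorems 3.1--3.3, would additionally require the Serre relations controlled by the minimal polynomial of $PR_{VV}$ through the auxiliary matrix $R'$, but those relations are not part of the present statement and so need not be revisited here.
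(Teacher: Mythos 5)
Your identification $K_{n+1}=(m^+)^p_p\,c^{-1}=K_{\mu}c^{-1}$ and the extraction of the two Cartan entries from the cross relations of Theorem \ref{cor1} match what the paper does (and your triangularity argument for isolating the diagonal term $R_{VV}{}^{jp}_{jp}=q^{-(\omega_j,\mu)}$ is a reasonable filling-in of a step the paper leaves to the earlier concrete theorems). But there is a genuine gap: you invoke ``the stated orthogonality of $\nu$'' as if it were a hypothesis, whereas in the proposition the clause ``$\nu$ is orthogonal to $\mu$ and $\alpha_i$'' is part of the \emph{conclusion}, and it is exactly what the paper's proof spends its effort establishing. Both of your displayed formulas depend on it: without $(\nu,\alpha_i)=0$ the identification $E_iK_{n+1}=q^{(\alpha_i,\mu)}K_{n+1}E_i$ is inconsistent with reading $\alpha_{n+1}=\mu+\nu$ as the new simple root, and without $(\nu,\mu)=0$ you cannot write $(\alpha_{n+1},\alpha_{n+1})=(\mu,\mu)+(\nu,\nu)$ in the denominator of $a_{n+1,i}$. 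Saying that $c^{-1}$ ``contributes the additional orthogonal direction inherent in the centrally extended pair'' is circular; the central extension by $k[c,c^{-1}]$ only supplies an extra grading direction, not its orthogonality.

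The paper closes this gap as follows. Each simple root vector $E_j$ sits in a minor entry $(m^+)^i_{i+1}=a_iE_j(m^+)^{i+1}_{i+1}$, and the cross relations express $e^{i-1}$ as a $q$-commutator of $e^{i}$ with $E_j$, so $\mathrm{weight}(e^{i-1})=\mathrm{weight}(e^{i})+\alpha_j$; on the other hand $e^{k}\lhd c=\lambda e^{k}$ holds \emph{uniformly in} $k$, so the $\nu$-component of the weight is constant along this chain, which forces $(\nu,\alpha_j)=0$ for every $j$. The remaining orthogonality $(\nu,\mu)=0$ is then verified from the explicit form of $\nu$ read off from $K_{\mu+\nu}$ in each inductive construction. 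You would need to add an argument of this kind (or an equivalent one) before your inner-product arithmetic on the extended weight lattice is justified.
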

\begin{proof}
Combining with the inductive constructions obtained in \cite{HH1,HH2} and the results in section 3 of this paper,
we know that the new additional group-like element $K_{n+1}$ is $(m^{+})^{p}_{p}c^{-1}$.
From the expression of $(m^{+})^{p}_{p}$ in each case,
we observe that the corresponding weight of $(m^{+})^{p}_{p}$ is $\mu$,
namely $(m^{+})^{p}_{p}=K_{\mu}$,
so $K_{n+1}=K_{\mu+\nu}$.
Every simple root vector $E_{j}$ usually locates in the minor entries $(m^{+})^{i}_{i+1}$,
where $(m^{+})^{i}_{i+1}=a_{i}E_{j}(m^{+})^{i+1}_{i+1}$,
$a_{i}\in k[q,q^{-1}]$.
According to the cross relations in Theorem \ref{cor1},
we obtain the following relations between $e^{i}$ and $e^{i-1}$.
$$
\left\{
\begin{array}{l}
e^{i}(m^{+})^{i}_{i+1}=\lambda R^{i}_{a}{}^{i}_{b}(m^{+})^{a}_{i+1}e^{b}
=\lambda R^{i}_{i}{}^{i}_{i}(m^{+})^{i}_{i+1}e^{i}+\lambda R^{i}_{i+1}{}^{i}_{i-1}(m^{+})^{i+1}_{i+1}e^{i-1},\\
e^{i}(m^{+})^{i+1}_{i+1}=\lambda R^{i+1}_{i+1}{}^{i}_{i}(m^{+})^{i+1}_{i+1}e^{i},\\
e^{i-1}(m^{+})^{i+1}_{i+1}=\lambda R^{i+1}_{i+1}{}^{i-1}_{i-1}(m^{+})^{i+1}_{i+1}e^{i-1}.
\end{array}
\right.
$$
Combining with $(m^{+})^{i}_{i+1}=a_{i}E_{j}(m^{+})^{i+1}_{i+1}$,
we get
$e^{i-1}=a_{i}\frac{R^{i+1}_{i+1}{}^{i-1}_{i-1}}{R^{i}_{i+1}{}^{i}_{i-1}}
(e^{i}E_{j}-\frac{R^{i}_{i}{}^{i}_{i}}{R^{i+1}_{i+1}{}^{i}_{i}}E_{j}e^{i})$.
Then $\text{weight}(e^{i-1})=\text{weight}(e^{i})+\alpha_{j}$,
combining with $e^{k}\cdot c^{-1}=\lambda^{-1}e^{k}$ for any $k$,
so $(\nu,\alpha_{j})=0$.
On the other hand,
we know that the explicit form of the additional $K_{\mu+\nu}$ in each inductive construction,
so we can obtain the specific form of $\nu$,
and observe that $\nu$ is orthogonal to $\mu$.
\end{proof}
With these explicit inductive constructions,
we know that both double bosonization \cite{majid1} and quantum shuffle in \cite{rosso}
have the same application for the inductive constructions of $U_q(\mathfrak g)$'s for the finite-dimensional complex simple Lie algebras.
We will give the specific form of weight $\mu$ and $\nu$,
and the Dynkin diagram in each case,
which makes the readers understand intuitively.
The imaginary line and the filled circle mean that the original Dynkin diagram extends to the new added simple root
with an arrow pointing to the shorter of the two roots in each extended Dynkin diagram.

$(1)$ The $A_{n}$ series.

Take $\mathfrak g=\mathfrak{sl}_{n}$ and $V$ the fundamental representation with
$\mu=-\lambda_{n-1}$,
where $\lambda_{n-1}=\sum\limits_{i=1}^{n-1}\frac{i}{n}\alpha_{i}=\frac{1}{n}\sum\limits_{i=1}^{n-1}\varepsilon_{i}-\frac{n-1}{n}\varepsilon_{n}$.
$\nu=\frac{1}{n}\sum\limits_{i=1}^{n}\varepsilon_{i}-\varepsilon_{n+1}$,
then we get $\mathfrak{sl}_{n+1}$ (\cite{HH1}).

\setlength{\unitlength}{1mm}
\begin{picture}(98,10)
\put(6,4){\circle{1}}
\put(6.5,4){\line(1,0){12}}
\put(3,0){$\varepsilon_{1}-\varepsilon_{2}$}
\put(5,6){$\alpha_{1}$}
\put(19,4){\circle{1}}
\put(19.5,4){\line(1,0){12}}
\put(18,6){$\alpha_{2}$}
\put(15,0){$\varepsilon_{2}-\varepsilon_{3}$}
\multiput(32.5,4)(3,0){4}{\line(1,0){2}}
\put(45.5,4){\line(1,0){12}}
\put(58,4){\circle{1}}
\put(58.5,4){\line(1,0){12}}
\put(57,6){$\alpha_{n-2}$}
\put(43,0){$\varepsilon_{n-2}-\varepsilon_{n-1}$}
\put(71,4){\circle{1}}
\multiput(71.5,4)(3,0){6}{\line(1,0){2}}
\put(70,6){$\alpha_{n-1}$}
\put(65,0){$\varepsilon_{n-1}-\varepsilon_{n}$}
\put(89,4){\circle*{1}}
\put(89,0){$\varepsilon_{n}-\varepsilon_{n+1}$}
\end{picture}

$(2)$ The $B_{n}$ series.

$B_{2}$ can be constructed from $A_{1}$ in \cite{HH1}.
Then starting from $\mathfrak g=\mathfrak{so}_{2n+1}$ and $V$  is the fundamental representation with $\mu=-\lambda_{n}$,
where $\lambda_{n}=\sum\limits_{i=1}^{n}\alpha_{i}=\varepsilon_{n}$.
$\nu=\varepsilon_{n+1}$,
then we get $\mathfrak{so}_{2n+3}$  (\cite{HH1}).

\setlength{\unitlength}{1mm}
\begin{picture}(98,6)
\put(6,4){\circle{1}}
\put(6.5,4.2){\line(1,0){12}}
\put(6.5,3.8){\line(1,0){12}}
\put(12,3){$<$}
\put(5,6){$\alpha_{1}$}
\put(6,0){$\varepsilon_{1}$}
\put(19,4){\circle{1}}
\put(19.5,4){\line(1,0){12}}
\put(18,6){$\alpha_{2}$}
\put(13,0){$\varepsilon_{2}-\varepsilon_{1}$}
\multiput(32.5,4)(3,0){4}{\line(1,0){2}}
\put(45.5,4){\line(1,0){12}}
\put(58,4){\circle{1}}
\put(58.5,4){\line(1,0){12}}
\put(57,6){$\alpha_{n-1}$}
\put(43,0){$\varepsilon_{n-1}-\varepsilon_{n-2}$}
\put(71,4){\circle{1}}
\multiput(71.5,4)(3,0){6}{\line(1,0){2}}
\put(70,6){$\alpha_{n}$}
\put(65,0){$\varepsilon_{n}-\varepsilon_{n-1}$}
\put(89,4){\circle*{1}}
\put(85,0){$\varepsilon_{n+1}-\varepsilon_{n}$}
\end{picture}

$(3)$ The $C_{n}$ series.

$C_{3}$ can be constructed from $A_{2}$ in \cite{HH1}.
Then starting from $\mathfrak g=\mathfrak{sp}_{2n}$ and $V$ is the fundamental representation with $\mu=-\lambda_{n}$,
where $\lambda_{n}=\frac{1}{2}+\sum\limits_{i=2}^{n}\alpha_{i}=\varepsilon_{n}$.
$\nu=\varepsilon_{n+1}$,
then we get $\mathfrak{sp}_{2n+2}$ (\cite{HH1}).

\setlength{\unitlength}{1mm}
\begin{picture}(98,6)
\put(6,4){\circle{1}}
\put(6.5,4.2){\line(1,0){12}}
\put(6.5,3.8){\line(1,0){12}}
\put(12,3){$>$}
\put(5,6){$\alpha_{1}$}
\put(6,0){$2\varepsilon_{1}$}
\put(19,4){\circle{1}}
\put(19.5,4){\line(1,0){12}}
\put(18,6){$\alpha_{2}$}
\put(13,0){$\varepsilon_{2}-\varepsilon_{1}$}
\multiput(32.5,4)(3,0){4}{\line(1,0){2}}
\put(45.5,4){\line(1,0){12}}
\put(58,4){\circle{1}}
\put(58.5,4){\line(1,0){12}}
\put(57,6){$\alpha_{n-1}$}
\put(43,0){$\varepsilon_{n-1}-\varepsilon_{n-2}$}
\put(71,4){\circle{1}}
\multiput(71.5,4)(3,0){6}{\line(1,0){2}}
\put(70,6){$\alpha_{n}$}
\put(65,0){$\varepsilon_{n}-\varepsilon_{n-1}$}
\put(89,4){\circle*{1}}
\put(85,0){$\varepsilon_{n+1}-\varepsilon_{n}$}
\end{picture}

$(4)$ The $D_{n}$ series.

$D_{4}$ can be constructed from $A_{3}$ in \cite{HH1}.
Then starting from $\mathfrak g=\mathfrak{so}_{2n}$ and $V$ is the fundamental representation with $\mu=-\lambda_{n}$,
where $\lambda_{n}=\frac{1}{2}(\alpha_{1}+\alpha_{2})+\sum\limits_{i=3}^{n}\alpha_{i}=\varepsilon_{n}$.
$\nu=\varepsilon_{n+1}$,
then we get $\mathfrak{so}_{2n+2}$ (\cite{HH1}).

\setlength{\unitlength}{1mm}
\begin{picture}(98,16)
\put(6,12){\circle{1}}
\put(18.5,8){\line(-3,1){12}}
\put(6,4){\circle{1}}
\put(6.5,4){\line(3,1){12}}
\put(0,13){$\alpha_{1}=\varepsilon_{2}-\varepsilon_{1}$}
\put(0,0){$\alpha_{2}=\varepsilon_{2}+\varepsilon_{1}$}
\put(19,8){\circle{1}}
\put(19.5,8){\line(1,0){12}}
\put(18,9){$\alpha_{3}$}
\put(13,4){$\varepsilon_{3}-\varepsilon_{2}$}
\multiput(32.5,8)(3,0){4}{\line(1,0){2}}
\put(45.5,8){\line(1,0){12}}
\put(58,8){\circle{1}}
\put(58.5,8){\line(1,0){12}}
\put(57,9){$\alpha_{n-1}$}
\put(43,4){$\varepsilon_{n-1}-\varepsilon_{n-2}$}
\put(71,8){\circle{1}}
\multiput(71.5,8)(3,0){6}{\line(1,0){2}}
\put(70,9){$\alpha_{n}$}
\put(65,4){$\varepsilon_{n}-\varepsilon_{n-1}$}
\put(89,8){\circle*{1}}
\put(85,4){$\varepsilon_{n+1}-\varepsilon_{n}$}
\end{picture}

$(5)$ The case of $E_{6}$.

Take $\mathfrak g=\mathfrak{so}_{10}$ and $V$ the $16$-dimensional half-spin representation with $\mu=-\lambda_{5}$,
where $\lambda_{5}=\frac{1}{2}\alpha_{1}+\alpha_{2}+\frac{3}{2}\alpha_{3}+\frac{3}{4}\alpha_{4}+\frac{5}{4}\alpha_{5}
=\frac{1}{2}\sum\limits_{i=1}^{5}\varepsilon_{i}$.
$\nu=\frac{1}{2}(\varepsilon_{7}+\varepsilon_{8}-\varepsilon_{6})$,
then we get $E_{6}$.

\setlength{\unitlength}{1mm}
\begin{picture}(60,23)
\put(6,4){\circle{1}}
\put(6.5,4){\line(1,0){12}}
\put(5,6){$\alpha_{1}$}
\put(0,0){$\varepsilon_{1}-\varepsilon_{2}$}

\put(19,4){\circle{1}}
\put(19.5,4){\line(1,0){12}}
\put(18,6){$\alpha_{2}$}
\put(13,0){$\varepsilon_{2}-\varepsilon_{3}$}

\put(32,4){\circle{1}}
\put(32.5,4){\line(1,0){12}}
\put(28,6){$\alpha_{3}$}
\put(26,0){$\varepsilon_{3}-\varepsilon_{4}$}

\put(32,4.5){\line(0,0){12}}
\put(32,17){\circle{1}}
\put(26,19.5){$\alpha_{4}=\varepsilon_{4}-\varepsilon_{5}$}

\put(45,4){\circle{1}}
\multiput(45.5,4)(3,0){4}{\line(1,0){2}}
\put(44,6){$\alpha_{5}$}
\put(39,0){$\varepsilon_{4}+\varepsilon_{5}$}

\put(58,4){\circle*{1}}
\put(53,0){$\frac{1}{2}(\varepsilon_{7}+\varepsilon_{8}-(\varepsilon_{1}+\cdots+\varepsilon_{6}))$}
\end{picture}

$(6)$ The case of $E_{7}$.

Take $\mathfrak g=E_{6}$ and $V$ the $27$-dimensional fundamental representation with $\mu=-\lambda_{6}$,
$\lambda_{6}=\frac{2}{3}\alpha_{1}+\alpha_{2}+\frac{4}{3}\alpha_{3}+2\alpha_{4}+\frac{5}{3}\alpha_{5}+\frac{4}{3}\alpha_{6}
=\frac{1}{3}\varepsilon_{8}-\frac{1}{3}(\varepsilon_{6}+\varepsilon_{7})+\varepsilon_{5},
$
and $\nu=\frac{1}{3}\varepsilon_{8}+\frac{2}{3}\varepsilon_{6}-\frac{1}{3}\varepsilon_{7}$,
then we get $E_{7}$.

\setlength{\unitlength}{1mm}
\begin{picture}(73,25)
\put(6,4){\circle*{1}}
\multiput(6.5,4)(3,0){4}{\line(1,0){2}}
\put(0,0){$\varepsilon_{6}-\varepsilon_{5}$}

\put(19,4){\circle{1}}
\put(19.5,4){\line(1,0){12}}
\put(18,6){$\alpha_{6}$}
\put(13,0){$\varepsilon_{5}-\varepsilon_{4}$}

\put(32,4){\circle{1}}
\put(32.5,4){\line(1,0){12}}
\put(31,6){$\alpha_{5}$}
\put(26,0){$\varepsilon_{4}-\varepsilon_{3}$}

\put(45,4){\circle{1}}
\put(45.5,4){\line(1,0){12}}
\put(40,6){$\alpha_{4}$}
\put(39,0){$\varepsilon_{3}-\varepsilon_{2}$}

\put(45,4.5){\line(0,1){12}}
\put(45,17){\circle{1}}
\put(39,19.5){$\alpha_{2}=\varepsilon_{2}+\varepsilon_{1}$}

\put(58,4){\circle{1}}
\put(58.5,4){\line(1,0){12}}
\put(56,6){$\alpha_{3}$}
\put(52,0){$\varepsilon_{2}-\varepsilon_{1}$}

\put(71,4){\circle{1}}
\put(70,6){$\alpha_{1}$}
\put(66,0){$\frac{1}{2}(\varepsilon_{1}+\varepsilon_{8}-(\varepsilon_{2}+\cdots+\varepsilon_{7}))$}
\end{picture}

$(7)$ The case of $E_{8}$.

Take $\mathfrak g=E_{7}$ and $V$ the $56$-dimensional fundamental representation with $\mu=-\lambda_{7}$,
$\lambda_{7}=\alpha_{1}+\frac{3}{2}\alpha_{2}+2\alpha_{3}+3\alpha_{4}+\frac{5}{2}\alpha_{5}+2\alpha_{6}+\frac{3}{2}\alpha_{7}
=\frac{1}{2}\varepsilon_{8}-\frac{1}{2}\varepsilon_{7}+\varepsilon_{6},
$
and $\nu=\frac{1}{2}(\varepsilon_{8}+\varepsilon_{7})$,
then we get $E_{8}$.

\setlength{\unitlength}{1mm}
\begin{picture}(88,25)
\put(6,4){\circle*{1}}
\multiput(6.5,4)(3,0){4}{\line(1,0){2}}
\put(0,0){$\varepsilon_{7}-\varepsilon_{6}$}

\put(19,4){\circle{1}}
\put(19.5,4){\line(1,0){12}}
\put(18,6){$\alpha_{7}$}
\put(13,0){$\varepsilon_{6}-\varepsilon_{5}$}

\put(32,4){\circle{1}}
\put(32.5,4){\line(1,0){12}}
\put(31,6){$\alpha_{6}$}
\put(26,0){$\varepsilon_{5}-\varepsilon_{4}$}

\put(45,4){\circle{1}}
\put(45.5,4){\line(1,0){12}}
\put(44,6){$\alpha_{5}$}
\put(39,0){$\varepsilon_{4}-\varepsilon_{3}$}

\put(58,4){\circle{1}}
\put(58.5,4){\line(1,0){12}}
\put(54,6){$\alpha_{4}$}
\put(52,0){$\varepsilon_{3}-\varepsilon_{2}$}

\put(58,4.5){\line(0,1){12}}
\put(58,17){\circle{1}}
\put(52,19.5){$\alpha_{2}=\varepsilon_{2}+\varepsilon_{1}$}

\put(71,4){\circle{1}}
\put(71.5,4){\line(1,0){12}}
\put(70,6){$\alpha_{3}$}
\put(65,0){$\varepsilon_{2}-\varepsilon_{1}$}

\put(84,4){\circle{1}}
\put(83,6){$\alpha_{1}$}
\put(78,0){$\frac{1}{2}(\varepsilon_{1}+\varepsilon_{8}-(\varepsilon_{2}+\cdots+\varepsilon_{7}))$}
\end{picture}

$(8)$ The case of $F_{4}$.

Take $\mathfrak g=\mathfrak {so}_{7}$ and $V$ the $8$-dimensional spin representation with $\mu=-\lambda_{3}$,
$\lambda_{3}=\frac{1}{2}\alpha_{1}+\alpha_{2}+\frac{3}{2}\alpha_{3}=\frac{1}{2}\varepsilon_{1}+\frac{1}{2}\varepsilon_{2}+\frac{1}{2}\varepsilon_{3}$,
and $\nu=\frac{1}{2}\varepsilon_{4}$,
then we get $F_{4}$  (see \cite{HH2}).

\setlength{\unitlength}{1mm}
\begin{picture}(88,8)
\put(66,4){\circle*{1}}
\multiput(53.5,4)(3,0){4}{\line(1,0){2}}
\put(26,6){$\alpha_{1}$}
\put(24,0){$\varepsilon_{3}-\varepsilon_{2}$}
\put(60,0){$\frac{1}{2}(\varepsilon_{4}-\varepsilon_{1}-\varepsilon_{2}-\varepsilon_{3})$}

\put(40,4){\circle{1}}
\put(40.5,3.8){\line(1,0){12}}
\put(40.5,4.2){\line(1,0){12}}
\put(46.5,3){$>$}
\put(52,6){$\alpha_{3}$}
\put(52,0){$\varepsilon_{1}$}

\put(53,4){\circle{1}}
\put(27.5,4){\line(1,0){12}}
\put(38,6){$\alpha_{2}$}
\put(40,0){$\varepsilon_{2}-\varepsilon_{1}$}

\put(27,4){\circle{1}}

\end{picture}

$(9)$ The case of $G_{2}$.

Take $\mathfrak g=\mathfrak {sl}_{2}$ and $V$ the $4$-dimensional spin $\frac{3}{2}$ representation with  $\mu=-3\lambda_{1}$,
$\lambda_{1}=\frac{1}{2}\alpha_{1}=\frac{1}{2}(\varepsilon_{1}-\varepsilon_{2})$,
and $\nu=-\frac{1}{2}(\varepsilon_{1}+\varepsilon_{2})+\varepsilon_{3}$,
then we get $G_{2}$ (see \cite{HH2}).

\setlength{\unitlength}{1mm}
\begin{picture}(88,6)
\put(8,4){\circle{1}}
\multiput(8.5,4)(3,0){7}{\line(1,0){2}}
\multiput(8.5,3.8)(3,0){7}{\line(1,0){2}}
\multiput(8.5,4.2)(3,0){7}{\line(1,0){2}}
\put(18,3){$<$}
\put(0,0){$\varepsilon_{1}-\varepsilon_{2}$}

\put(29,4){\circle*{1}}
\put(25,0){$-2\varepsilon_{1}+\varepsilon_{2}+\varepsilon_{3}$}
\end{picture}

%\end{format}%正文格式结束
%%%%%%%%%%%%%%%%%%%%%%%%%%%%%
\newpage

\end{document}